\newcommand{\s}{{\sigma}}
\newcommand{\R}{\mathbb R}
\newcommand{\Div}{\hbox{div\,}}
\DeclareMathOperator{\sectional}{\kappa}
\theoremstyle{plain}
\newtheorem{theorem}{Theorem}[section]
\newtheorem{proposition}[theorem]{Proposition}
\newtheorem{lemma}[theorem]{Lemma}
\theoremstyle{definition}
\newtheorem{definition}[theorem]{Definition}
\newtheorem{remark}[theorem]{Remark}
\theoremstyle{remark}
\numberwithin{equation}{section}
\author{Giulia Meglioli}
\address{\hbox{\parbox{5.7in}{\medskip \noindent{Giulia Meglioli, \\Fakult\"at f\"ur Mathematik, \\Universit\"at Bielefeld, \\33501, Bielefeld, Germany \\ [3pt] \emph{E-mail address: }{\tt gmeglioli@math.uni-bielefeld.de}}}}}
\author{Francescantonio Oliva}
\address{\hbox{\parbox{5.7in}{\medskip \noindent{Francescantonio Oliva, \\Dipartimento di Scienze di Base e Applicate per l'Ingegneria, \\"Sapienza" Universit\'a di Roma, \\Via Scarpa 16, 00161 Rome, Italy \\ [3pt] \emph{E-mail address: }{\tt francescantonio.oliva@uniroma1.it}}}}}
\author{Francesco Petitta}
\address{\hbox{\parbox{5.7in}{\medskip \noindent{Francesco Petitta, \\Dipartimento di Scienze di Base e Applicate per l'Ingegneria, \\"Sapienza" Universit\'a di Roma, \\Via Scarpa 16, 00161 Rome, Italy \\ [3pt] \emph{E-mail address: }{\tt francesco.petitta@uniroma1.it}}}}}
\begin{document}

\title[]{Global existence for a Leibenson type equation with reaction on Riemannian manifolds}

\keywords{Nonlinear reaction-diﬀusion equation, Riemannian manifolds, Leibenson equation, smoothing estimates, global existence, Sobolev inequality, Poincaré inequality}

\subjclass[2020]{Primary: 35K57. Secondary: 35B44, 58J35, 35K59, 35K65, 35R01}

\maketitle

\begin{abstract}
We show a global existence result for a doubly nonlinear porous medium type equation of the form  
$$u_t = \Delta_p u^m +\, u^q$$ 
on a complete and non-compact Riemannian manifold $M$ of infinite volume. Here, for $1<p<N$, we assume   $m(p-1)\ge1$, $m>1$ and $q>m(p-1)$. In particular, under the assumptions that $M$ supports the Sobolev inequality, we prove that a solution for such a problem exists globally in time provided $q>m(p-1)+\frac pN$ and the initial datum is small enough; namely, we establish an explicit bound on the $L^\infty$ norm of the solution at all positive times, in terms of the $L^1$ norm of the data. Under the additional assumption that a Poincar\'e-type inequality also holds in $M$, we can establish the same result in the larger interval, i.e. $q>m(p-1)$. This result has no Euclidean counterpart, as it differs entirely from the case of a bounded Euclidean domain due to the fact that $M$ is non-compact and has infinite measure.
\end{abstract}

\section{Introduction}
Let $ (M,g)$ be a complete non-compact Riemannian manifold of infinite volume and dimension $N>1$. We investigate existence of non-negative global in time solutions to the quasilinear parabolic problem
\begin{equation}\label{problema}
\begin{cases}
u_t = \Delta_p u^m +\, u^q & \text{in}\,\, M \times (0,T) \\
u =u_0 &\text{in}\,\, M\times \{0\}\,,
\end{cases}
\end{equation}
where $T\in(0,+\infty]$, $\Delta_p$ is the $p$-Laplacian defined as 
$$\Delta_p v:=\Div\left(\left|\nabla v\right|^{p-2}\nabla v\right),$$ and 
\begin{equation}\label{hp}
q>m(p-1)\ge1,\quad\quad 1<p<N,  \quad\quad m>1.
\end{equation}
Here and throughout the rest of the paper $\Div$ and $\nabla$ stands, respectively,  for  the divergence and the gradient defined on the  manifold $M$ with respect to the metric $g$. We will also always assume   that the initial datum $u_0$ is non-negative.
We consider the usual Lebesgue space of measurable functions $f$ such that $|f|^q$ is integrable w.r.t. the Riemannian measure $\mu$ induced by $g$, i.e., 
$$L^q(M)=\left\{v:M\to\R\,\, \text{measurable}\,\,  ,   \,\, \|v\|_{L^q(M)}:=\left(\int_{M}|v|^q\,d\mu\right)^{1/q}<+\infty\right\}.$$ 
Furthermore, we make the following assumption on $M$:  we assume the validity of the Sobolev inequality, i.e.
\begin{equation}\label{S}
\|v\|_{L^{p^*}(M)} \le \frac{1}{\mathcal{S}_p} \|\nabla v\|_{L^p(M)}\quad \text{for any}\,\,\, v\in C_c^{\infty}(M),
\end{equation}
where $\mathcal{S}_p>0$ is a numerical constants and $p^*:=\frac{pN}{N-p}$.   The validity of \eqref{S} puts constraints on $M$, though  it is e.g. well known that, on \it Cartan-Hadamard manifolds\rm, namely complete and simply connected manifolds that have everywhere non-positive sectional curvature, \eqref{S} always holds. This covers a wide class of manifolds, including  the hyperbolic space $\mathbb{H}^N$ and, of course, also the Euclidean space $\R^N$.
In some specific cases, we will furthermore assume the validity of the Poincar\'{e} inequality on $M$:
\begin{equation}\label{P}
 \|v\|_{L^p(M)} \le \frac{1}{\mathcal{C}_p} \|\nabla v\|_{L^p(M)} \quad \text{for any}\,\,\, v\in C_c^{\infty}(M),
\end{equation}
for some numerical constant $\mathcal{C}_p >0$. This assumption certainly   puts an extra constraint on the manifold: \eqref{P} holds when $M$ is Cartan-Hadamard and, besides, $\sectional\le -c<0$ everywhere, see e.g. \cite{Grig, Grig3}. Here, $\sectional$ denotes the sectional curvature of $M$. Thus, in particular, note that both \eqref{S} and \eqref{P} hold when $M$ is Cartan-Hadamard and $\sectional\le -c<0$ everywhere, a case that differs significantly from the Euclidean space, where the sectional curvature is zero. As already mentioned, this scenario encompasses a broad class of manifolds, including the fundamental example of the hyperbolic space $\mathbb{H}^n$, namely that Cartan-Hadamard manifold with constant sectional curvatures $-1$ (or similarly, the case where $\sectional=-c$ everywhere, for some fixed $c>0$).
\medskip

\subsection{Related results in the literature}
The equation in problem \eqref{problema}, when the power-like source term is neglected, i.e.,
\begin{equation}\label{leibenson}
u_t = \Delta_p u^m,
\end{equation}
is referred to as the {\it Leibenson} equation or a {\it doubly non-linear parabolic} equation and it is used to describe filtration of
turbulent compressible fluids through a porous medium. Clearly, the case when \(m=1\) corresponds to the evolutionary \(p\)-Laplace equation \(u_t = \Delta_p u\) whereas, when \(p=2\), \eqref{leibenson} becomes the porous medium equation, \(u_t=\Delta(u^m)\). In the special case \(m=1\) and \(p=2\), it reduces to the heat equation. Equation \eqref{leibenson} posed in \(\mathbb{R}^N\) admits spherically symmetric self-similar solutions known as {\it Barenblatt solutions}, see \cite{Bar}. If \(m(p-1)>1\), the Barenblatt solution \(u(x,t)\) has compact support for all \(t>0\), meaning that \(u\) has finite propagation speed; this range is also called {\it slow diffusion} regime. On the other hand, if \(m(p-1)\le1\), then \(u(x,t)\) is positive for all \(x\in\mathbb{R}^N\) and \(t>0\), implying infinite propagation speed (i.e. {\it fast diffusion}). The case \(m(p-1) =1\) is also called the pseudo-linear regime. 

\noindent Problem \eqref{leibenson} has been extensively studied also on Riemannian manifolds, see \cite{BG, GrigSur1, GrigSur2, GrigSur3, GM2, GMPrm, Pu1}. Specifically, in \cite{GrigSur1}, the authors prove that, similarly to the Euclidean setting, solutions to \eqref{leibenson} on arbitrary Riemannian manifolds exhibit a finite rate of propagation for \(m(p-1)>1\) (see also \cite{GrigSur2} for related results). This behavior is not totally surprising, in fact, it is well-known that negative curvature accelerates diffusions. Already from solutions to the heat equation, it has been showed that, the standard deviation of a Brownian particle acts linearly in time on the hyperbolic space $\mathbb H^N$, whereas, in the Euclidean setting, it is proportional to the square root of the time variable, see e.g. \cite{Grig3}.
\medskip

Let us now briefly comment on the reaction term \(u^q\), with \(q>1\). In the Euclidean setting (\(M = \mathbb{R}^N\)), if \(m = 1\) and \(p=2\) in \eqref{problema}, it is well-known that if \(q \leq 1 + \frac{2}{N}\), the solution will blow up in finite time for any \(u_0 \neq 0\). However, if \(q > 1 + \frac{2}{N}\) and the initial datum \(u_0\) is bounded and sufficiently small (in a suitable Lebesgue norm), global existence holds. This occurrence is known as the \textit{Fujita phenomenon} (see \cite{F, H}). For further results related to this problem we refer the reader to  \cite{CFG, DL, FI, KST, L, Poho, Q, S, Vaz1, W, Y} and references therein.

\noindent A similar result also holds if \(m > 1\) and \(p=2\), or \(m = 1\) and \(p>1\). More precisely, the results in \cite{SGKM} show that the solution to \eqref{problema} with \(p=2\) and $M=\R^N$ blows up for any \(q \leq m + \frac{2}{N}\) when \(u_0 \neq 0\), and that, on the other hand, a global solution exists if \(q > m + \frac{2}{N}\) with  a compactly supported, sufficiently small initial datum. For related results, see also \cite{Meg, MP1, MP2, MP3, MQV}. Similarly, the case of problem \eqref{problema} when \(M = \mathbb{R}^N\) and \(m = 1\), i.e. when the diffusion is driven by the $p$-Laplacian, has attracted a lot of attention, see e.g. \cite{Gal1, Gal2, Gal3, Mitidieri, Mitidieri2} and references therein. In particular, in \cite{Mitidieri2}, non-existence of nontrivial weak solutions is proved for any \(\max\{1,p-1\} < q \le (p-1) + \frac{p}{N}\) and \(p > \frac{2N}{N+1}\), while a global-in-time solution exists for any \(q > (p-1) + \frac{p}{N}\) provided that the initial datum is compactly supported and sufficiently small. The general equation in problem \eqref{problema} on the Euclidean space has been addressed in \cite{MT1, MTS, MTS2}.

\noindent On Riemannian manifolds, the situation can be significantly different from the Euclidean setting, especially in the case of negative curvature. In fact, when dealing with the case of the \(N\)-dimensional hyperbolic space, \(M = \mathbb{H}^N\), it is known that, problem \eqref{problema} with \(p=2\) and \(m=1\), for all \(q>1\) and a sufficiently small non-negative datum, admits a global-in-time solution, see \cite{BPT, Pu3, Sun1, WY, WY2, Z}, meaning that the Fujita phenomenon does not appear. Furthermore, in \cite{GMPFuj}, the authors provide a class of reaction terms $f(u)$ which replace $u^q$ in \eqref{problema} and restore the Fujita phenomenon on complete, non-compact, stochastically complete Riemannian manifold with \(\lambda_1(M)>0\); here \(\lambda_1(M):=\inf \operatorname{spec}(-\Delta)\), is the infimum of the $L^2$-spectrum of the  Laplace-Beltrami operator on $M$.

The porous medium equation with a power-like reaction on complete, non-compact, Riemannian manifolds of infinite volume, i.e., problem \eqref{problema} with \(p=2\), has been investigated in \cite{GMeP1, GMeP2, GMuP}. The authors in \cite{GMeP1, GMeP2} show global existence of solutions for small enough initial datum when \(q > m + \frac{2}{N}\), under the only assumption that the Sobolev inequality holds on \(M\) (this also covers the case of \(\mathbb{R}^N\)). Furthermore, they show that, on a smaller class of manifolds, i.e., when both the Sobolev and the Poincaré inequalities hold, global existence in $(0,T)$ for all $T>0$ follows either if \(q < m\) or \(q > m\). Finally, in \cite{GMuP}, the authors show that, on Cartan-Hadamard manifolds which satisfy proper bounds on the sectional and Ricci curvatures, for all $q<m$ it is possible to construct a subsolution to problem \eqref{problema} with \(p=2\) which blows-up as $t\to+\infty$. 

Concerning problem \eqref{problema} with \(m = 1\), we refer the reader to \cite{GMPplap, MMP, MeMoPu} and references therein. In particular, in \cite{GMPplap}, the author shows that when the Sobolev inequality \eqref{S} holds, \(q > (p-1) + \frac{p}{N}\) and \(\frac{2N}{N+1} < p < N\), a global solution exists for small enough datum. They also provide a smoothing estimate of the type \(L^1 \to L^\infty\). Moreover, if both the Sobolev and the Poincaré inequalities hold, global existence of solutions is proved in the larger interval \(q > p-1\) and \(\frac{2N}{N+1} < p < N\), provided that \(u_0\) is sufficiently small.

\subsection{Outline of our results}

In this paper, we prove the following results. Assume that the bounds in \eqref{hp} hold.
\begin{itemize}
\item[$(a)$] Assume that the Sobolev inequality \eqref{S} holds and besides that 
\begin{equation}\label{opt}
q>m(p-1)+\frac pN.\end{equation}
If $u_0\in {L}^{s}(M)\cap {L}^1(M)$ is sufficiently small, with $s>[q-m(p-1)]\frac Np$, then a global solution exists, see Theorem \ref{teo1}.
\item[$(b)$] Now suppose that both the Sobolev inequality \eqref{S} and the Poincar\'e inequality \eqref{P} hold without any further restriction on the exponent $q$.
If $u_0\in { L}^{s}(M)\cap { L}^{q\frac Np}(M)$ is sufficiently small, with $s>\max\left\{(q-m(p-1))\frac Np, 1\right\}$, then a global solution exists, see Theorem \ref{teo2}.
\end{itemize}
Observe that the situation in $(b)$ has of course no Euclidean analogue, as it is completely different from the case of a bounded Euclidean domain since $M$ is non-compact and of infinite measure. 

The result we prove in Theorem \ref{teo1} is sharp in the sense that below the exponent given in \eqref{opt} a non-existence result holds for every initial data, see \cite{Sun2}. More precisely, the Riemannian manifolds considered in Theorem \ref{teo1} are those where only inequality \eqref{S} holds, therefore also the euclidean space $\R^N$ is considered. For this reason, one can not expect global existence in the range of $q$ below \eqref{opt}. Non-existence of global solutions to more general non-linearities in the diffusion has been recently showed in \cite{mont}.

\subsection{A short overview of the applied techniques}
In order to prove the latter we adapt the methods exploited in  \cite{MT1}, \cite{GMeP1} and \cite{GMPplap}.

We first  obtain Caccioppoli-type estimates, which provide crucial a priori energy bounds for the solutions. Then, we apply these estimates in a Moser iteration method to derive a global $L^\infty$- 
  estimate, ensuring uniform control over the solution.   Finally, the existence of a solution is obtained through standard a priori estimates, combined with a passage to the limit in suitable approximating problems. By carefully constructing these approximations and ensuring their convergence, we rigorously justify the existence of a weak solution to problem \eqref{problema}.

\medskip 
The paper is organized as follows. The main results are stated in Section \ref{statement} together with the definition of solution to problem \eqref{problema}. Section \ref{aux} is dedicated to proving the Caccioppoli-type a priori estimates. Some of these estimates rely on the validity of the Sobolev inequality, while others additionally require the Poincar\'e inequality to hold. Furthermore, this section develops the Moser-type iteration scheme, which is used to establish the uniform $L^\infty$-
  bound. Finally, in Sections \ref{dim1} and \ref{dim2}, Theorems \ref{teo1} and \ref{teo2} are proved, respectively. 

\section{Statement of main results}\label{statement}
Solutions to \eqref{problema} will be meant in the weak sense according to the following definition:

\begin{definition}\label{def21}
Let $m,p,q$ be as in \eqref{hp} and let $0\leq u_0\in{ L}^{1}_{\textrm{loc}}(M)$. We say that a non-negative function $u$ is a  weak solution to problem \eqref{problema} in the time interval $[0,T)$ if
$$
u^m\in { L}^p((0,T);{ W}^{1,p}_{\rm loc}(M)),\quad u\in { L}^{q}_{\rm loc}(M\times(0,T))  
$$
and for any $\varphi \in C_c^{\infty}(M\times[0,T])$ such that $\varphi(x,T)=0$ for any $x\in M$, $u$ satisfies the equality:
\begin{equation*} 
\begin{aligned}
-\int_0^T\int_{M} \,u\,\varphi_t\,d\mu\,dt \,+&\int_0^T\int_{M} |\nabla u^m|^{p-2}\left \langle \nabla u^m, \nabla \varphi \right \rangle\,d\mu\,dt\,= \int_0^T\int_{M} \,u^{q}\,\varphi\,d\mu\,dt \\
& +\int_{M} \,u_0(x)\,\varphi(x,0)\,d\mu.
\end{aligned}
\end{equation*}
\end{definition}
\smallskip

As we already mentioned, in the first theorem, we only assume the validity of the Sobolev inequality \eqref{S}, and we  derive an $L^{\infty}$ estimate for the solution to \eqref{problema} under suitable assumptions on the initial datum, in the range $q>m(p-1)+\frac{p}{N}$. We define
\begin{equation}\label{q0}
q_0:=[q-m(p-1)]\frac{N}{p}\,.
\end{equation}
Note that $q_0>1$ whenever $q>m(p-1)+\frac pN$.
\begin{theorem}\label{teo1}
Let $M$ be a complete, non-compact Riemannian manifold of infinite volume such that  the Sobolev inequality \eqref{S} holds. 
Let also $m,p$ be as in \eqref{hp}, and assume that
 \begin{equation*}
 q>m(p-1)+\frac{p}{N},
 \end{equation*} 
 and $s>q_0$, where $q_0$ is defined in \eqref{q0}.
Moreover, assume that the initial datum $u_0\ge0$ satisfies
\begin{equation}\label{epsilon0}
u_0\in{ L}^{s}(M)\cap L^{1+m}(M)\cap L^1(M), \quad \|u_0\|_{ L^{s}(M)}\,<\,\varepsilon_0,\quad  \|u_0\|_{ L^{1}(M)}<\,\varepsilon_0\,,
\end{equation}
where $\varepsilon_0=\varepsilon_0(q,m,p,N, \mathcal S_{p})>0$ is sufficiently small. Then problem \eqref{problema} admits a solution for any $T>0$,  in the sense of Definition \ref{def21}. Furthermore, for any $\tau>0,$ the solution $u$ satisfies $u\in L^{\infty}(M\times(\tau,+\infty))$, and there exists a constant $c>0$ such that, for all $t>0$,
\begin{equation}\label{eq21tot}
\|u(t)\|_{L^{\infty}(M)}\le c\, \left(\frac{\|u_0\|_{L^{1}(M)}^p}{t^N}\right)^{\frac{1}{N[m(p-1)-1]+p}}.
\end{equation}
\end{theorem}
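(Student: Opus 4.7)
\medskip
\noindent\textbf{Sketch of proof strategy.} The plan is to follow the three-step scheme announced in the introduction. First, I would build smooth non-negative approximate solutions $u_n$ on an exhausting family of geodesic balls $\{B_n\}$ with homogeneous Dirichlet boundary data, regularised initial datum $u_{0,n}$ and truncated reaction $\min\{u_n,n\}^q$; standard theory for doubly nonlinear parabolic equations on bounded domains yields their existence. Second, derive uniform Caccioppoli-type $L^s$-bounds by exploiting the Sobolev inequality \eqref{S} together with the smallness hypothesis \eqref{epsilon0}. Third, iterate these bounds by the Moser scheme to obtain the $L^\infty$-smoothing \eqref{eq21tot}, and finally pass to the limit $n\to\infty$ through a monotonicity argument that identifies the $p$-Laplacian flux.

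The analytical heart of the argument is the Caccioppoli step. Testing the equation formally with $u_n^{s-1}$ and using
\begin{equation*}
\int_{B_n} u_n^{s-1}\Delta_p u_n^m\,d\mu \,=\, -\,c_{m,p,s}\int_{B_n}\bigl|\nabla u_n^{\alpha}\bigr|^p\,d\mu, \qquad \alpha\,:=\,\frac{m(p-1)+s-1}{p},
\end{equation*}
together with \eqref{S}, gives the differential inequality
\begin{equation*}
\frac{1}{s}\frac{d}{dt}\|u_n(t)\|_{L^s(M)}^s \,+\, c\,\mathcal{S}_p^p\,\|u_n(t)\|_{L^{\alpha p^*}(M)}^{\alpha p} \,\le\, \|u_n(t)\|_{L^{q+s-1}(M)}^{q+s-1}.
\end{equation*}
The right-hand side is then estimated by H\"older interpolation between $L^s$ and $L^{\alpha p^*}$: a short computation shows that the exponent balance needed for this interpolation is solvable precisely when $s\ge q_0$, with the estimate taking the form $C\|u_n\|_{L^s}^{\gamma}\|u_n\|_{L^{\alpha p^*}}^{\alpha p}$. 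Under the smallness \eqref{epsilon0} the prefactor $\|u_n\|_{L^s}^{\gamma}$ stays strictly smaller than $c\,\mathcal{S}_p^p$, hence it can be absorbed on the left, and a standard continuity/bootstrap argument yields $\|u_n(t)\|_{L^s(M)}\le\|u_0\|_{L^s(M)}$ uniformly in $n$ and $t>0$.

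With the $L^s$-norm under control, the next step is to iterate the same energy inequality along a sequence of exponents $s_k\uparrow\infty$, tracking the time dependence as in \cite{GMeP1, GMPplap}. Combining the resulting recursive bound with the a priori estimate $\|u_n(t)\|_{L^1(M)}\le C\,\|u_0\|_{L^1(M)}$ (which, for data small in the sense of the second smallness in \eqref{epsilon0}, follows from the $L^1$-energy inequality controlled via \eqref{S} as above) leads to the Barenblatt-type smoothing
\begin{equation*}
\|u_n(t)\|_{L^\infty(M)} \,\le\, c\left(\frac{\|u_0\|_{L^1(M)}^p}{t^N}\right)^{1/(N[m(p-1)-1]+p)},
\end{equation*}
uniformly in $n$; at this stage the hypothesis $q>m(p-1)+p/N$ enters to guarantee that the Moser recursion produces a convergent product with the correct Barenblatt exponent. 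The uniform bounds then allow extraction of a limit $u$ via Aubin--Lions, and the passage to the limit in the nonlinear flux $|\nabla u_n^m|^{p-2}\nabla u_n^m$ is handled by a Minty-type monotonicity argument on truncations of $u_n^m-u^m$, in the spirit of \cite{MT1}; the bound \eqref{eq21tot} is then inherited by $u$.

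I expect the main obstacle to be the Caccioppoli step: the exponent-balancing of $\|u_n\|_{L^{q+s-1}}^{q+s-1}$ against the Sobolev-controlled dissipation is what pins down both the Fujita-type threshold $q>m(p-1)+p/N$ and the critical smallness exponent $q_0$, so the algebra must be tracked carefully and $\varepsilon_0$ has to be chosen as an explicit function of $\mathcal{S}_p$, $m$, $p$, $q$ and $N$ in order for the absorption to close uniformly in time. A secondary technical difficulty, at the level of the limit passage, is the strong convergence of $\nabla u_n^m$ required to identify the nonlinear flux in Definition \ref{def21}.
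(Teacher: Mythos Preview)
Your Caccioppoli step for fixed $s$ is correct and matches the paper's Lemma~\ref{lemma41}: testing with $u^{s-1}$ and splitting the reaction via H\"older with exponents $N/p$ and $N/(N-p)$ gives exactly
\[
\int u^{q+s-1}\le \|u\|_{L^{q_0}}^{\,q-m(p-1)}\,\|u\|_{L^{\alpha p^*}}^{\,\alpha p},
\]
so that smallness of $\|u\|_{L^{q_0}}$ lets you absorb and obtain $\|u(t)\|_{L^s}\le\|u_0\|_{L^s}$.

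The gap is in the next step. Iterating ``the same energy inequality along a sequence $s_k\uparrow\infty$'' does not close: the dissipation constant is
\[
c_{m,p,s}=m^{p-1}(s-1)\Bigl(\tfrac{p}{m(p-1)+s-1}\Bigr)^{p}\sim \tfrac{m^{p-1}p^{p}}{s^{p-1}}\quad(s\to\infty),
\]
so the absorption condition $\|u\|_{L^{q_0}}^{q-m(p-1)}<c_{m,p,s}\,\mathcal S_p^{p}$ forces $\varepsilon_0\to0$ along the iteration and no fixed smallness can reach $L^\infty$. Relatedly, your claimed $L^1$ control ``via \eqref{S}'' does not work as stated: after testing with (an approximation of) the constant $1$, the diffusion contributes nothing and one is left with $\tfrac{d}{dt}\|u\|_{L^1}\le\int u^q$, which Sobolev alone cannot bound; some $L^\infty$ information is needed, making the argument circular.

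The paper resolves both issues by replacing your Moser-on-exponents step with a De~Giorgi iteration on level sets (Lemmas~\ref{lemma1}--\ref{lemma3}) that produces, for any $r\ge1$, the conditional smoothing
\[
\|u\|_{L^\infty(B_R\times(t/2,t))}\le c\,t^{-\frac{N}{N[m(p-1)-1]+pr}}\Bigl(\sup_{t/4<\tau<t}\|u(\tau)\|_{L^r}\Bigr)^{\frac{p}{N[m(p-1)-1]+pr}}
\]
under the sole hypothesis $S(t):=\sup_{0<\tau<t}\tau\|u(\tau)\|_{L^\infty}^{q-1}\le1$; the reaction is handled here not by absorption into the dissipation but by the pointwise estimate $u^{q-1}\le S(t)/\tau$, so no $s$-dependent smallness arises. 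A separate bootstrap on the three quantities $S(t)$, $F(t)=\sup\|u\|_{L^s}^s$ and $M(t)=\sup\|u\|_{L^1}$ (Lemma~\ref{lemma4} and Proposition~\ref{lemma5}) then shows, under \eqref{epsilon0} and $q>m(p-1)+p/N$, that $T:=\sup\{t:S(t)\le1\}=+\infty$, which simultaneously yields the $L^1$ bound and \eqref{eq21tot}. For the limit passage the paper exploits monotonicity in the truncation level, the ball radius and the approximation of $u_0$ to get pointwise increasing limits and invokes Beppo~Levi plus standard monotone-operator arguments, which is lighter than the Aubin--Lions/Minty route you propose.
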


To obtain global in time existence for solutions in a broader range of $q$, specifically $q>m(p-1)$, we restrict the analysis to manifolds $M$ where the Poincar\'e inequality also holds.
\begin{theorem}\label{teo2}
Let $M$ be a complete, non-compact Riemannian manifold of infinite volume such that both the Sobolev inequality \eqref{S} and the Poincaré inequality \eqref{P} hold. Let also $m,p$ be as in \eqref{hp}, and assume that 
$$
q>m(p-1)>1.
$$
Let $u_0\ge0$ be such that $u_0\in{ L}^{s}(M)\cap L^{1+m}(M)\cap { L}^{q\frac Np}(M)$, for some $s>\max\left\{q_0,1\right\}$ with $q_0$ as in \eqref{q0}. Also assume that
\begin{equation*}\label{epsilon1}
\|u_0\|_{ L^{s}(M)}\,<\,\varepsilon_1, \quad \|u_0\|_{ L^{q\frac Np}(M)}\,<\,\varepsilon_1,
\end{equation*}
for some $\varepsilon_1=\varepsilon_1(q,m,p,N,s, \mathcal{S}_{p}, \mathcal{C}_p)$ sufficiently small. Then, problem \eqref{problema} admits a solution for any $T>0$,  in the sense of Definition \ref{def21}. Moreover, for any $\tau>0,$ the solution $u$ satisfies $u\in L^{\infty}(M\times(\tau,+\infty))$ and, for any $r>s$, there exists a constant $\Gamma>0$ such that, one has, for all $t>0$,
\begin{equation}\label{eq801}
\|u(t)\|_{L^{\infty}(B_R)}\le \Gamma\, t^{-\beta_{rs}}\,\|u_0\|_{L^{s}(B_R)}^{\frac{ps}{N[m(p-1)-1]+pr}},
\end{equation}
where
\begin{equation*}\label{beta}
\beta_{r,s}:=\frac{1}{m(p-1)-1}\left(1-\frac{ps}{N[m(p-1)-1]+pr}\right)>0\,.
\end{equation*}
\end{theorem}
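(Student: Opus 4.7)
The plan is to follow the same strategy used for Theorem \ref{teo1}, but to exploit the additional Poincaré inequality \eqref{P} throughout in order to cover the whole range $q>m(p-1)$. First I would approximate problem \eqref{problema} by Dirichlet problems on geodesic balls $B_n\subset M$ with regular bounded initial data $u_{0,n}\to u_0$ in $L^s(M)\cap L^{q\frac Np}(M)\cap L^{1+m}(M)$, obtaining local-in-time solutions $u_n$ from standard theory for doubly nonlinear parabolic equations on bounded smooth domains (with the reaction term possibly truncated at finite height). The goal is to derive $n$-uniform a priori estimates that can be passed to the limit.

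The core of the proof is the Caccioppoli-type energy estimate developed in Section \ref{aux}. Testing the equation against a power $u_n^{\sigma-1}$ for suitable $\sigma$ and integrating yields, schematically,
\begin{equation*}
\frac{1}{\sigma}\frac{d}{dt}\|u_n\|_{L^\sigma}^\sigma + c_1\bigl\|\nabla (u_n^{\gamma})\bigr\|_{L^p}^p \le c_2 \int_{M} u_n^{\sigma+q-1}\,d\mu,
\end{equation*}
with $\gamma=\frac{m(p-1)+\sigma-1}{p}$. To control the reaction integral without any Fujita-type lower bound on $q$, I would split it by Hölder's inequality and handle one factor by Sobolev \eqref{S} (on $L^{p^*}$ of $u_n^\gamma$) and another by Poincaré \eqref{P} (upgrading the $L^p$ norm of $u_n^\gamma$ to its gradient), extracting a remaining smallness factor in $L^s$ or in $L^{q\frac Np}$. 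It is precisely this use of \eqref{P} that enables absorption of the reaction into the dissipation for every $q>m(p-1)$; starting from the smallness of $\|u_0\|_{L^s}$ and $\|u_0\|_{L^{q\frac Np}}$, a continuity-in-time argument then propagates the required bound forward for all $t>0$, giving uniform $L^\sigma$ estimates for $u_n$ in a suitable range of $\sigma$.

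Once the $L^\sigma$ bounds are secured for $\sigma\ge s$, a Moser-type iteration with exponents $\sigma_k\to\infty$---each step chaining the above energy inequality with \eqref{S}---produces a recursive improvement of integrability, and careful bookkeeping of the constants gives precisely the smoothing estimate \eqref{eq801} with the announced exponent $\tfrac{ps}{N[m(p-1)-1]+pr}$ and time decay $t^{-\beta_{rs}}$. Finally, the uniform-in-$n$ local bounds, combined with Aubin--Lions-type compactness and the standard monotonicity argument to identify $|\nabla u_n^m|^{p-2}\nabla u_n^m$ in the limit, yield a weak solution $u$ to \eqref{problema} in the sense of Definition \ref{def21} on $M\times(0,T)$ for every $T>0$, with \eqref{eq801} inherited by lower semicontinuity.

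The hardest step is undoubtedly the interpolation inside the Caccioppoli estimate: the Hölder exponents and the smallness factors must be chosen so that the reaction integral is strictly dominated by the dissipation for every $q>m(p-1)$, however close to the threshold. The Poincaré inequality is essential here---the $L^p$ term cannot otherwise be controlled at the same scaling as the $L^{p^*}$ term produced by Sobolev---and it is what forces the extra smallness assumption on $\|u_0\|_{L^{q\frac Np}(M)}$ that is absent from Theorem \ref{teo1}.
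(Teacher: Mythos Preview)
Your overall architecture---approximate on balls, derive energy inequalities, absorb the reaction using both \eqref{S} and \eqref{P}, propagate smallness by continuity, then pass to the limit---matches the paper's, and your description of the absorption step (Poincar\'e on one factor, Sobolev on another, with the $L^{qN/p}$ norm supplying the smallness) is exactly how Lemma~\ref{lemma71} works.

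There is, however, a real gap in your route to the $L^\infty$ bound. You propose a single Moser iteration on powers $\sigma_k\to\infty$, but the absorption constant in the energy inequality at level $\sigma$ behaves like $(\sigma-1)\bigl(\tfrac{p}{m(p-1)+\sigma-1}\bigr)^p\sim\sigma^{1-p}\to 0$, so the smallness threshold on $\|u\|_{L^{qN/p}}$ needed to dominate the reaction degenerates as $\sigma\to\infty$. The paper flags this explicitly in the Remark after Proposition~\ref{prop71}: one cannot send $s\to\infty$ there. The alternative Sobolev-only splitting (as in Lemma~\ref{lemma41}) would require control of $\|u\|_{L^{q_0}}$, but in the genuinely new regime $m(p-1)<q\le m(p-1)+\tfrac pN$ one has $q_0\le 1$, so that route is closed too.

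The paper resolves this by running \emph{two separate} iterations. The $L^\infty$ bound (Lemma~\ref{lemma3}) comes from a De~Giorgi iteration on level sets $G_{k_i}(u)$ in which the reaction is controlled not by any Lebesgue smallness but by the a~priori hypothesis $S(t):=\sup_{0<\tau<t}\tau\|u(\tau)\|_\infty^{q-1}\le 1$; this makes the constants independent of the iteration index. The Poincar\'e-based Moser iteration (Proposition~\ref{prop71}) is then run only up to a \emph{finite} exponent $r>s$ to produce the $L^s\to L^r$ smoothing. Concatenating the two gives \eqref{eq801}, and a bootstrap argument (Lemmas~\ref{lemma51}--\ref{lemma6}) closes the loop by showing $S(t)\le 1$ persists for all $t$. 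Your sketch is missing the auxiliary quantity $S(t)$ and this two-layer structure, without which the iteration to $L^\infty$ does not close. A minor side remark: the paper passes to the limit by monotone convergence (the approximants are ordered in $k$, $R$, $h$), which is lighter than the Aubin--Lions machinery you invoke.
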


\section{Caccioppoli-type estimates and the iterative scheme}\label{aux}

Let $x_0\in M$ be a given reference point. For any $x\in M$, we denote by $r(x)=\operatorname{dist}(x_0,x)$ the Riemannian distance between $x$ and $x_0$. In what follows, we let $$B_R(x_0):=\{x\in M\,: r(x)<R\}$$ be the geodesic ball with centre $x_0 \in M$ and radius $R > 0$. We set $B_R\equiv B_R(x_0)$. As mentioned above, we denote by $\mu$ the Riemannian measure on $M$.
\smallskip

For any $k\in\R^+$ we define the following standard truncation function on $\R$: 
\begin{equation}\label{31}
\begin{aligned}
&T_k(s):=\begin{cases} k\quad &\text{if}\,\,\, s\ge k\,, \\ s \quad &\text{if}\,\,\, |s|< k\,, \\ -k\quad &\text{if}\,\,\, s\le -k\,.\end{cases}
\end{aligned}
\end{equation}
For every $R>0$, $k>0,$ and for some $\varepsilon>0$ small enough, we consider the problem
\begin{equation}\label{eq111}
\begin{cases}
u_t = \Delta_p u^m +\, \varepsilon \Delta u+\, T_k(u^q)\quad&\text{in}\,\, B_R\times (0,+\infty) \\
u=0 &\text{in}\,\, \partial B_R\times (0,+\infty)\\
u=u_0 &\text{in}\,\, B_R\times \{0\}, \\
\end{cases}
\end{equation}
where $u_0\in L^\infty(B_R)$, $u_0\geq 0$. We will deal with solutions $u_R$ to problem \eqref{eq111} for arbitrary fixed $R>0$. Moreover, for convenience, we will denote $u_R$ with $u$. The solution is meant in the weak sense as follows.
\begin{definition}\label{def31}
Let $m,p,q$ be as in \eqref{hp}. Let $u_0\in L^\infty(B_R)$, $u_0\geq 0$. We say that a non-negative function $u$ is a solution to problem \eqref{eq111} if
$$
u\in L^{\infty}(B_R\times(0,+\infty)), \,\,\, u^m\in {L^p}\big((0, T); W^{1,p}_{0}(B_R)\big) \quad\quad \textrm{ for any }\, T>0,
$$
\noindent and for any $T>0$, $\varphi \in C_c^{\infty}(B_R\times[0,T])$ such that $\varphi(x,T)=0$ for every $x\in B_R$, $u$ satisfies the equality:
\begin{equation*}
\begin{aligned}
-\int_0^T\int_{B_R} \,u\,\varphi_t\,d\mu\,dt +& \int_0^T\int_{B_R}  |\nabla u^m|^{p-2}\left \langle \nabla u^m, \nabla \varphi \right \rangle \,d\mu\,dt\,+\varepsilon \int_0^T\int_{B_R} \left \langle\nabla u,\nabla \varphi\right \rangle\,d\mu\,dt\\&= \int_0^T\int_{B_R} \,T_k(u^q)\,\varphi\,d\mu\,dt  +\int_{B_R} \,u_0(x)\,\varphi(x,0)\,d\mu.
\end{aligned}
\end{equation*}
\end{definition}
\medskip 
\begin{remark}

Let us observe that a solution $u$  to problem \eqref{eq111} in the sense of   Definition \ref{def31} is,  in particular,  a distributional solution of the equation in \eqref{eq111}. Hence,  a standard cut-off  and density argument allows us to check that, for any $0<t_1<t_2<T$ one has 
\begin{equation*}
\begin{aligned}
&\int_{B_R} \,u(x,t_2)\,\varphi(x,t_2)\,d\mu-\int_{t_1}^{t_2}\int_{B_R} \,u\,\varphi_t\,d\mu\,dt + \int_{t_1}^{t_2}\int_{B_R}  |\nabla u^m|^{p-2}\left \langle \nabla u^m, \nabla \varphi \right \rangle \,d\mu\,dt\, \\&+\varepsilon \int_{t_1}^{t_2}\int_{B_R} \left \langle\nabla u,\nabla \varphi\right \rangle\,d\mu\,dt= \int_{t_1}^{t_2}\int_{B_R} \,T_k(u^q)\,\varphi\,d\mu\,dt  +\int_{B_R} \,u(x,t_1)\,\varphi(x,t_1)\,d\mu,
\end{aligned}
\end{equation*}
for any $\varphi \in {\rm Lip}_{\rm loc}(B_R\times [0,T])$. In partucular,  since $u_0$ is bounded and $T_k(u^{q})$ is a bounded and Lipschitz function,  thanks to elliptic regularization,  solutions of problem \eqref{eq111} in the sense of   Definition \ref{def31} are smooth (see for instance \cite{LSU}) and  therefore $f(u)\xi$, for some Lipschitz functions $f (s)$ and  $\xi(t)$, can be taken as test function in order to get, after straightforward manipulations:
\begin{equation}\label{intbyparts}
\begin{aligned}
&\int_{B_R} \,F(u(x,t_2))\xi(t_2)\,d\mu -\int_{t_1}^{t_2}\int_{B_R} \,uf(u)\,\xi_t\,d\mu\,dt + \int_{t_1}^{t_2}\int_{B_R}  |\nabla u^m|^{p-2}\left \langle \nabla u^m, \nabla f(u) \right \rangle \,d\mu\,dt\, \\&+\varepsilon \int_{t_1}^{t_2}\int_{B_R} \left \langle\nabla u,\nabla  f(u)\right \rangle\,d\mu\,dt= \int_{t_1}^{t_2}\int_{B_R} \,T_k(u^q)\,f(u)\,d\mu\,dt  +\int_{B_R} \,F(u(x,t_1))\,\xi(t_1)\,d\mu,
\end{aligned}
\end{equation}
where $F(s)=\int_0^s f(\sigma)d\sigma$.

\end{remark}

\bigskip

For any $s\in \R$, we also define 
\begin{equation}\label{31b}
G_k(s):=s-T_k(s), 
\end{equation}
where $T_k(s)$ has been defined in \eqref{31}. Let $a_1>a_2>0$ and $t>\tau_1>\tau_2>0$. We consider, for any $i\in\mathbb N\cup\{0\}$, the sequences
\begin{equation}\label{eq32}
\begin{aligned}
&k_i:=a_2+(a_1-a_2)2^{-i}\,;\\
&\theta_i:=\tau_2+(\tau_1-\tau_2)2^{-i}\,;
\end{aligned}
\end{equation}
and the cylinders 
\begin{equation}\label{eq32b}
U_i:=B_R\times(\theta_i,t).
\end{equation}

\begin{figure}{}
	\begin{tikzpicture}[scale=1.3]
		\tdplotsetmaincoords{70}{110}
		\tdplotsetrotatedcoords{0}{0}{0}

		\begin{scope}[tdplot_rotated_coords]
			\def\radius{1.5}  
			\def\height{6}    
			
			\draw (-0.25,1.51,0.1) -- (-0.25,1.51,\height+0.1);
			\draw (0.22,-\radius-0.025,-0.09) -- (0.22,-\radius -0.025,\height-0.05);
			
			\draw (0,0,\height) circle (\radius);
			\node[left] at (-\radius+0.3,-2.1,5.5) {$t$};
			
			\draw (0,0,0) circle (\radius);
			\node[left] at (-\radius+0.3,-2.1,-0.6) {$0$};
			\node[left] at (-\radius+0.3,0,-0.3) {$B_R$};
			
			\draw[blue] (0,0,5) circle (\radius);
			\node[left] at (-\radius+0.3,-2.1,4.4){$\theta_0 = \tau_1$};
			\node[right] at (-\radius+0.3,1.15,4.8) {$U_0$};
			
			\draw[blue] (0,0,4) circle (\radius);
			\node[left, black] at 
			(-\radius+0.3,-2.1,3.4) {$\theta_{i-1}$};
			\node[right, black] at (-\radius+0.3,1.15,3.8) {$U_{i-1}$};
			
			\draw[blue] (0,0,3) circle (\radius);
			\node[left] at (-\radius+0.3,-2.1,2.3) {$\theta_i$};
			\node[right] at (-\radius+0.3,1.15,2.8) {$U_i$};
			
			\draw[blue] (0,0,1.5) circle (\radius);
			\node[left] at (-\radius+0.3,-2.1,0.8) {$\tau_2$};
			\node[right] at (-\radius+0.3,1.15,1.2) {$U_\infty$};
			
		\end{scope}
		
	\end{tikzpicture}
		\caption{{The cylinders $U_i = B_R \times (\theta_i, t)$}}
\end{figure}
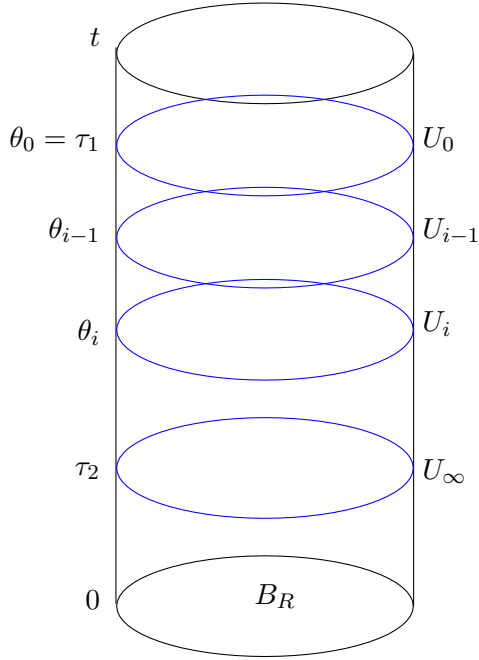

Observe that the sequence $\{\theta_i\}_{i\in \mathbb{N}}$ is monotone decreasing w.r.t. $i$. Furthermore, we define, for any $i\in\mathbb{N}$, the cut-off functions $\xi_i(\tau)$ such that
\begin{equation}\label{eq33}
\xi_i(\tau):=\begin{cases} 1\quad &\theta_{i-1}<\tau<t\\ 0\quad &0<t<\theta_i \end{cases}\quad\quad\text{and}\quad\quad|(\xi_i)_{\tau}|\,\le\, \frac{2^i}{\tau_1-\tau_2}\,.
\end{equation}
Finally, we define
\begin{equation}\label{eq34}
S(t):=\sup_{0<\tau<t}\left(\tau\|u(\tau)\|_{L^{\infty}(B_R)}^{q-1}\right).
\end{equation}
This quantity will serve as a key element in establishing that the maximal existence time $T$ of the solution is $T = +\infty$; see Lemma \ref{lemma4} and Proposition \ref{lemma5}. More precisely, in Lemma \ref{lemma3}, we derive an $L^\infty$ estimate for the solution $u$ to problem \eqref{eq111} in the time interval $t \in (0,T)$, where $T$ denotes the supremum of times such that $S(t) \le 1$. Finally, in Proposition \ref{lemma5}, we prove that this supremum is $+\infty$, thereby ensuring the existence of a bounded solution for all times, for which the estimate in Lemma \ref{lemma3} remains valid.
We can now state the following:
\begin{lemma}\label{lemma1} Let $m$ and $p$ be as in \eqref{hp}. 
Moreover, let  $i\in\mathbb{N}$, $k_i$, $\theta_i$, $U_i$ be defined in \eqref{eq32}, \eqref{eq32b} and $R>0$. Let $u$ be a non-negative solution to problem \eqref{eq111}. Then, for every $s>1$, we have that
\begin{equation*}
\begin{aligned}
\sup_{\tau_1<\tau<t}\int_{B_R}[G_{k_0}(u(x,\tau))]^s\,d\mu &+ \iint_{U_{i-1}}\left|\nabla[G_{k_i}(u)]^{\frac{m(p-1)+s-1}{p}}\right|^{p}\,d\mu d\tau \\& \le 2^i\gamma\,C_1\iint_{U_i}[G_{k_{i+1}}(u)]^s\,d\mu d\tau.
\end{aligned}
\end{equation*}
where $\gamma=\gamma(p,m,s)$ and
\begin{equation}\label{C1}C_1:=\frac{1}{\tau_1-\tau_2}+\frac{S(t)}{\tau_2}\frac{2a_1}{a_1-a_2}.\end{equation}
\end{lemma}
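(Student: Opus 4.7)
The plan is to test the integral identity \eqref{intbyparts} with the admissible function $\varphi = [G_{k_i}(u)]^{s-1}\,\xi_i(\tau)$ on the interval $(\theta_i, t_2)$ for arbitrary $t_2\in(\theta_{i-1}, t)$. Since $\xi_i(\theta_i)=0$ and $\xi_i\equiv 1$ on $(\theta_{i-1}, t)$, the time-derivative term produces $\frac{1}{s}\int_{B_R} G_{k_i}(u(t_2))^s\,d\mu$ at the top endpoint plus a lower-order error controlled by $|\xi_i'|\le 2^i/(\tau_1-\tau_2)$. Writing $v_i := G_{k_i}(u)$, the $\varepsilon$-regularization term reduces to a positive multiple of $\iint v_i^{s-2}|\nabla v_i|^2\xi_i$ and is simply discarded.

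For the principal diffusion term, the chain rule together with $\nabla v_i = \nabla u\,\mathbf{1}_{\{u>k_i\}}$ yields
\begin{equation*}
\iint|\nabla u^m|^{p-2}\langle\nabla u^m,\nabla[v_i^{s-1}]\rangle\,\xi_i\,d\mu\,d\tau = m^{p-1}(s-1)\iint u^{(m-1)(p-1)}\,v_i^{s-2}|\nabla v_i|^p\,\xi_i\,d\mu\,d\tau,
\end{equation*}
and since $u\ge v_i$ on $\{u>k_i\}$ and $(m-1)(p-1)\ge 0$ by \eqref{hp}, this is bounded from below by $c(m,p,s)\iint|\nabla v_i^{\alpha}|^p\xi_i$, where $\alpha = \frac{m(p-1)+s-1}{p}$ is chosen precisely so that $p(\alpha-1)=(m-1)(p-1)+s-2$. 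For the reaction term, $T_k(u^q)\le u^{q-1}\cdot u$, and by \eqref{eq34} together with $\tau\ge\theta_i\ge\tau_2$ on $U_i$, one has $u^{q-1}\le S(t)/\tau_2$. The remaining powers of $u$, $v_i$ and $k_i$ are converted into $v_{i+1}^s$ via the pointwise relations
\begin{equation*}
v_i\le v_{i+1},\qquad u \le v_{i+1}+k_{i+1},\qquad k_i\le\frac{2^{i+1}a_1}{a_1-a_2}\,v_{i+1}\ \text{on}\ \{u>k_i\},
\end{equation*}
the last of which follows from $v_{i+1}\ge k_i-k_{i+1} = (a_1-a_2)2^{-(i+1)}$.

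Combining these bounds, taking the supremum over $t_2\in(\theta_{i-1},t)$, and recognising the definition \eqref{C1} of $C_1$, one arrives at
\begin{equation*}
\sup_{\theta_{i-1}<\tau<t}\int_{B_R}v_i(\tau)^s\,d\mu + \iint_{U_{i-1}}|\nabla v_i^\alpha|^p\,d\mu\,d\tau \le \gamma\,2^i\,C_1\iint_{U_i}v_{i+1}^s\,d\mu\,d\tau,
\end{equation*}
from which the stated inequality follows since $(\tau_1, t)\subset(\theta_{i-1},t)$ and pointwise $G_{k_0}\le G_{k_i}$. The main bookkeeping hurdle is ensuring that the exponents in the diffusion term recombine to produce exactly $|\nabla v_i^{(m(p-1)+s-1)/p}|^p$ with a strictly positive constant (requiring $s>1$, $m>1$, $p>1$), and that the absorption of $k_i$ by $v_{i+1}$ yields precisely the factor $2a_1/(a_1-a_2)$ recorded in \eqref{C1}, rather than a constant with a worse dependence on $a_1,a_2$ that would later obstruct the Moser iteration.
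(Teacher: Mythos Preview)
Your proposal is correct and follows essentially the same route as the paper: test \eqref{intbyparts} with $[G_{k_i}(u)]^{s-1}\xi_i$, discard the $\varepsilon$-term, rewrite the diffusion term as (a lower bound for) $c(m,p,s)\iint|\nabla v_i^{\alpha}|^p\xi_i$, control the reaction via $u^{q-1}\le S(t)/\tau_2$ and the pointwise comparison $u\le \frac{k_i}{k_i-k_{i+1}}\,v_{i+1}$ on $\{u>k_i\}$, and finally exploit the monotonicity $G_{k_0}\le G_{k_i}\le G_{k_{i+1}}$ together with $\xi_i\equiv 1$ on $U_{i-1}$. Your observation that the replacement of $u^{(m-1)(p-1)}$ by $v_i^{(m-1)(p-1)}$ is an inequality (using $u\ge v_i$ and $(m-1)(p-1)\ge 0$) rather than an identity is in fact more precise than the paper, which writes \eqref{eq38} as an equality.
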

\begin{proof}
For any $i\in\mathbb{N}$, after a suitable regularization argument, one is allowed to take  $[G_{k_i}(u)]^{s-1}\xi_i$ ($s>1$) as a test function in \eqref{intbyparts}, that is 
\begin{equation}\label{eq36} 
\begin{aligned}
&\frac 1s\int_{B_R}[G_{k_i}(u(x,t))]^{s}\,d\mu \,-\frac 1s \iint_{U_i}[G_{k_i}(u)]^{s}(\xi_i)_{\tau}\,d\mu d\tau \\ & +\iint_{U_i}\left|\nabla u^m\right|^{p-2}\langle \nabla u^m,\,\nabla[G_{k_i}(u)^{s-1}]\rangle\xi_i\,d\mu d\tau \\ &
+\varepsilon\iint_{U_i}\langle\nabla u,\,\nabla[G_{k_i}(u)^{s-1}]\rangle\xi_i\,d\mu d\tau 
= \iint_{U_i}  T_k(u^{q})\,[G_{k_i}(u)]^{s-1}\xi_i\,d\mu d\tau\,.
\end{aligned}
\end{equation} 

Now observe that, 
\begin{equation}\label{eq38}
\begin{aligned}
\iint_{U_i}& \left|\nabla u^m\right|^{p-2}\left\langle \nabla u^m,\,\nabla[G_{k_i}(u)^{s-1}]\right\rangle\xi_i\,d\mu d\tau\\
&=m^{p-1}(s-1)\iint_{U_i} [G_{k_i}(u)]^{(m-1)(p-1)+s-2 } \left|\nabla [G_{k_i}(u)]\right|^{p}\xi_i\,d\mu d\tau\,,\\
&=m^{p-1}(s-1)\left(\frac{p}{m(p-1)+s-1}\right)^p\iint_{U_i}\left|\nabla [G_{k_i}(u)]^{\frac{ m(p-1)+s-1}{p}}\right|^{p}\xi_i\,d\mu d\tau\,.
\end{aligned}
\end{equation}
Now, combining together \eqref{eq36} and \eqref{eq38}, by using \eqref{eq33} and the fact that $T_k(u^q)\le u^q$, we can write
\begin{equation}\label{eq39}
\begin{aligned}
\frac 1s\int_{B_R}&[G_{k_i}(u(x,t))]^{s}\,d\mu + \varepsilon(s-1) \iint_{U_i}[G_{k_i}(u)]^{s-2} |\nabla u|^2\xi_i\,d\mu d\tau\\
&+ m^{p-1}(s-1)\left(\frac{p}{m(p-1)+s-1}\right)^p\iint_{U_i}\left|\nabla [G_{k_i}(u)]^{\frac{m(p-1)+s-1}{p}}\right|^{p}\xi_i\,d\mu d\tau\\
&\le \frac 1s \iint_{U_i}[G_{k_i}(u)]^{s}|(\xi_i)_{\tau}|\,d\mu d\tau+\iint_{U_i} u^{q}\,[G_{k_i}(u)]^{s-1}\xi_i\,d\mu d\tau\,\\
&\le \frac{2^i}{\tau_1-\tau_2}\iint_{U_i}[G_{k_i}(u)]^{s}\,d\mu d\tau+\iint_{U_i} u^{q}\,[G_{k_i}(u)]^{s-1}\xi_i\,d\mu d\tau.
\end{aligned}
\end{equation}
We define
\begin{equation}\label{gamma}
\gamma:=\left[\min\left\{\frac 1s,\,\, m^{p-1}(s-1)\left(\frac{p}{m(p-1)+s-1}\right)^p\right\}\right]^{-1},
\end{equation}
thus, by using \eqref{gamma} and the fact that $\varepsilon>0$, \eqref{eq39} reads
\begin{equation}\label{eq310}
\begin{aligned}
\int_{B_R}[G_{k_i}(u(x,t))]^{s}\,d\mu &+\iint_{U_i}\left|\nabla [G_{k_i}(u)]^{\frac{m(p-1)+s-1}{p}}\right|^{p}\xi_i\,d\mu d\tau\\
&\le\frac{2^i \gamma }{\tau_1-\tau_2}\iint_{U_i}[G_{k_i}(u)]^{s}\,d\mu d\tau+\gamma \iint_{U_i} u^{q}\,[G_{k_i}(u)]^{s-1}\xi_i \,d\mu d\tau.
\end{aligned}
\end{equation}
Observe that the sequence $\{k_i\}_{i\in\mathbb{N}}$ is monotone decreasing, hence
$$
G_{k_0}(u)\le ...\le  G_{k_i}(u)\le G_{k_{i+1}}(u)\le ...\le  u\quad\quad\text{for all}\,\,\,i\in\mathbb{N}.
$$
Thus \eqref{eq310} can be rewritten as
\begin{equation}\label{eq311}
\begin{aligned}
\int_{B_R}[G_{k_0}(u(x,t))]^{s}\,d\mu &+\iint_{U_{i-1}}\left|\nabla [G_{k_i}(u)]^{\frac{m(p-1)+s-1}{p}}\right|^{p}\,d\mu d\tau\\
&\le \frac{2^i\,\gamma }{\tau_1-\tau_2}\iint_{U_i}[G_{k_{i+1}}(u)]^{s}\,d\mu d\tau+\gamma \iint_{U_i} u^{q}\,[G_{k_{i+1}}(u)]^{s-1}d\mu d\tau.
\end{aligned}
\end{equation}
Now we define 
$$
I:=\gamma \iint_{U_i} u^{q-1}\,u\,[G_{k_{i+1}}(u)]^{s-1}d\mu d\tau.
$$
Observe that, for any $i\in\mathbb{N}$,
$$
\frac{u}{k_i}\chi_i\,\le\, \frac{u-k_{i+1}}{k_i-k_{i+1}}\chi_i,
$$
where $\chi_i$ is the characteristic function of $D_i:=\{(x,t)\in U_i:\,u(x,t)\ge k_i\}$. Then, by using \eqref{eq34}
\begin{equation}\label{eq312}
\begin{aligned}
I&\le\gamma \int_{\theta_i}^t \frac{1}{\tau}\tau\|u(\tau)\|_{L^{\infty}(B_R)}^{q-1}\int_{B_R}u\left[G_{k_{i+1}}(u)\right]^{s-1}\,d\mu d\tau\\
&=\gamma \int_{\theta_i}^t \frac{1}{\tau}\tau\|u(\tau)\|_{L^{\infty}(B_R)}^{q-1} \int_{B_R}k_i\frac{u}{k_i}\left[G_{k_{i+1}}(u)\right]^{s-1}\,d\mu d\tau\\
&\le\gamma \frac{k_i}{k_i-k_{i+1}}S(t)\int_{\theta_i}^{t}\frac{1}{\tau}  \int_{B_R}\left[G_{k_{i+1}}(u)\right]^{s}\,d\mu d\tau.
\end{aligned}
\end{equation}
By substituting \eqref{eq312} into \eqref{eq311} we obtain 
\begin{equation*} 
\begin{aligned}
\sup_{\tau_1<\tau<t} &\int_{B_R}[G_{k_0}(u(x,\tau))]^{s}\,d\mu +\iint_{U_{i-1}}\left|\nabla [G_{k_i}(u)]^{\frac{m(p-1)+s-1}{p}}\right|^{p}\,d\mu d\tau\\
&\le \frac{2^i\,\gamma }{\tau_1-\tau_2}\iint_{U_i}[G_{k_{i+1}}(u)]^{s}\,d\mu d\tau+\frac{k_i\,\gamma}{k_i-k_{i+1}}\frac{S(t)}{\theta_\infty} \iint_{U_i} [G_{k_{i+1}}(u)]^{s}d\mu d\tau.
\end{aligned}
\end{equation*}
Now, observe that
$$
\frac{k_i}{k_i-k_{i+1}}=\frac{2^{i+1}a_2}{a_1-a_2}+2, \quad\text{and}\quad \theta_\infty\equiv\tau_2.
$$
Consequently, by choosing $C_1$ as in \eqref{C1}, we get
\begin{equation*} 
\begin{aligned}
\sup_{\tau_1<\tau<t} \int_{B_R}[G_{k_0}(u(x,t))]^{s}\,d\mu &+\iint_{U_{i-1}}\left|\nabla [G_{k_i}(u)]^{\frac{m(p-1)+s-1}{p}}\right|^{p}\,d\mu d\tau\\
&\le 2^i\,\gamma\, C_1\iint_{U_i}[G_{k_{i+1}}(u)]^{s}\,d\mu d\tau.
\end{aligned}
\end{equation*}
\end{proof}
For any $1<r<s$ and  $\varepsilon>0$ small enough we provide the following refined version of Lemma   \ref{lemma1}
\begin{lemma}\label{lemma2}
Let $m$ and $p$ be as in \eqref{hp}. Moreover, let $1<r<s$ and assume that \eqref{S} holds. Let $k_i$, $\theta_i$, $U_i$ be defined as in \eqref{eq32}, \eqref{eq32b} and $R>0$. Let $u$ be a solution to problem \eqref{eq111}. Then, for every $i\in\mathbb{N}$ and $\varepsilon>0$, we have
\begin{equation*}
\begin{aligned}
\sup_{\tau_1<\tau<t}&\int_{B_R}[G_{k_0}(u(x,\tau))]^s\,d\mu + \iint_{U_{i-1}}\left|\nabla[G_{k_i}(u)]^{\frac{m(p-1)+s-1}{p}}\right|^{p}\,d\mu d\tau\\
&\le \varepsilon \iint_{U_i} \left|\nabla[G_{k_{i+1}}(u)]^{\frac{m(p-1)+s-1}{p}}\right|^{p} \,d\mu d\tau\\
&+ C(\varepsilon)(2^i\gamma C_1)^{\frac{N[m(p-1)+s-1-r]+p r}{N[m(p-1)-1]+p r}}(t-\tau_2)\left(\sup_{\tau_2<\tau<t}\int_{B_R}[G_{k_{\infty}}(u(x,\tau))]^r\,d\mu\right)^{\lambda},
\end{aligned}
\end{equation*}
with $C_1$ and $\gamma$ defined as in \eqref{C1} and \eqref{gamma} respectively, $\lambda=\frac{N(m(p-1)-1)+p s}{N(m(p-1)-1)+p r}$ and for some $C(\varepsilon)>0$.
\end{lemma}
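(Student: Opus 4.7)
The plan is to bootstrap Lemma \ref{lemma1}: apply it first to bound the left-hand side by $2^i\gamma C_1\iint_{U_i}[G_{k_{i+1}}(u)]^s\,d\mu d\tau$, then reinterpret this space--time $L^s$ integral through a Sobolev-interpolation argument so that part of it can be absorbed into the gradient term, while the remaining part is controlled by the $L^r$ supremum.

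Set $w:=G_{k_{i+1}}(u)$ and $\alpha:=\frac{m(p-1)+s-1}{p}$, and observe that Sobolev \eqref{S} applied to $w^{\alpha}\in W^{1,p}_0(B_R)$ gives $\|w\|_{L^{p^*\alpha}(B_R)}^{\alpha}\le \mathcal{S}_p^{-1}\|\nabla w^{\alpha}\|_{L^p(B_R)}$. Since $p^*\alpha>s$ (which reduces to $N[m(p-1)-1]+ps>0$, always true under \eqref{hp}), one can pick $\theta\in(0,1)$ so that $\frac1s=\frac{1-\theta}{r}+\frac{\theta}{p^*\alpha}$ and use the log-convexity interpolation
\[
\|w(\tau)\|_{L^s(B_R)}\le \|w(\tau)\|_{L^r(B_R)}^{1-\theta}\,\|w(\tau)\|_{L^{p^*\alpha}(B_R)}^{\theta},
\]
followed by the Sobolev bound, to obtain
\[
\int_{B_R}w^s\,d\mu\le C_{\mathcal{S}}\,\|w(\tau)\|_{L^r(B_R)}^{s(1-\theta)}\,\|\nabla w^{\alpha}(\tau)\|_{L^p(B_R)}^{a},\qquad a:=s\theta/\alpha.
\]

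Next I would integrate over $(\theta_i,t)$, pull out the $L^r$ supremum, and apply H\"older in time with exponent $p/a$ (noting $a<p$ since $N[m(p-1)-1]+pr>0$) to get
\[
\iint_{U_i}w^s\,d\mu d\tau\le C_{\mathcal{S}}\,\Big(\sup_{\tau_2<\tau<t}\|w(\tau)\|_{L^r(B_R)}\Big)^{s(1-\theta)}(t-\tau_2)^{1-a/p}\Big(\iint_{U_i}|\nabla w^{\alpha}|^p\,d\mu d\tau\Big)^{a/p}.
\]
Multiplying by $2^i\gamma C_1$ (from Lemma \ref{lemma1}) and applying Young's inequality $XY^{a/p}\le \varepsilon Y+C(\varepsilon)X^{p/(p-a)}$ with $Y=\iint_{U_i}|\nabla w^{\alpha}|^p$ splits the estimate into the desired absorbable gradient term plus a term of the form $C(\varepsilon)(2^i\gamma C_1)^{p/(p-a)}(t-\tau_2)^{(1-a/p)p/(p-a)}\big(\sup\|w(\tau)\|_{L^r}\big)^{s(1-\theta)p/(p-a)}$.

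The remaining work is purely algebraic: verify that the exponents collapse to the stated ones. A direct computation gives $p/(p-a)=\frac{N[m(p-1)+s-1-r]+pr}{N[m(p-1)-1]+pr}$, while $(1-a/p)\cdot p/(p-a)=1$ yields the linear factor $(t-\tau_2)$, and $s(1-\theta)\cdot p/(p-a)=r\lambda$ with $\lambda$ as in the statement, so that $\big(\sup_\tau\|w\|_{L^r}^r\big)^{\lambda}$ appears. Finally, since $k_{i+1}>k_\infty=a_2$, one has $G_{k_{i+1}}(u)\le G_{k_\infty}(u)$ and the $L^r$ supremum can be replaced by the one involving $G_{k_\infty}(u)$. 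The main obstacle is not analytic but bookkeeping: carrying out the interpolation algebra to verify that all three target exponents in the conclusion emerge exactly from the Sobolev--H\"older--Young chain; in particular one must confirm $a<p$ and $p^*\alpha>s$ hold for \emph{all} admissible $(m,p,s,r)$ in \eqref{hp} with $1<r<s$, which again reduces to $N[m(p-1)-1]+pr>0$ and the analogous inequality with $s$, both trivial under the standing hypotheses.
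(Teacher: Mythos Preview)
Your proposal is correct and follows essentially the same route as the paper: both start from Lemma~\ref{lemma1}, split $\int_{B_R}[G_{k_{i+1}}(u)]^s$ between $L^r$ and the Sobolev scale $L^{p^*\frac{m(p-1)+s-1}{p}}$, apply \eqref{S}, and then use Young's inequality to absorb the gradient term. The only cosmetic difference is the order of operations: the paper applies Young's inequality pointwise in $\tau$ and then integrates over $(\theta_i,t)$ (which directly yields the factor $(t-\tau_2)$), whereas you integrate first, apply H\"older in the time variable, and then Young's; since $(1-a/p)\cdot p/(p-a)=1$, both orderings produce identical exponents.
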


\begin{remark}
Observe that, since $1<r<s$ and due to \eqref{hp}, $\lambda>1$. 
\end{remark}

\begin{proof}
Let us fix $s>1$ and $1<r<s$. We define
\begin{equation}\label{eq316}
\alpha:=r\,\frac{N[m(p-1)-1]+p s}{N[m(p-1)+s-1-r]+p r}.
\end{equation}
Observe that, due to \eqref{hp} and the choice $1<r<s$,
$$
0<\alpha<s.
$$
Now, by the H\"older inequality with exponents $\frac{p N}{N-p}\left(\frac{m(p-1)+s-1}{p(s-\alpha)}\right)$ and $\frac{N[m(p-1)+s-1]}{N[m(p-1)+\alpha-1]+p(s-\alpha)}$, we can write
\begin{equation}\label{eq317}
\begin{aligned}
\int_{B_R}[G_{k_{i+1}}(u)]^s\,d\mu&= \int_{B_R}[G_{k_{i+1}}(u)]^{s-\alpha}[G_{k_{i+1}}(u)]^\alpha\,d\mu\\
&\le\left(\int_{B_R}[G_{k_{i+1}}(u)]^{\left(\frac{m(p-1)+s-1}{p}\right)\frac{p N}{N-p}}\,d\mu\right)^{\left(\frac{p(s-\alpha)}{m(p-1)+s-1}\right)\frac{N-p}{p N}}\\
&\quad\times \left(\int_{B_R}[G_{k_{i+1}}(u)]^{\frac{\alpha\,N[m(p-1)+s-1]}{N[m(p-1)+\alpha-1]+p(s-\alpha)}}\,d\mu\right)^{\frac{N[m(p-1)+\alpha-1]+p(s-\alpha)}{N[m(p-1)+s-1]}}.
\end{aligned}
\end{equation}
Due to the definition of $\alpha$ in \eqref{eq316} and inequality \eqref{S}, inequality \eqref{eq317} becomes 
\begin{equation}\label{eq318}
\begin{aligned}
\int_{B_R}[G_{k_{i+1}}(u)]^s\,d\mu&\le\left(\frac{1}{\mathcal{S}_{p}}\left\|\nabla[G_{k_{i+1}}(u)]^{\frac{m(p-1)+s-1}{p}}\right\|_{L^p(B_R)}\right)^{\frac{p(s-\alpha)}{m(p-1)+s-1}}\\
&\quad\quad \times \left(\int_{B_R}[G_{k_{i+1}}(u)]^{r}\,d\mu\right)^{\frac\alpha r}.
\end{aligned}
\end{equation}
We multiply both sides of \eqref{eq318} by $2^i \gamma C_1$ with $\gamma$ and $C_1$ as in \eqref{C1} and \eqref{gamma} respectively  (see Lemma \ref{lemma1}). Then, by Young's inequality with exponents $\frac{m(p-1)+s-1}{s-\alpha}$ and $\frac{m(p-1)+s-1}{m(p-1)+\alpha-1}$, we get
\begin{equation}\label{eq319}
\begin{aligned}
2^i\,\gamma\,C_1\int_{B_R}[G_{k_{i+1}}(u)]^s\,d\mu&\le \varepsilon\int_{B_R}\left|\nabla[G_{k_{i+1}}(u)]^{\frac{m(p-1)+s-1}{p}}\right|^{p}\,d\mu\\
&+C(\varepsilon)(2^i\gamma\,C_1)^{\frac{m(p-1)+s-1}{m(p-1)+\alpha-1}}\left(\int_{B_R}[G_{k_{i+1}}(u)]^{r}\,d\mu\right)^{\frac\alpha r\frac{m(p-1)+s-1}{m(p-1)+\alpha-1}}.
\end{aligned}
\end{equation}
Define
$$
\lambda:=\frac\alpha r\left(\frac{m(p-1)+s-1}{m(p-1)+\alpha-1}\right)=\frac{N[m(p-1)-1]+p s}{N[m(p-1)-1]+p r}.
$$
Now, by Lemma \ref{lemma1}, one has
\begin{equation}\label{eq320}
\begin{aligned}
\sup_{\tau_1<\tau<t}\int_{B_R}[G_{k_0}(u(x,\tau))]^s\,d\mu &+ \iint_{U_{i-1}}\left|\nabla[G_{k_i}(u)]^{\frac{m(p-1)+s-1}{p}}\right|^{p}\,d\mu d\tau \\& \le 2^i\gamma\,C_1\iint_{U_i}[G_{k_{i+1}}(u)]^s\,d\mu d\tau.
\end{aligned}
\end{equation}
Furthermore, let us integrate inequality \eqref{eq319} in the time interval $\tau\in(\theta_i,t)$, then we observe that
\begin{equation}\label{eq321}
\begin{aligned}
 C(\varepsilon)&(2^i\gamma C_1)^{\frac{m(p-1)+s-1}{m(p-1)+\alpha-1}} \int_{\theta_i}^{t}\left(\int_{B_R}[G_{k_{i+1}}(u)]^{r}\,d\mu\right)^{\lambda}\,d\tau\\
 &\le C(\varepsilon)(2^i\gamma C_1)^{\frac{m(p-1)+s-1}{m(p-1)+\alpha-1}}(t-\tau_2)\left(\sup_{\tau_2<\tau<t}\int_{B_R}[G_{k_{i+1}}(u(x,\tau))]^r\,d\mu\right)^\lambda
 \end{aligned}
\end{equation}
where we have used that $\tau_2<\theta_i$ for every $i\in\mathbb{N}$.
Finally, we substitute \eqref{eq320} and \eqref{eq321} into \eqref{eq319} to get
$$
\begin{aligned}
\sup_{\tau_1<\tau<t}&\int_{B_R}[G_{k_0}(u(x,\tau))]^s\,d\mu + \iint_{U_{i-1}}\left|\nabla[G_{k_i}(u)]^{\frac{m(p-1)+s-1}{p}}\right|^{p}\,d\mu d\tau\\
&\le \varepsilon\iint_{U_i}\left|\nabla[G_{k_{i+1}}(u)]^{\frac{m(p-1)+s-1}{p}}\right|^{p}\,d\mu d\tau\\
&\quad +C(\varepsilon)(2^i\gamma C_1)^{\frac{m(p-1)+s-1}{m(p-1)+\alpha-1}}(t-\tau_2)\left(\sup_{\tau_2<\tau<t}\int_{B_R}[G_{k_{i+1}}(u(x,\tau))]^r\,d\mu\right)^\lambda.
\end{aligned}
$$
The thesis follows by noticing that, for any $i\in\mathbb{N}$
$$
G_{k_i}(u)\le G_{k_{i+1}}(u)\le \ldots \le G_{k_\infty}(u),
$$
and that
$$
\frac{m(p-1)+s-1}{m(p-1)+\alpha-1}=\frac{N[m(p-1)+s-1-r]+p r}{N[m(p-1)-1]+p r}.
$$
\end{proof}

Based on the previous Lemmas and on a Moser-type iteration procedure the following result contains a local smoothing estimate on  solutions to problem \eqref{eq111}. More precisely, with Lemmas \ref{lemma1} and \ref{lemma2} at hand, we can show that the solution $u$ to problem \eqref{eq111} with a bounded and non-negative initial datum, satisfies an $L^\infty$ estimate in the time interval $\left(\frac t2,t\right)$ depending on the $L^r$ norm of the solution itself and provided that
\(S(t)\le1\). The result is achieved by means of a Moser-type iteration procedure performed on the quantity $J_i$ defined in \eqref{eq323}.

\begin{lemma}\label{lemma3}
Let $m, p$ be as in \eqref{hp}. Let $u$ be a solution to problem \eqref{eq111} with $u_0\in L^{\infty}(B_R)$, $u_0\ge 0$ and $S(t)$ be defined as in \eqref{eq34}. Suppose that \eqref{S} holds and that, for all $t\in(0,T)$, $$S(t)\le 1.$$ Let $r\ge1$, then there exists $c=c(p,r)$ such that
\begin{equation*}
\|u(x,\tau)\|_{L^{\infty}\left(B_R\times\left(\frac t2,t\right)\right)}\,\le\,c\,t^{-\frac{N}{N[m(p-1)-1]+p r}}\left[\sup_{\frac t4<\tau<t}\int_{B_R}u^r\,d\mu\right]^{\frac{p}{N[m(p-1)-1]+p r}},
\end{equation*}
for all $t\in(0,T)$.
\end{lemma}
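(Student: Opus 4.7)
The strategy is a Moser-type iteration over the truncation levels $k_i$, built on the Caccioppoli-Sobolev estimate of Lemma~\ref{lemma2}. Fix $r\ge 1$, pick an auxiliary exponent $s>\max\{r,1\}$ (to be eliminated at the end), and set $\tau_1=t/2$, $\tau_2=t/4$, $a_1=a$, $a_2=a/2$, where $a>0$ is the level that will eventually bound $u$ from above on $B_R\times(t/2,t)$. The standing assumption $S(t)\le 1$, together with these choices, makes the constant $C_1$ of \eqref{C1} of order $1/t$ uniformly in $i$, so the reaction term $T_k(u^q)$ contributes only a bounded, scale-consistent forcing along the iteration.

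The natural iteration quantity is
$$J_i \;:=\; \sup_{\theta_{i-1}<\tau<t}\int_{B_R}[G_{k_i}(u(x,\tau))]^s\,d\mu \;+\; \iint_{U_{i-1}}\bigl|\nabla[G_{k_i}(u)]^{(m(p-1)+s-1)/p}\bigr|^p\,d\mu\,d\tau.$$
Applying Lemma~\ref{lemma2} with $\varepsilon$ chosen small enough to absorb its gradient-term on the right, and invoking the discrete Chebyshev trick
$$[G_{k_{i+1}}(u)]^r \;\le\; (k_{i+1}-k_{i+2})^{-(s-r)}\,[G_{k_{i+2}}(u)]^s \qquad \text{on } \{u>k_{i+1}\},$$
one converts the estimate into a genuine recursion
$$J_{i+1} \;\le\; \frac{C\,b^{\,i}}{a^{\,\delta}}\,t^{\,\rho}\,J_i^{\,\lambda}, \qquad \lambda \;=\; \frac{N[m(p-1)-1]+ps}{N[m(p-1)-1]+pr} \;>\; 1,$$
with explicit $b>1$, $\delta>0$, $\rho\in\mathbb{R}$ depending only on $N,p,m,s,r$. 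The standard Moser convergence lemma now forces $J_i\to 0$ as soon as $J_0^{\,\lambda-1}\le c\,a^{\delta}\,t^{-\rho}$, and since $J_i\to 0$ means $G_{k_\infty}(u)=(u-a/2)_+$ vanishes in $L^s(B_R\times(\tau_2,t))$, we conclude $u\le a/2\le a$ on $B_R\times(t/2,t)$.

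Solving the smallness threshold for the minimal admissible $a$ yields $a\lesssim t^{-\alpha}\bigl(\sup_{t/4<\tau<t}\int_{B_R}u^r\,d\mu\bigr)^{\beta}$, once $J_0$ is controlled by the $L^r$-sup through a final Hölder interpolation (using $G_{k_0}(u)\le u$ and $s>r$). The main obstacle is the algebraic bookkeeping: the exponents $\alpha,\beta$ must come out to be exactly $\alpha=N/(N[m(p-1)-1]+pr)$ and $\beta=p/(N[m(p-1)-1]+pr)$, with the auxiliary exponent $s$ cancelling completely after one solves for $a$. This cancellation is the built-in consistency check of the scheme, since the claimed bound is $s$-independent: all the $s$-dependence accumulated through Sobolev and Hölder must conspire to drop out. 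The resulting powers of $t$ and of the $L^r$-sup are precisely the Barenblatt scaling exponents of the unforced Leibenson equation~\eqref{leibenson}, which is consistent with the fact that, under $S(t)\le 1$, the reaction contributes only constants to the iteration.
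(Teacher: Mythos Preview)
Your proposal conflates two distinct iterations that the paper keeps separate, and this creates a genuine gap. The $i$-iteration built into Lemmas~\ref{lemma1}--\ref{lemma2} (with $k_i=a_2+(a_1-a_2)2^{-i}$, $\theta_i=\tau_2+(\tau_1-\tau_2)2^{-i}$) is \emph{not} a De~Giorgi recursion with exponent $\lambda>1$; it is a filling-hole device whose sole purpose is to absorb the $\varepsilon$-gradient term on the right of Lemma~\ref{lemma2}. Iterating Lemma~\ref{lemma2} in $i$ and summing the geometric series (with $2^\beta\varepsilon<1/2$) produces the single level-jumping estimate
\[
\sup_{\tau_1<\tau<t}\int_{B_R}[G_{a_1}(u)]^s\,d\mu \;\le\; \tilde C\,(2\gamma C_1)^\beta\,(t-\tau_2)\left(\sup_{\tau_2<\tau<t}\int_{B_R}[G_{a_2}(u)]^r\,d\mu\right)^{\lambda},
\]
which relates the \emph{fixed} levels $a_1=k_0$ and $a_2=k_\infty$; the intermediate $k_i$'s have been summed out and no longer appear. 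With your once-and-for-all choices $a_1=a$, $a_2=a/2$, $\tau_1=t/2$, $\tau_2=t/4$, this is one inequality, not a recursion. Moreover, since $k_i$ is \emph{decreasing}, $G_{k_i}(u)$ is \emph{increasing} in $i$, so your $J_i$ is monotone increasing and cannot tend to zero; the Chebyshev inequality you wrote, $[G_{k_{i+1}}(u)]^r\le(k_{i+1}-k_{i+2})^{-(s-r)}[G_{k_{i+2}}(u)]^s$, goes the wrong way for the quantity appearing on the right of Lemma~\ref{lemma2}, which involves $G_{k_\infty}$.

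The actual De~Giorgi step in the paper is a \emph{second} iteration over a new index $n$: one introduces increasing levels $h_n=C_0(1-2^{-n-1})\nearrow C_0$, midpoints $\bar h_n$, and times $t_n=\tfrac{t}{2}(1-2^{-n-1})$, and applies the single estimate above with the \emph{varying} choices $a_1=\bar h_n$, $a_2=h_n$, $\tau_1=t_{n+1}$, $\tau_2=t_n$. Combined with the Chebyshev step $\int[G_{h_{n+1}}(u)]^r\le(h_{n+1}-\bar h_n)^{r-s}\int[G_{\bar h_n}(u)]^s$, this yields the genuine recursion $Y_{n+1}\le c^{(s-r)n}C_0^{r-s}t^{-\beta+1}Y_n^\lambda$ for $Y_n:=\sup_{t_n<\tau<t}\int[G_{h_n}(u)]^r$, which is \emph{decreasing}. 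The standard convergence lemma then forces $Y_n\to 0$ provided $C_0^{r-s}t^{-\beta+1}Y_0^{\lambda-1}\le c^{r-s}$, and solving this threshold for $C_0$ gives the claimed bound directly in terms of $\sup_{t/4<\tau<t}\int u^r$, with the auxiliary exponent $s$ indeed cancelling. Your final paragraph about the $s$-cancellation and Barenblatt scaling is correct, but to reach it you need this second layer of varying levels and time cutoffs, which your fixed choices $a_1=a$, $a_2=a/2$, $\tau_1=t/2$, $\tau_2=t/4$ do not provide. A related symptom: your plan to control $J_0$ (an $L^s$-quantity with $s>r$) by the $L^r$-sup via ``a final H\"older interpolation'' would require a factor of $\|u\|_{L^\infty}$, which is precisely what you are trying to bound.
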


\begin{proof}
We define, for any $i\in\mathbb{N}$, $s>1$,  
\begin{equation}\label{eq323}
J_i:=\iint_{U_{i}}\left|\nabla[G_{k_{i+1}}(u)]^{\frac{m(p-1)+s-1}{p}}\right|^{p}\,d\mu\,dt,
\end{equation}
where $G_{k_i}$, $\{k_i\}_{i\in\mathbb{N}}$ and $U_i$ have been defined in \eqref{31b}, \eqref{eq32} and \eqref{eq32b} respectively. We  fix $1\le r<s $ and we define
\begin{equation}\label{eq324}
\beta:=\frac{N[m(p-1)+s-1-r]+p r}{N[m(p-1)-1]+p r}.
\end{equation}
Observe that   $\beta>1$ due to assumption \eqref{hp} and the choice $r<s$. By repeatedly applying   Lemma \ref{lemma2} and recalling \eqref{eq323}, we can write, for any $i\in\mathbb{N}$, the following iteration 
\begin{equation}\label{eq325}
\begin{aligned}
\sup_{\tau_1<\tau<t}&\int_{B_R}[G_{k_0}(u)]^s\,d\mu + J_0\\
&\le \varepsilon J_1+ C(\varepsilon)(2\gamma C_1)^{\beta}(t-\tau_2)\left(\sup_{\tau_2<\tau<t}\int_{B_R}[G_{k_{\infty}}(u)]^r\,d\mu\right)^{\lambda}\\
&\le \varepsilon \left\{\varepsilon J_2+ C(\varepsilon)(2^2\gamma C_1)^{\beta}(t-\tau_2)\left(\sup_{\tau_2<\tau<t}\int_{B_R}[G_{k_{\infty}}(u)]^r\,d\mu\right)^{\lambda}\right\} \\
&\quad\quad+ C(\varepsilon)(2\gamma C_1)^{\beta}(t-\tau_2)\left(\sup_{\tau_2<\tau<t}\int_{B_R}[G_{k_{\infty}}(u)]^r\,d\mu\right)^{\lambda}\\
&\le \ldots\\
&\le\varepsilon^{i} J_{i} +\sum_{j=0}^{i-1}(2^{\beta}\varepsilon)^{j}(2\gamma C_1)^\beta\,C(\varepsilon)(t-\tau_2)\left(\sup_{\tau_2<\tau<t}\int_{B_R}[G_{k_{\infty}}(u)]^r\,d\mu\right)^{\lambda},
\end{aligned}
\end{equation}
where we recall that 
\begin{equation}\label{lam2}
\lambda=\frac{N[m(p-1)-1]+p s}{N[m(p-1)-1]+p r}.
\end{equation}
We now fix $\varepsilon>0$ such that $2^\beta\varepsilon <\frac 12$. We take the limit as $i\longrightarrow+\infty$ in \eqref{eq325} thus we have
\begin{equation}\label{eq326}
\sup_{\tau_1<\tau<t}\int_{B_R}[G_{k_0}(u)]^s\,d\mu\,\le\,\tilde C(2\gamma C_1)^{\beta}(t-\tau_2)\left(\sup_{\tau_2<\tau<t}\int_{B_R}[G_{k_{\infty}}(u)]^r\,d\mu\right)^{\lambda},
\end{equation}
for some $\tilde C>1$. Observe that, due to the definition of the sequence $\{k_i\}_{i\in\mathbb{N}}$ in \eqref{eq32}, one has
$$
\begin{aligned}
&k_0=a_1\,,\quad\quad \quad \quad\quad \quad  k_{\infty}=a_2\,;\\
&G_{k_0}(u)=G_{a_1}(u)\,, \quad\quad G_{k_\infty}(u)=G_{a_2}(u)\,.
\end{aligned}
$$
For $n\in\mathbb{N}\cup\{0\}$, we consider, for some $C_0>0$ that will be fixed later, the following sequences
\begin{equation}\label{eq327}
\begin{aligned}
&t_n=\frac 12 t(1-2^{-n-1})\,;\\
&h_n=C_0(1-2^{-n-1})\,;\\
&\overline{h}_n=\frac 12(h_n+h_{n+1})\,.
\end{aligned}
\end{equation}

\begin{center}
	\begin{figure}

\begin{tikzpicture}
	\coordinate (hn) at (0,0);
	\coordinate (hnbar) at (1.5,0); 
	\coordinate (hn+1) at (3,0);
	\coordinate (C0) at (5,0);
	
	\draw (-1.5, 0) -- (6.5, 0);
	
	\filldraw[black] (hn) circle (1.5pt);
	\filldraw[black] (hnbar) circle (1.5pt);
	\filldraw[black] (hn+1) circle (1.5pt);
	\filldraw[black] (C0) circle (1.5pt);
	
	\node[below=2pt] at (hn) {$h_n$};
	\node[below=2pt] at (hnbar) {$\overline{h}_n$}; 
	
	\node[below=2pt] at (hn+1) {$h_{n+1}$};
	\node[below=2pt] at (C0) {$C_0$};

	\draw (hn) -- (hn+1);

	\draw[dashed] (-3, 0) -- (hn);
	\draw[dashed] (hn+1) -- (8, 0);
\end{tikzpicture}
	\caption{Constants approximating $C_0$}
\end{figure}
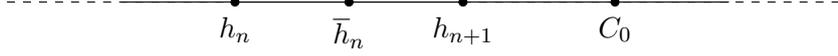
\end{center}

In \eqref{eq326} we set
\begin{equation}\label{eq328}
\tau_1=t_{n+1}\,;\quad \tau_2=t_n\,;\quad a_1=\overline h_n\,;\quad a_2=h_n\,.
\end{equation}
Then, by using \eqref{eq327} and \eqref{eq328} and the fact that, for any $t\in(0,T)$, $S(t)\le1$, we can say that there exists $C_2>1$ such that
$$
\tilde C \left(2\gamma\,C_1\right)^{\beta}\le \frac{C_2^n}{t}.
$$
Due to the latter and \eqref{eq328}, \eqref{eq326} reads
\begin{equation}\label{eq329}
\sup_{t_{n+1}<\tau<t}\int_{B_R}[G_{\overline h_n}(u)]^s\,d\mu\,\le\,C_2^{n}\,t^{-\beta+1}\left(\sup_{t_n<\tau<t}\int_{B_R}[G_{h_{n}}(u)]^r\,d\mu\right)^{\lambda},
\end{equation}
with $\beta$ and $\lambda$ as in \eqref{eq324}, \eqref{lam2}, respectively. Furthermore, observe that
\begin{equation}\label{eq330}
\int_{B_R}[G_{h_{n+1}}(u)]^r\,d\mu\,\le(h_{n+1}-\overline h_n)^{r-s}\int_{B_R}\left[G_{\overline h_n}(u)\right]^s\,d\mu.
\end{equation}
By combining together \eqref{eq329} and \eqref{eq330}, we derive the recurrent inequality
\begin{equation}\label{eq331}
\begin{aligned}
\sup_{t_{n+1}<\tau<t}\int_{B_R}[G_{h_{n+1}}(u)]^r\,d\mu\,&\le (h_{n+1}-\overline h_n)^{r-s}\sup_{t_{n+1}<\tau<t}\int_{B_R}\left[G_{\overline h_n}(u)\right]^s\,d\mu\\
&\le (h_{n+1}-\bar h_n)^{r-s}\,C_2^{n}\,t^{-\beta+1}\left(\sup_{t_n<\tau<t}\int_{B_R}[G_{h_{n}}(u)]^r\,d\mu\right)^{\lambda}.
\end{aligned}
\end{equation}
We define
$$
Y_n:=\sup_{t_n<\tau<t}\int_{B_R}[G_{h_{n}}(u)]^r\,d\mu,
$$
thus, by using \eqref{eq324}, \eqref{lam2} and \eqref{eq327}, \eqref{eq331} reads, 
\begin{equation*}
\begin{aligned}
Y_{n+1}&\le C_2^{n}\,2^{(n+3)(s-r)}\,C_0^{r-s}\,t^{-\beta+1}\,Y_n^{\lambda}\,\\
&\le c^{(s-r)n}\,C_0^{r-s}\,t^{-\beta+1}\,Y_n^{\lambda}\,,
\end{aligned}
\end{equation*}
for some $c>1$. From \cite[Chapter 2, Lemma 5.6]{LSU} it follows that
\begin{equation}\label{eq333}
Y_n\longrightarrow 0\quad\text{as}\,\,\,n\to+\infty,
\end{equation}
provided that 
\begin{equation}\label{eq334}
C_0^{r-s}\,t^{-\beta+1}\,Y_0^{\lambda-1}\le c^{r-s},
\end{equation}
Now, \eqref{eq333}, in turn, means
$$
\|u\|_{L^{\infty}\left(B_R\times\left(\frac t2,t\right)\right)}\,\le\, C_0,
$$
and, \eqref{eq334} is fulfilled if, e.g.
$$
\begin{aligned}
C_0&\le c \,t^{\frac{-\beta+1}{s-r}}\,Y_0^{\left(\lambda-1\right)\left(\frac1{s-r}\right)}\\
&\le c\,t^{-\frac{N}{N[m(p-1)-1]+p r}}\left[\sup_{\frac t4<\tau<t}\int_{B_R}u^r\,d\mu\right]^{\frac{p }{N[m(p-1)-1]+p r}},
\end{aligned}
$$ 
where we have used the definitions of $\beta$ and $\lambda$ in \eqref{eq324} and \eqref{lam2}. This concludes the proof.
\end{proof}

\subsection{An extra $L^s-L^s$ estimate}
\begin{lemma}\label{lemma41}
Assume \eqref{hp} and, besides, that  $q>m(p-1)+\frac{p}{N}.$ Assume that inequality \eqref{S} holds. Suppose that $u_0\in L^{\infty}(B_R)$, $u_0\ge0$. Let $1< s<\infty$ and assume that
\begin{equation}\label{eq41bis}
 \|u_0\|_{\textrm L^{q_0}(B_R)}\,<\,\bar\varepsilon
\end{equation}
with $\bar\varepsilon=\bar\varepsilon(s,p,m,q,\mathcal S_{p})>0$ sufficiently small and $q_0$ defined in \eqref{q0}. Let $u$ be the solution of problem \eqref{eq111} in the sense of Definition \ref{def31}, and assume that $u\in C([0,T], L^s(B_R))$ for any $s\in (1,+\infty)$, for any $T>0$. Then
\begin{equation*}
\|u(t)\|_{L^s(B_R)} \le \|u_0\|_{L^s(B_R)}\quad \textrm{ for all }\,\, t>0\,.
\end{equation*}
\end{lemma}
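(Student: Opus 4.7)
My strategy is to derive a differential inequality for $\|u(t)\|_{L^s(B_R)}^s$ by testing the equation with a power of $u$, then use the Sobolev inequality on the left-hand side and a H\"older split on the right-hand side so that the reaction term is exactly controlled by the diffusion term multiplied by the factor $\|u\|_{L^{q_0}}^{q-m(p-1)}$. A two-step continuity-bootstrap argument will then propagate the smallness of $\|u_0\|_{L^{q_0}}$ in time. This propagation is the main obstacle: the differential inequality for the generic exponent $s$ only closes under a smallness condition on the \emph{$L^{q_0}$} norm, and this smallness has to first be secured by running exactly the same estimate with the distinguished exponent $s=q_0$.

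The starting point is to plug $f(u)=u^{s-1}$ (and $\xi\equiv 1$) into the integrated identity \eqref{intbyparts}; this is legitimate because the elliptic regularization $\varepsilon\Delta u$ and the bounded Lipschitz nonlinearity $T_k(u^q)$ make $u$ smooth and $u\in L^\infty$. Dropping the non-negative contribution $\varepsilon(s-1)\int u^{s-2}|\nabla u|^2$, using $T_k(u^q)\le u^q$, and computing the $p$-Laplacian term as in \eqref{eq38}, one arrives in differentiated form at
\begin{equation*}
\frac{1}{s}\frac{d}{dt}\int_{B_R} u^s\,d\mu \;+\; \kappa_{m,p,s}\int_{B_R} \bigl|\nabla u^{\alpha}\bigr|^{p}\,d\mu \;\le\; \int_{B_R} u^{q+s-1}\,d\mu,
\end{equation*}
where $\alpha:=\tfrac{m(p-1)+s-1}{p}$ and $\kappa_{m,p,s}:=m^{p-1}(s-1)\bigl(\tfrac{p}{m(p-1)+s-1}\bigr)^{p}>0$. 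Applying Sobolev \eqref{S} to $u^{\alpha}$ bounds the diffusion from below by $\mathcal{S}_{p}^{p}\|u\|_{L^{\alpha p^{*}}}^{\alpha p}$. On the right I use H\"older with the conjugate exponents $N/p$ and $N/(N-p)$, whose choice is dictated by the algebraic identities $q_{0}\cdot\tfrac{p}{N}=q-m(p-1)$ and $\alpha p^{*}\cdot\tfrac{N-p}{N}=m(p-1)+s-1=\alpha p$, producing
\begin{equation*}
\int_{B_R} u^{q+s-1}\,d\mu \;\le\; \|u\|_{L^{q_{0}}(B_R)}^{\,q-m(p-1)}\;\|u\|_{L^{\alpha p^{*}}(B_R)}^{\,m(p-1)+s-1}.
\end{equation*}
Substituting, the common factor $\|u\|_{L^{\alpha p^{*}}}^{m(p-1)+s-1}$ can be collected on one side and I obtain
\begin{equation*}
\frac{d}{dt}\|u(t)\|_{L^s(B_R)}^{s} \;\le\; s\Bigl(\|u(t)\|_{L^{q_{0}}(B_R)}^{\,q-m(p-1)}-\kappa_{m,p,s}\mathcal{S}_{p}^{p}\Bigr)\|u(t)\|_{L^{\alpha p^{*}}(B_R)}^{\,m(p-1)+s-1}.
\end{equation*}

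To close the argument I first specialize to $s=q_{0}$, which is admissible because the hypothesis $q>m(p-1)+p/N$ forces $q_{0}>1$. Choose $\bar\varepsilon$ small enough that $\bar\varepsilon^{\,q-m(p-1)}<\kappa_{m,p,q_{0}}\mathcal{S}_{p}^{p}$, and set $T^{*}:=\sup\{t\ge 0\colon \|u(\tau)\|_{L^{q_{0}}}^{\,q-m(p-1)}\le \kappa_{m,p,q_{0}}\mathcal{S}_{p}^{p}\text{ for every }\tau\in[0,t]\}$; by the continuity $u\in C([0,T];L^{q_{0}})$ and the strict initial inequality, $T^{*}>0$. On $[0,T^{*}]$ the above inequality with $s=q_{0}$ is non-positive, hence $\|u(t)\|_{L^{q_{0}}}\le\|u_{0}\|_{L^{q_{0}}}<\bar\varepsilon$, so the defining constraint is strictly satisfied at $T^{*}$; a standard continuation argument then forces $T^{*}=+\infty$, yielding $\|u(t)\|_{L^{q_{0}}(B_R)}\le\|u_{0}\|_{L^{q_{0}}(B_R)}$ for all $t>0$. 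Taking $\bar\varepsilon$ possibly smaller so that in addition $\bar\varepsilon^{\,q-m(p-1)}\le\kappa_{m,p,s}\mathcal{S}_{p}^{p}$ for the prescribed exponent $s$, the parenthesis in the general differential inequality is non-positive for every $t>0$ and integration yields $\|u(t)\|_{L^{s}(B_R)}\le\|u_{0}\|_{L^{s}(B_R)}$, which is the desired conclusion.
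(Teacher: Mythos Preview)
Your proof is correct and follows essentially the same approach as the paper: both test with $u^{s-1}$, apply Sobolev to the diffusion term and H\"older with exponents $N/p$, $N/(N-p)$ to the reaction term, arriving at the same differential inequality with the factor $\|u(t)\|_{L^{q_0}}^{q-m(p-1)}-\kappa_{m,p,s}\mathcal S_p^{p}$. The only cosmetic difference is in the continuity argument: the paper uses a step-by-step iteration (show the $L^s$ norm is decreasing on $[0,t_0]$ via the bound $\|u\|_{L^{q_0}}\le 2\bar\varepsilon$, specialize to $s=q_0$ to recover the strict bound at $t_0$, then repeat on $[t_0,2t_0]$, \ldots), whereas you run the $s=q_0$ case first with a maximal-time $T^*$ argument and then feed the global $L^{q_0}$ bound into the general $s$ inequality---these are equivalent standard bootstraps.
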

Note that the request $u\in C([0,T],L^s(B_R))$ for any $s\in(1,\infty)$, for any $T>0$ is not restrictive, since we will construct solutions belonging to that class.
\begin{proof}
Since $u_0$ is bounded and $T_k(u^{\s})$ is a bounded and Lipschitz function, by standard results, there exists a unique solution of problem \eqref{eq111} in the sense of Definition \ref{def31}. For any $i\in\mathbb{N}$, after a suitable regularization argument, one is allowed to take  $u^{s-1}$ ($s>1$) as a test function in \eqref{intbyparts}. For every $t\in(0, T]$ we get
\begin{equation}\label{eq43bis}
\begin{aligned}
&\frac 1s\int_{B_R}u^{s}(t)\,d\mu =\int_{B_R}\left|\nabla u^m(t)\right|^{p-2}\langle \nabla u^m(t),\,\nabla u^{s-1}(t)\rangle\,d\mu \\ &
+\varepsilon\int_{B_R}\langle\nabla u(t),\,\nabla[u^{s-1}(t)]\rangle\xi_i\,d\mu  
= \int_{B_R}  T_k(u^{q})\,u^{s-1}(t)\,d\mu \,.
\end{aligned}
\end{equation} 
Observe that, thanks to Sobolev inequality \eqref{S}, we have
\begin{equation}\label{eq43tris}
\begin{aligned}
\int_{B_R}&\left|\nabla u^m(t)\right|^{p-2}\langle \nabla u^m(t),\,\nabla u^{s-1}(t)\rangle\,d\mu\\
&=m^{p-1}(s-1)\int_{B_R} u(t)^{(m-1)(p-1)+s-2 } \left|\nabla u(t)\right|^{p}\,d\mu\\
&=m^{p-1}(s-1)\left(\frac{p}{m(p-1)+s-1}\right)^p\int_{B_R}\left|\nabla u(t)^{\frac{ m(p-1)+s-1}{p}}\right|^{p}\,d\mu \\
&\ge m^{p-1}(s-1)\left(\frac{p}{m(p-1)+s-1}\right)^p\mathcal S_{p}^p\left( \int_{B_R}  u(t)^{\frac{ m(p-1)+s-1}{p}\frac{pN}{N-p}}\,d\mu \right)^{\frac{N-p}{N}}\,.
\end{aligned}
\end{equation}
Moreover, by using the H{\"o}lder inequality with exponents $\frac{N}{N-p}$ and $\frac{N}{p}$ and since $q>q_0$ with $q_0$ as in \eqref{q0}, the last term in the right hand side of \eqref{eq43bis} becomes
\begin{equation}\label{eq43c}
\begin{aligned}
\int_{B_R} T_k(u^{q})\,u(t)^{s-1}\,dx&\le \int_{B_R} u^{q}\,u(t)^{s-1}\,dx = \int_{B_R}  u(t)^{q-m(p-1)}\,u(t)^{m(p-1)+s-1}\,dx \\
&\le \|u(t)\|^{q-m(p-1)}_{L^{(q-m(p-1))\frac{N}{p}}(B_R)} \|u(t)\|^{m(p-1)+s-1}_{L^{(m(p-1)+s-1)\frac{N}{N-p}}(B_R)}\\
&=\|u(t)\|^{q-m(p-1)}_{L^{q_0}(B_R)} \|u(t)\|^{m(p-1)+s-1}_{L^{(m(p-1)+s-1)\frac{N}{N-p}}(B_R)}\,.
\end{aligned}
\end{equation}
Combining \eqref{eq43tris} and \eqref{eq43c} we get
\begin{equation}\label{eq44bis}
\begin{aligned}
\frac{1}{q}\frac{d}{dt} \|u(t)\|^s_{L^s(B_R)}\le &-\left[m^{p-1}(s-1)\left(\frac{p}{m(p-1)+s-1}\right)^p\mathcal S_{p}^p-\|u(t)\|^{q-m(p-1)}_{L^{q_0}(B_R)} \right] \\
&\|u(t)\|^{m(p-1)+s-1}_{L^{(m(p-1)+s-1)\frac{N}{N-p}}(B_R)}\,.
\end{aligned}
\end{equation}
Take $T>0$. Observe that, due to hypotheses \eqref{eq41bis} and the known continuity in $L^{q_0}$ of the map $t\mapsto u(t)$ in $[0,T]$, there exists $t_0>0$ such that
$$
\|u(t)\|_{L^{q_0}(B_R)}\le 2\, \bar\varepsilon\,\,\,\,\,\text{for any}\,\,\,\, t\in [0,t_0]\,.
$$
Hence \eqref{eq44bis} becomes, for any $t\in (0,t_0]$,
$$
\begin{aligned}
\frac{1}{q}\frac{d}{dt} \|u(t)\|^s_{L^s(B_R)}\le &-\left[m^{p-1}(s-1)\left(\frac{p}{m(p-1)+s-1}\right)^p\mathcal S_{p}^p-(2\bar\varepsilon)^{q-m(p-1)} \right] \|u(t)\|^{m(p-1)+s-1}_{L^{(m(p-1)+s-1)\frac{N}{N-p}}(B_R)}\,.
\end{aligned}
$$
where the last inequality is obtained by using \eqref{eq41bis}. We have proved that $t\mapsto \|u(t)\|_{L^s(B_R)}$ is decreasing in time for any $t\in (0,t_0]$, thus
\begin{equation}\label{eq45bis}
\|u(t)\|_{L^s(B_R)}\le \|u_0\|_{L^s(B_R)}\quad \text{for any} \,\,\,t\in (0,t_0]\,.
\end{equation}
In particular, inequality \eqref{eq45bis} follows for the choice $s=q_0$ in view of hypothesis \eqref{eq41bis}. Hence we have
$$
\|u(t)\|_{L^{q_0}(B_R)}\le \|u_0\|_{L^{q_0}(B_R)}\,<\,\bar\varepsilon \quad \text{for any} \,\,\,\,t\in (0,t_0]\,.
$$
Now, we can repeat the same argument in the time interval $(t_0, t_1]$, with $t_1=2t_0$. This can be done due to the uniform continuity of the map $t\mapsto u(t)$ in $[0,T]$. Hence, we can write that
$$
\|u(t)\|^{q-m(p-1)}_{L^{q_0}(B_R)}\le 2\, \bar\varepsilon\,\,\,\,\,\text{for any}\,\,\, t\in (t_0,t_1]\,.
$$
Thus we get
\begin{equation*}
\|u(t)\|_{L^s(B_R)}\le \|u_0\|_{L^s(B_R)}\quad \text{for any} \,\,\,t\in (0,t_1]\,.
\end{equation*}
Iterating this procedure we obtain that $t\mapsto\|u(t)\|_{L^s(B_R)}$ is decreasing in $[0,T]$. Since $T>0$ was arbitrary, the thesis follows.
\end{proof}

\subsection{Auxiliary results when Poincar\'e inequality holds true}
In this section we furthermore assume that inequality \eqref{P} holds true and we show some local smoothing estimates.  
\begin{lemma}\label{lemma71}
Assume \eqref{hp} and, besides, that $m(p-1)>1$. Assume that inequalities \eqref{S} and \eqref{P} hold. Suppose that $u_0\in L^{\infty}(B_R)$, $u_0\ge0$. Let $1<s<\infty$ and assume that
\begin{equation}\label{eq713}
\|u_0\|_{L^{q\frac N{p}}(B_R)}<\varepsilon_1
\end{equation}
for a suitable $\varepsilon_1= \varepsilon_1(q, p, m, N, {\mathcal C}_p, \mathcal{S}_{p}, s)$ sufficiently small. Let $u$ be the solution of problem \eqref{eq111} in the sense of Definition \ref{def31}, such that in addition $u\in C([0, T); L^s(B_R))$. Then
\begin{equation}\label{eq72}
\|u(t)\|_{L^s(B_R)} \le \|u_0\|_{L^s(B_R)}\quad \textrm{ for all }\,\, t>0\,.
\end{equation}
\end{lemma}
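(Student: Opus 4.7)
The plan is to mirror the argument used for Lemma~\ref{lemma41}, with the crucial difference that the restrictive condition $q>m(p-1)+p/N$ (which was used there to close the H\"older inequality via Sobolev alone) is replaced by an interpolation between the Poincar\'{e} inequality \eqref{P} and the Sobolev inequality \eqref{S}. As in that proof, I would use $u^{s-1}$ as test function in \eqref{intbyparts} (with $\xi\equiv 1$), discard the non-negative viscosity contribution, and invoke the usual chain-rule manipulation (as in \eqref{eq43tris}) to obtain
\begin{equation*}
\frac{1}{s}\frac{d}{dt}\|u(t)\|_{L^s(B_R)}^s + \Lambda\,\|\nabla\phi(t)\|_{L^p(B_R)}^p \le \int_{B_R} u^{q+s-1}(t)\,d\mu,
\end{equation*}
where $\phi:=u^{(m(p-1)+s-1)/p}$ and $\Lambda:=m^{p-1}(s-1)\bigl(p/(m(p-1)+s-1)\bigr)^p$.

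To handle the reaction, I would split $u^{q+s-1}=u^{q-m(p-1)}\,u^{m(p-1)+s-1}$ and apply H\"older with the first factor landing in $L^{qN/p}(B_R)$. This forces the conjugate exponent $b=qN/[q(N-p)+pm(p-1)]$, giving
\begin{equation*}
\int_{B_R} u^{q+s-1}\,d\mu \le \|u\|_{L^{qN/p}(B_R)}^{q-m(p-1)}\,\|\phi\|_{L^{pb}(B_R)}^{p}.
\end{equation*}
A short computation using $q>m(p-1)$ and $p<N$ shows that $p\le pb\le p^*$. Interpolating (by log-convexity of the $L^r$-norms),
\begin{equation*}
\|\phi\|_{L^{pb}(B_R)} \le \|\phi\|_{L^p(B_R)}^{\theta}\|\phi\|_{L^{p^*}(B_R)}^{1-\theta} \le \mathcal{K}\,\|\nabla\phi\|_{L^p(B_R)},
\end{equation*}
for some $\theta\in[0,1]$ and a constant $\mathcal{K}=\mathcal{K}(p,N,q,m,s,\mathcal{C}_p,\mathcal{S}_p)$, thanks to both \eqref{S} and \eqref{P}. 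This is the crucial gain over Lemma~\ref{lemma41}: the Poincar\'{e} inequality opens up the full interpolation range, letting the argument reach the sharp endpoint $q>m(p-1)$.

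Combining the two bounds, the energy inequality becomes
\begin{equation*}
\frac{1}{s}\frac{d}{dt}\|u(t)\|_{L^s(B_R)}^s \le \bigl(\mathcal{K}^p\,\|u(t)\|_{L^{qN/p}(B_R)}^{q-m(p-1)}-\Lambda\bigr)\|\nabla\phi(t)\|_{L^p(B_R)}^p,
\end{equation*}
so $t\mapsto\|u(t)\|_{L^s(B_R)}$ is non-increasing as long as $\|u(t)\|_{L^{qN/p}(B_R)}<(\Lambda/\mathcal{K}^p)^{1/(q-m(p-1))}$. To secure this on $[0,T]$, I would bootstrap exactly as in Lemma~\ref{lemma41}: first run the same argument with $s=qN/p$; by continuity of $t\mapsto u(t)$ in $L^{qN/p}(B_R)$ and the smallness hypothesis \eqref{eq713}, on a short initial interval $[0,t_0]$ one has $\|u(t)\|_{L^{qN/p}(B_R)}\le 2\varepsilon_1$; choose $\varepsilon_1$ so that $\mathcal{K}^p(2\varepsilon_1)^{q-m(p-1)}<\Lambda$; monotonicity then follows on $[0,t_0]$, which in particular preserves the smallness at $t=t_0$; iterating on contiguous intervals exhausts $[0,T]$, and the general $s$-case then yields \eqref{eq72}.

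The main obstacle is the interpolation step: one must verify that $pb$ genuinely lies in the full range $[p,p^*]$ (a calculation that breaks down precisely when $q<m(p-1)$, explaining the sharpness of the new threshold) and track the constant $\mathcal{K}$ carefully, since the admissible size of $\varepsilon_1$ is determined by it. Beyond that, the proof is a routine adaptation of the continuity/bootstrap scheme already used for Lemma~\ref{lemma41}.
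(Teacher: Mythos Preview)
Your proof is correct and takes a cleaner route than the paper's. Both start from the same energy identity with $u^{s-1}$ as test function and close via the same continuity/bootstrap scheme in the $L^{qN/p}$ norm; the difference lies in how the reaction term $\int u^{q+s-1}$ is absorbed into the gradient term. You apply a single H\"older splitting $u^{q+s-1}=u^{q-m(p-1)}\phi^p$, land $\phi$ in $L^{pb}$ with $p\le pb\le p^*$, and then interpolate $\|\phi\|_{L^{pb}}$ directly between Poincar\'e and Sobolev to get $\|\phi\|_{L^{pb}}\le \mathcal K\|\nabla\phi\|_{L^p}$. The paper instead writes $\|\nabla\phi\|_{L^p}^p=c_1\|\nabla\phi\|_{L^p}^p+c_2\|\nabla\phi\|_{L^p}^p$, applies Poincar\'e fully to the $c_1$ piece and only partially (to a power $p\alpha$) to the $c_2$ piece, and treats the reaction via a more elaborate interpolation/H\"older/Sobolev chain with $\theta=m(p-1)[m(p-1)+s-1]/[q(q+s-1)]$; the free parameter $\alpha=m(p-1)/q$ is then tuned so that the $c_2$ term exactly cancels the reaction. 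Your argument is more transparent for the lemma as stated. The paper's extra structure is not wasted, however: after the cancellation it retains the surviving negative term $-\,c_1\,\mathcal C_p^{\,p}\,\Lambda\,\|u\|_{L^{m(p-1)+s-1}}^{\,m(p-1)+s-1}$ on the right-hand side, and this residual drives the $L^{s_0}\!\to L^s$ smoothing estimate in the subsequent Proposition~\ref{prop71}. So the paper's decomposition is set up with that later application in mind, whereas your simpler argument suffices for \eqref{eq72} alone.
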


\begin{proof}
 We use $u^{s-1}$ to test \eqref{intbyparts} and, due to the arbitrarieness of $t_1$ and $t_2$,  we get   
\begin{equation}\label{eq73}
\begin{aligned}
\frac{1}{s}\frac{d}{dt}\int_{B_R} u^{s}\,d\mu &\le-(s-1)\left(\frac{p}{m(p-1)+s-1}\right)^p\int_{B_R} \left|\nabla\left( u^{\frac{m(p-1)+s-1}{p}}\right)\right|^p \,d\mu\,\\&-\varepsilon(s-1)\int_{B_R}u^{s-2}|\nabla u|^2\,d\mu+ \int_{B_R} u^{q+s-1}\,d\mu, 
\end{aligned}
\end{equation}
where we also used that $T_k(u^q)\le u^q$.

Now we take $c_1>0$, $c_2>0$ such that $c_1+c_2=1$ so that
\begin{equation}\label{eq79}
\begin{aligned}
\int_{B_R}& \left|\nabla\left( u^{\frac{m(p-1)+s-1}{p}}\right)\right|^p d\mu \\
&= c_1\left\|\nabla\left( u^{\frac{m(p-1)+s-1}{p}}\right)\right\|_{L^p(B_R)}^p \, + c_2\left\|\nabla\left( u^{\frac{m(p-1)+s-1}{p}}\right)\right\|_{L^p(B_R)}^p\,.
\end{aligned}
\end{equation}
Take $\alpha\in (0,1)$. Thanks to \eqref{P}, \eqref{eq79} we get
\begin{equation}\label{eq74}
\begin{aligned}
\int_{B_R} &\left|\nabla\left( u^{\frac{m(p-1)+s-1}{p}}\right)\right|^p \,d\mu \ge  c_1\, \mathcal{C}_{p}^p \left\| u\right\|^{m(p-1)+s-1}_{L^{m(p-1)+s-1}(B_R)}\, + c_2\, \left\|\nabla\left( u^{\frac{m(p-1)+s-1}{p}}\right)\right\|_{L^p(B_R)}^p\\
&\ge c_1 \mathcal{C}_{p}^p \left\| u\right\|^{m(p-1)+s-1}_{L^{m(p-1)+s-1}(B_R)}\, +c_2 \left\|\nabla\left( u^{\frac{m(p-1)+s-1}{p}}\right)\right\|_{L^p(B_R)}^{p+p\alpha-p\alpha}\\
&\ge c_1 \mathcal{C}_{p}^p \left\| u\right\|^{m(p-1)+s-1}_{L^{m(p-1)+s-1}(B_R)}\, + c_2 \mathcal{C}_{p}^{p\alpha} \left\| u\right\|^{\alpha(m(p-1)+s-1)}_{L^{m(p-1)+s-1}(B_R)} \left\|\nabla\left( u^{\frac{m(p-1)+s-1}{p}}\right)\right\|_{L^p(B_R)}^{p-p\alpha}.
\end{aligned}
\end{equation}
Moreover, using the interpolation inequality, H\"{o}lder inequality and \eqref{S}, we have
\begin{equation}\label{eq75}
\begin{aligned}
\int_{B_R}  &u^{q+s-1}\,d\mu=\|u\|_{L^{q+s-1}(B_R)}^{q+s-1}\\
&\le \|u\|_{L^{m(p-1)+s-1}(B_R)}^{\theta(q+s-1)}\,\|u\|_{L^{q+m(p-1)+s-1}(B_R)}^{(1-\theta)(q+s-1)}\\
&\le \|u\|_{L^{m(p-1)+s-1}(B_R)}^{\theta(q+s-1)}\left[\|u\|_{L^{q\frac{N}{p}}(B_R)}^{q}\|u\|_{L^{(m(p-1)+s-1)\frac{N}{N-p}}(B_R)}^{m(p-1)+s-1}\right]^{\frac{(1-\theta)(q+s-1)}{q+m(p-1)+s-1}}\\
&\le \|u\|_{L^{m(p-1)+s-1}(B_R)}^{\theta(q+s-1)}\|u\|_{L^{q\frac{N}{p}}(B_R)}^{(1-\theta)\frac{q(q+s-1)}{q+m(p-1)+s-1}} \left(\frac{1}{\mathcal{S}_{p}}\left\|\nabla \left(u^{\frac{m(p-1)+s-1}{p}}\right)\right\|_{L^p(B_R)}\right)^{p(1-\theta)\frac{q+s-1}{q+m(p-1)+s-1}}
\end{aligned}
\end{equation}
where $\theta:=\frac{m(p-1)[m(p-1)+s-1]}{q(q+s-1)}$. By plugging \eqref{eq74} and \eqref{eq75} into \eqref{eq73} we obtain

\begin{equation}\label{eq710}
\begin{aligned}
\frac{1}{s}\frac{d}{dt}&\|u(t)\|_{L^s(B_R)}^{s}
 \le-(s-1)\left(\frac{p}{m(p-1)+s-1}\right)^p c_1\, \mathcal{C}_{p}^p \left\| u\right\|^{m(p-1)+s-1}_{L^{m(p-1)+s-1}(B_R)}\, \\
& - (s-1)\left(\frac{p}{m(p-1)+s-1}\right)^p c_2\, \mathcal{C}_{p}^{p\alpha} \left\| u\right\|^{\alpha(m(p-1)+s-1)}_{L^{m(p-1)+s-1}(B_R)} \left\|\nabla\left( u^{\frac{m(p-1)+s-1}{p}}\right)\right\|_{L^p(B_R)}^{p-p\alpha} \\
& +\tilde{C}\|u\|_{L^{m(p-1)+s-1}(B_R)}^{\theta(q+s-1)}\,\|u\|_{L^{q\frac{N}{p}}(B_R)}^{(1-\theta)\frac{q(q+s-1)}{q+m(p-1)+s-1}} \|\nabla \left(u^{\frac{m(p-1)+s-1}{p}}\right)\|_{L^p(B_R)}^{p(1-\theta)\frac{q+s-1}{q+m(p-1)+s-1}},
\end{aligned}
\end{equation}
where \begin{equation}\label{eq712}
\tilde{C}=\left(\frac{1}{\mathcal{S}_{p}}\right)^{p(1-\theta)\frac{q+s-1}{q+m(p-1)+s-1}}.
\end{equation}
Let us now fix $\alpha\in (0,1)$ such that
$$
p-p\alpha=p(1-\theta)\frac{q+s-1}{q+m(p-1)+s-1}.
$$
Hence, we have
\begin{equation}\label{eq76}
\alpha=\frac{m(p-1)}{q}.
\end{equation}
By substituting \eqref{eq76} into \eqref{eq710} we obtain
\begin{equation}\label{eq77}
\begin{aligned}
\frac{1}{s}\frac{d}{dt}&\|u(t)\|_{L^s(B_R)}^{s} \le -(s-1)\left(\frac{p}{m(p-1)+s-1}\right)^p c_1\, \mathcal{C}_{p}^p \left\| u\right\|^{m(p-1)+s-1}_{L^{m(p-1)+s-1}(B_R)}\\
& -  \frac{1}{\tilde C}\left\{ (s-1)\left(\frac{p}{m(p-1)+s-1}\right)^pC  - \|u\|_{L^{q\frac{N}{p}}(B_R)}^{\frac{q(q+s-1)-m(p-1)[m(p-1)+s-1]}{q+m(p-1)+s-1}}\right\}  \\
&\times\left\| u\right\|^{\alpha[m(p-1)+s-1]}_{L^{m(p-1)+s-1}(B_R)} \left\|\nabla\left( u^{\frac{m(p-1)+s-1}{p}}\right)\right\|_{L^p(B_R)}^{p-p\alpha},
\end{aligned}
\end{equation}
for some $C>0$. Observe that, due to hypotheses \eqref{eq713} and by the continuity of the solution $u(t)$, there exists $t_0>0$ such that
$$
\left\| u(t)\right\|_{L^{q\frac N{p}}(B_R)}\le 2\, \varepsilon_1\,\,\,\,\,\text{for any}\,\,\,\, t\in (0,t_0]\,.
$$
Hence, \eqref{eq77} becomes, for any $t\in (0,t_0]$
\begin{equation*}
\begin{aligned}
\frac{1}{s}\frac{d}{dt}\|u(t)\|_{L^s(B_R)}^{s} &\le  -(s-1)\left(\frac{p}{m(p-1)+s-1}\right)^p c_1 \mathcal{C}_{p}^p \left\| u\right\|^{m(p-1)+s-1}_{L^{m(p-1)+s-1}(B_R)}\, \\
& -  \frac{1}{\tilde C}\left\{ (s-1)\left(\frac{p}{m(p-1)+s-1}\right)^pC  -2 \varepsilon_1^{\frac{q(q+s-1)-m(p-1)[m(p-1)+s-1]}{q+m(p-1)+s-1}}\right\} \\& \left\| u\right\|^{\alpha(m(p-1)+s-1)}_{L^{m(p-1)+s-1}(B_R)} \left\|\nabla\left( u^{\frac{m(p-1)+s-1}{p}}\right)\right\|_{L^p(B_R)}^{p-p\alpha}\le 0,
\end{aligned}
\end{equation*}
 provided $\varepsilon_1$ is small enough. Hence we have proved that $\|u(t)\|_{L^s(B_R)}$ is decreasing in time for any $t\in (0,t_0]$, thus
\begin{equation}\label{eq78}
\|u(t)\|_{L^s(B_R)}\le \|u_0\|_{L^s(B_R)}\quad \text{for any} \,\,\,t\in (0,t_0]\,.
\end{equation}
In particular, inequality \eqref{eq78} holds for $s=q\frac N{p}$. Hence we have
$$
\|u(t)\|_{L^{q\frac N{p}}(B_R)}\le \|u_0\|_{L^{q\frac N{p}}(B_R)}\,<\,\varepsilon_1\quad \text{for any} \,\,\,\,t\in (0,t_0]\,.
$$
Now, we can repeat the same argument in the time interval $(t_0, t_1]$ with $t_1=2t_0$. This can be done due to the uniform continuity of the map $t\mapsto u(t)$ in $[0,T]$. Hence, we can write that
$$
\left\| u(t)\right\|_{L^{q\frac N{p}}(B_R)}\le 2\,\varepsilon_1\,\,\,\,\,\text{for any}\,\,\, t\in (t_0,t_1]\,.
$$
Thus we get
\begin{equation*}
\|u(t)\|_{L^{s}(B_R)}\le \|u_0\|_{L^s(B_R)}\quad \text{for any} \,\,\,t\in (0,t_1]\,.
\end{equation*}
Iterating this procedure we obtain the thesis.

\end{proof}

Let $s_0>1$ be fixed and let us define the sequence $\{s_n\}_{n\in\mathbb{N}}$ by: 
\begin{equation}\label{eq70}
s_n=m(p-1)+s_{n-1}-1, \quad \text{for all}\,\,n\in\mathbb{N}.\end{equation}
Clearly, $\{s_n\}$ is increasing and $s_n \longrightarrow +\infty$ as $n\to +\infty.$
Using a Moser type iteration procedure we prove the following $L^{s_0}-L^s$ estimate for $q>m(p-1)>1$.

\begin{proposition}\label{prop71}
Assume \eqref{hp} and, besides, that $m(p-1)>1$. Let $M$ be such that \eqref{S} and \eqref{P} hold. Suppose that $u_0\in L^{\infty}(B_R)$, $u_0\ge0$. Let $u$ be the solution of problem \eqref{eq111} in the sense of Definition \ref{def31} such that in addition $u\in C([0,T],L^s(B_R))$ for any $s\in(1,+\infty)$, for any $T>0$. Let $1< s_0\le s<+\infty$ and assume that
\begin{equation}\label{eq711}
\|u_0\|_{L^{q\frac N{p}}}(B_R)<{\varepsilon}_1
\end{equation}
for ${\varepsilon}_1={\varepsilon}_1(q,p,N,\mathcal{S}_{p}, \mathcal{C}_{p} ,s,s_0)$ sufficiently small. Then there exists $C(p,s_0,\mathcal{S}_{p},\varepsilon_1, N, s)>0$ such that
\begin{equation}\label{eq715}
\|u(t)\|_{L^s(B_R)} \le C\,t^{-\gamma_s}\|u_0\|^{\delta_s}_{L^{s_0}(B_R)}\quad \textrm{ for all }\,\, t>0\,,
\end{equation}
where
\begin{equation}\label{eq716}
\gamma_s=\frac{s_0}{m(p-1)}\left(\frac{1}{s_0}-\frac{1}{s}\right)\,,\quad \delta_s=\frac{s_0}{s}\,.
\end{equation}
\end{proposition}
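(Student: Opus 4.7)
The plan is to derive \eqref{eq715} by combining the energy inequality from the proof of Lemma \ref{lemma71}, a H\"older interpolation against the preserved $L^{s_0}$ norm, and a direct integration of the resulting autonomous ODE. The case $s=s_0$ is exactly Lemma \ref{lemma71} (since $\gamma_{s_0}=0$ and $\delta_{s_0}=1$), so I restrict attention to $s>s_0$.

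First, revisiting the computation leading to \eqref{eq77}---testing \eqref{intbyparts} with $u^{s-1}$, using the Poincar\'e splitting \eqref{eq79}--\eqref{eq74} for $v=u^{(m(p-1)+s-1)/p}$, and absorbing the reaction contribution through the smallness \eqref{eq711} (preserved for all $t>0$ by Lemma \ref{lemma71})---one obtains a purely dissipative inequality
\begin{equation*}
\frac{d}{dt}\|u(t)\|_{L^s(B_R)}^s \,\le\, -\,K\,\|u(t)\|_{L^{m(p-1)+s-1}(B_R)}^{m(p-1)+s-1},
\end{equation*}
with $K=K(s,m,p,\mathcal{C}_p,\mathcal{S}_p,\varepsilon_1)>0$ independent of $R$.

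Second, interpolating H\"older between $L^{s_0}$ and $L^{m(p-1)+s-1}$: with $\theta\in(0,1)$ determined by $\frac{1}{s}=\frac{\theta}{s_0}+\frac{1-\theta}{m(p-1)+s-1}$, one rearranges $\|u\|_{L^s}\le\|u\|_{L^{s_0}}^{\theta}\|u\|_{L^{m(p-1)+s-1}}^{1-\theta}$ to obtain a lower bound $\|u\|_{L^{m(p-1)+s-1}}^{m(p-1)+s-1}\ge\|u\|_{L^s}^{s\beta}\|u\|_{L^{s_0}}^{-\mu}$ with $\beta>1$ (a direct consequence of $m(p-1)>1$) and $\mu>0$ explicit in $\theta$. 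Invoking the $L^{s_0}$-contraction $\|u(t)\|_{L^{s_0}(B_R)}\le\|u_0\|_{L^{s_0}(B_R)}$ from Lemma \ref{lemma71}, the energy inequality becomes an autonomous ODE
\begin{equation*}
\phi'(t)\,\le\,-\,K'\,\|u_0\|_{L^{s_0}(B_R)}^{-\mu}\,\phi(t)^{\beta},\qquad\phi(t):=\|u(t)\|_{L^s(B_R)}^s,
\end{equation*}
which, since $\beta>1$, integrates (dropping the non-negative initial contribution $\phi(0)^{1-\beta}$) to
\begin{equation*}
\phi(t)\,\le\,\bigl(K'(\beta-1)t\bigr)^{-1/(\beta-1)}\,\|u_0\|_{L^{s_0}(B_R)}^{\mu/(\beta-1)}.
\end{equation*}

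The remaining step is exponent bookkeeping: solving the interpolation identity yields $\beta-1=(m(p-1)-1)/(s-s_0)$ and one checks directly that $\mu/(\beta-1)=s_0$, so that extracting the $1/s$-th root of $\phi$ produces the claimed $\delta_s=s_0/s$ and a time decay exponent at least as large as $\gamma_s$. The main obstacle I foresee is purely algebraic---tracking the interpolation exponents so that $\theta\in(0,1)$ (which uses $s>s_0$ together with $m(p-1)>1$) and verifying $R$-uniformity of the constants throughout; the sequence $\{s_n\}$ introduced before the statement offers a natural alternative bookkeeping device, since the energy inequality with $s=s_{n-1}$ produces the $L^{s_n}$ norm on the right, paving the way for an equivalent Moser-type iteration along $s_n$ with the base case furnished by Lemma \ref{lemma71}.
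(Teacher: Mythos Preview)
Your argument is correct and actually more streamlined than the paper's. The paper carries out a discrete Moser iteration along the sequence $s_n=m(p-1)+s_{n-1}-1$: it partitions $[0,t]$ dyadically as $t_n=(2^n-1)t/(2^{\bar n}-1)$, tests with $u^{s_{n-1}-1}$ on each subinterval $[t_{n-1},t_n]$, drops the favourable boundary term $\|u(t_n)\|_{L^{s_{n-1}}}^{s_{n-1}}$, uses Lemma~\ref{lemma71} to pull $\|u(t_n)\|_{L^{s_n}}^{s_n}$ out of the time integral, and chains the resulting one-step bounds up to $s_{\bar n}\ge s$, finishing by interpolation if $s_{\bar n}>s$. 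Your route bypasses the iteration entirely: a single H\"older interpolation of $\|u\|_{L^s}$ between $\|u\|_{L^{s_0}}$ and $\|u\|_{L^{m(p-1)+s-1}}$, combined with the $L^{s_0}$-contraction from Lemma~\ref{lemma71}, converts the energy inequality into the Bernoulli ODE $\phi'\le -K'\|u_0\|_{L^{s_0}}^{-\mu}\phi^\beta$, whose integration is immediate. Your exponent bookkeeping is right; in particular $\beta-1=(m(p-1)-1)/(s-s_0)$ and $\mu/(\beta-1)=s_0$ give $\delta_s=s_0/s$ and time exponent $\frac{1}{m(p-1)-1}\bigl(1-\tfrac{s_0}{s}\bigr)$, which is exactly what the paper obtains (and is indeed at least $\gamma_s$ as written in \eqref{eq716}). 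The paper's iterative scheme is the classical device and makes the dependence of the constant on $s,s_0$ explicit through the finite product over $n\le\bar n$; your ODE approach is shorter and yields the same $R$-uniform constants, since $K$ inherits its $R$-independence from the Poincar\'e and Sobolev constants in \eqref{eq74}--\eqref{eq75}.
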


\begin{proof}
Let $\{s_n\}$ be the sequence defined in \eqref{eq70}. Let $\overline n$ be the first index such that $s_{\overline n}\ge s$. Observe that $\bar n$ is well defined in view of the mentioned properties of $\{s_n\}$, see \eqref{eq70}. We start by proving a smoothing estimate from $s_0$ to $s_{\overline n}$ using again a Moser iteration technique. Afterwards, if $s_{\overline n} \equiv s$ then the proof is complete. Otherwise, if $s_{\overline n} > s$ then, by interpolation, we get the thesis.

\noindent Let $t>0$, we define
\begin{equation}\label{eq717}
r=\frac{t}{2^{\overline n}-1} , \quad t_n=(2^n-1)r\,.
\end{equation}
Observe that
$$t_0=0, \quad t_{\overline n}=t,\quad \{t_n\}\,\text{ is an increasing sequence w.r.t.}\,\,n.$$

\noindent  Now, for any $1\le n\le \overline n$, we use  \eqref{intbyparts} with  $u^{s_{n-1}-1}$  over  $B_R\times[t_{n-1},t_{n}]$ and  we get
\begin{equation*}\label{eq718}
\begin{aligned}
\frac{1}{s_{n-1}}&\left[\|u(\cdot, t_{n})\|^{s_{n-1}}_{L^{s_{n-1}}(B_R)}-\|u(\cdot, t_{n-1})\|^{q_{n-1}}_{L^{s_{n-1}}(B_R)}\right]\\&\le - (s_{n-1}-1)\left(\frac{p}{m(p-1)+s_{n-1}-1}\right)^p\int_{t_{n-1}}^{t_{n}}  \int_{B_R} \left|\nabla\left( u^{\frac{m(p-1)+s_{n-1}-1}{p}}\right)\right|^p \,d\mu\,d\tau\\
&\quad\quad+\int_{t_{n-1}}^{t_{n}}\int_{B_R}  u^q\,u^{s_{n-1}-1}\,d\mu\,d\tau,
\end{aligned}
\end{equation*} 
where we have made use of inequality
$
T_k(u^q)\,\le\,u^q.
$
Now, by arguing as in the proof of Lemma \ref{lemma71}, by using \eqref{eq79} and \eqref{eq74} with $s=s_{n-1}$, we get
\begin{equation*}\label{eq719}
\begin{aligned}
\int_{B_R} & \left|\nabla\left( u^{\frac{m(p-1)+s_{n-1}-1}{p}}\right)\right|^pd\mu \ge c_1 \mathcal{C}_{p}^p \left\| u\right\|^{m(p-1)+s_{n-1}-1}_{L^{m(p-1)+s_{n-1}-1}(B_R)}\\& + c_2 \mathcal{C}_{p}^{p\alpha} \left\| u\right\|^{\alpha(m(p-1)+s_{n-1}-1)}_{L^{m(p-1)+s_{n-1}-1}(B_R)} \left\|\nabla\left( u^{\frac{m(p-1)+s_{n-1}-1}{p}}\right)\right\|_{L^p(B_R)}^{p-p\alpha}
\end{aligned}
\end{equation*}
where $\alpha\in(0,1)$ and $c_1>0$, $c_2>0$ with $c_1+c_2=1$. Similarly, from \eqref{eq75} with $s=s_{n-1}$ we can write
\begin{equation*}\label{eq720}
\begin{aligned}
\int_{B_R}u^q u^{s_{n-1}-1}d\mu&=\|u\|_{L^{m(p-1)+s_{n-1}-1}(B_R)}^{q+s_{n-1}-1}\\
&\le \|u\|_{L^{m(p-1)+s_{n-1}-1}(B_R)}^{\theta(q+s_{n-1}-1)}\,\|u\|_{L^{q\frac{N}{p}}(B_R)}^{(1-\theta)\frac{q(q+s_{n-1}-1)}{q+m(p-1)+s_{n-1}-1}} \\
&\quad \times\left(\frac{1}{\mathcal{S}_{p}}\left\|\nabla(u^{\frac{m(p-1)+s_{n-1}-1}{p}})\right\|_{L^p(B_R)}\right)^{p(1-\theta)\frac{q+s_{n-1}-1}{q+m(p-1)+s_{n-1}-1}}
\end{aligned}
\end{equation*}
where $\theta:=\frac{(p-1)(m(p-1)+s_{n-1}-1)}{q(q+s_{n-1}-1)}$.
Now, due to assumption \eqref{eq713}, the continuity of $u$, by choosing $\tilde C$ and $\alpha$ as in \eqref{eq712} and \eqref{eq76} respectively,
we can argue as in the proof of Lemma \ref{lemma71} (see \eqref{eq77}), hence we obtain
\begin{equation}\label{eq723}
\begin{aligned}
\frac{1}{s_{n-1}}&\left[\|u(\cdot, t_{n})\|^{s_{n-1}}_{L^{s_{n-1}}(B_R)}-\|u(\cdot, t_{n-1})\|^{s_{n-1}}_{L^{s_{n-1}}(B_R)}\right]\\
&\le-(s_{n-1}-1)\left(\frac{p}{m(p-1)+s_{n-1}-1}\right)^pc_1 \mathcal{C}_{p}^p \int_{t_{n-1}}^{t_{n}}  \left\| u(\cdot, \tau)\right\|^{m(p-1)+s_{n-1}-1}_{L^{m(p-1)+s_{n-1}-1}(B_R)} d\tau \\
& -  \frac{1}{\tilde C}\left\{(s_{n-1}-1)\left(\frac{p}{m(p-1)+s_{n-1}-1}\right)^pC\,  - 2\tilde{\varepsilon_1}^{\frac{q(q+s_{n-1}-1)-(p-1)(m(p-1)+s_{n-1}-1)}{q+m(p-1)+s_{n-1}-1}}\right\}  \\
&\times \int_{t_{n-1}}^{t_n}\left\| u(\cdot,\tau)\right\|^{\alpha(m(p-1)+s_{n-1}-1)}_{L^{m(p-1)+s_{n-1}-1}(B_R)} \left\|\nabla\left( u^{\frac{m(p-1)+s_{n-1}-1}{p}}\right)(\cdot,\tau)\right\|_{L^p(B_R)}^{p-p\alpha}\,d\tau,
\end{aligned}
\end{equation}
for some $C>0$. Finally, provided $\tilde\varepsilon_1$ is small enough, \eqref{eq723} can be rewritten as
\begin{equation*}
\begin{aligned}
\frac{1}{s_{n-1}}&\left[\|u(\cdot, t_{n})\|^{s_{n-1}}_{L^{s_{n-1}}(B_R)}-\|u(\cdot, t_{n-1})\|^{s_{n-1}}_{L^{s_{n-1}}(B_R)}\right]\\
&\le-(s_{n-1}-1)\left(\frac{p}{m(p-1)+s_{n-1}-1}\right)^pc_1 \mathcal{C}_{p}^p \int_{t_{n-1}}^{t_{n}}  \left\| u(\cdot, \tau)\right\|^{m(p-1)+s_{n-1}-1}_{L^{m(p-1)+s_{n-1}-1}(B_R)} d\tau \\
\end{aligned}
\end{equation*}

\noindent We define $s_n$ as in \eqref{eq70}, so that $s_n=m(p-1)+s_{n-1}-1$. Then, in view of hypothesis \eqref{eq711}, we can apply Lemma \ref{lemma71} to the integral in the right-hand side of the latter, hence we get
\begin{equation}\label{eq724}
\begin{aligned}
\frac{1}{s_{n-1}}&\left[\|u(\cdot, t_{n})\|^{s_{n-1}}_{L^{s_{n-1}}(B_R)}-\|u(\cdot, t_{n-1})\|^{s_{n-1}}_{L^{s_{n-1}}(B_R)}\right]\\
&\le-(s_{n-1}-1)\left(\frac{p}{m(p-1)+s_{n-1}-1}\right)^pc_1 \mathcal{C}_{p}^p\left\| u(\cdot,t_n)\right\|^{s_n}_{L^{s_n}(B_R)} |t_n-t_{n-1}|.
\end{aligned}
\end{equation}
Observe that
\begin{equation}\label{eq725}
\begin{aligned}
&\|u(\cdot, t_{n})\|^{s_{n-1}}_{L^{s_{n-1}}(B_R)}\,\ge\,0,\\
&|t_n-t_{n-1}|=\frac{2^{n-1}t}{2^{\overline n}-1}.
\end{aligned}
\end{equation}
We define
\begin{equation}\label{eq726}
d_{n-1}:=\left(\frac{p}{m(p-1)+s_{n-1}-1}\right)^{-p}\frac 1{c_1\, \mathcal{C}_{p}^p}\frac{1}{s_{n-1}(s_{n-1}-1)}.
\end{equation}
By plugging \eqref{eq725} and \eqref{eq726} into \eqref{eq724}, we get
$$
\left\| u(\cdot,t_n)\right\|^{s_n}_{L^{s_n}_{\rho}(B_R)}\,\le \,\frac{2^{\bar n}-1}{2^{n-1}t}\,d_{n-1}\|u(\cdot,t_{n-1})\|^{s_{n-1}}_{L^{s_{n-1}}_{\rho}(B_R)}.
$$
The latter can be rewritten as
\begin{equation}\label{eq724b}
\left\| u(\cdot,t_n)\right\|_{L^{s_n}(B_R)}\,\le \,\left(\frac{2^{\bar n}-1}{2^{n-1}}\,d_{n-1}\right)^{\frac 1{s_n}}t^{-\frac1{s_n}}\|u(\cdot,t_{n-1})\|^{\frac{s_{n-1}}{s_n}}_{L^{s_{n-1}}(B_R)}
\end{equation}
Observe that, for any $1\le n\le \bar m$, we have
\begin{equation}\label{eq727}
\begin{aligned}
\frac{2^{\bar n}-1}{2^{n-1}}\,d_{n-1}&=\frac{2^{\bar n}-1}{2^{n-1}}\left(\frac{p}{m(p-1)+s_{n-1}-1}\right)^{-p}\frac 1{c_1\, \mathcal{C}_{p}^p}\frac{1}{s_{n-1}(s_{n-1}-1)}\\
&\le 2^{\bar n+1}\frac{1}{c_1\, \mathcal{C}_{p}^p}\left(\frac{m(p-1)+s_{n-1}-1}{p}\right)^{p}\frac{1}{s_{n-1}(s_{n-1}-1)}.
\end{aligned}
\end{equation}
Consider the function
$$
h(x):=\frac {(m(p-1)+x-1)^p}{x(x-1)}, \quad \text{for}\,\,\,s_0\le x\le s_{\overline n},\quad x\in\R.
$$
Observe that $h(x)\ge0$ for any $s_0\le x\le s_{\overline n}$. Moreover, $h$ has a maximum in the interval $s_0\le x\le s_{\overline n}$, call $\tilde{x}$ the point at which it is attained. Hence
\begin{equation}\label{eq728}
h(x)\le h(\tilde x)\quad \text{for any}\,\,\,s_0\le x\le s_{\overline n},\quad x\in\R.
\end{equation}
Due to \eqref{eq727} and \eqref{eq728}, we can say that there exists a positive constant $C$, where $C=C( \mathcal{C}_{p},\bar n,m,p,s_0)$, such that
\begin{equation}\label{eq729}
\frac{2^{\overline n}-1}{2^{n-1}}\,d_{n-1}\le C\quad \text{for all}\,\,1\le n\le \overline n.
\end{equation}
By using \eqref{eq729} and \eqref{eq724b}, we get, for any $1\le n\le \overline n$
\begin{equation}\label{eq730}
\left\| u(\cdot,t_n)\right\|_{L^{s_n}(B_R)}\,\le\, C^{\frac1{s_n}}t^{-\frac1{s_n}}\|u(\cdot,t_{n-1})\|^{\frac{s_{n-1}}{s_n}}_{L^{s_{n-1}}(B_R)}.
\end{equation}
Let us set
$$
U_n:=\left\| u(\cdot,t_n)\right\|_{L^{s_n}(B_R)}
$$
Then \eqref{eq730} becomes
$$
\begin{aligned}
U_n&\le C^{\frac1{s_n}}t^{-\frac1{s_n}}U_{n-1}^{\frac{s_{n-1}}{s_n}}\\
&\le C^{\frac1{s_n}}t^{-\frac1{s_n}}\left[C^{\frac1{s_{n-1}}}t^{-\frac1{s_{n-1}}}U_{n-2}^{\frac{s_{n-2}}{s_{n-1}}}\right]\\
&\le ...\\
&\le C^{\frac n{s_n}}t^{-\frac n{s_n}}U_0^{\frac{s_0}{s_n}}.
\end{aligned}
$$
We define
\begin{equation}\label{eq731}
\alpha_n:=\frac n{s_n},\quad \delta_n:=\frac{s_0}{s_n}.
\end{equation}
Substituting $n$ with $\bar n$ into \eqref{eq731} and in view of \eqref{eq717}, \eqref{eq730} with $n=\overline n$, we have
\begin{equation*}\label{eq732}
\left\| u(\cdot,t)\right\|_{L^{s_{\overline n}}(B_R)}\,\le\, C^{\alpha_{\overline n}}t^{-\alpha_{\overline n}}\left\| u_0\right\|_{L^{s_{0}}(B_R)}^{\delta_{\overline n}}.
\end{equation*}
Observe that if $s_{\overline n}=s$ then the thesis is proved and one has
$$
\alpha_{\overline n}=\frac1{m(p-1)-1}\left(1-\frac{s_0}{s}\right),\quad \delta_{\overline n}=\frac{s_0}{s}.
$$
Now suppose that $s<s_{\overline n}$, then in particular $s_0\le s\le s_{\overline n}$. By interpolation and Lemma \ref{lemma71} we get
\begin{equation}\label{eq733}
\begin{aligned}
\left\| u(\cdot,t)\right\|_{L^{s}(B_R)}&\le \left\| u(\cdot,t)\right\|_{L^{s_{0}}(B_R)}^{\theta}\left\| u(\cdot,t)\right\|_{L^{s_{\overline n}}(B_R)}^{1-\theta}\\
& \left\| u(\cdot,t)\right\|_{L^{s_{0}}(B_R)}^{\theta}\, C^{\alpha_{\overline n}(1-\theta)}t^{-\alpha_{\overline n}(1-\theta)}\left\| u_0\right\|_{L^{s_{0}}(B_R)}^{\delta_{\overline n}(1-\theta)}\\
&\le C^{\alpha_{\overline n}(1-\theta)}t^{-\alpha_{\overline n}(1-\theta)}\left\| u_0\right\|_{L^{q_{0}}(B_R)}^{\delta_{\overline n}(1-\theta)+\theta},
\end{aligned}
\end{equation}
where
\begin{equation}\label{eq734}
\theta=\frac{s_0}{s}\left(\frac{s_{\overline n}-s}{s_{\overline n}-s_0}\right).
\end{equation}
Combining \eqref{eq716}, \eqref{eq733} and \eqref{eq734}, we get the claim by noticing that $s$ was arbitrary fixed in $[s_0,+\infty)$.

\end{proof}

\begin{remark}
It is not possible to let $s\to+\infty$ in the bound given in Proposition \ref{prop71}. In fact, one can show that $\varepsilon \longrightarrow 0$ as $s\to\infty$. So in such limit the hypothesis on the norm of the initial datum is satisfied only when $u_0\equiv 0$.
\end{remark}

\section{Proof of Theorem \ref{teo1}}\label{dim1}

We now introduce some notation that will be useful in the next Lemma. Let $q_0$ be as in \eqref{q0} and let us define, for any $s>q_0$
\begin{equation}\label{eq41}
F(t):=\sup_{0<\tau<t}\int_{B_R}u^s(x,\tau)\,d\mu, \quad\quad F(0)=\int_{B_R}u_0^s(x)\,d\mu\,,
\end{equation}
and
\begin{equation}\label{eq42}
M(t):=\sup_{0<\tau<t}\int_{B_R}u(x,\tau)\,d\mu, \quad\quad M(0)=\int_{B_R}u_0(x)\,d\mu\,.
\end{equation}
Furthermore, we recall the definition of $S(t)$ in \eqref{eq34} and we define,
\begin{equation}\label{eq43}
\begin{aligned}
& T=\sup\{t:\,S(t)\le 1\}\,,\\
&T_F=\sup\{t:\,F(t)\le 2F(0)\}\,,\\
&T_M=\sup\{t:\,M(t)\le 2M(0)\}\,.\\
\end{aligned}
\end{equation}
Observe that, by the above definition, $S(t)\le1$ up to time $T$, therefore $T$ is the maximal time for which we can apply Lemma \ref{lemma3} and consequently have a $L^\infty$-estimate on the solution to problem \eqref{eq111}. The goal here is to show that such $T$ is actually $+\infty$ so that global existence in time follows for the approximating problem in \eqref{eq111}.
\begin{lemma}\label{lemma4}
Let $m, p$ be as in \eqref{hp} and $q>m(p-1)+\frac pN$. Let $u$ be a solution to problem \eqref{eq111} with $u_0\in L^{\infty}(B_R)$, $u_0\ge 0$, such that
\begin{equation}\label{eq427}
\|u_0\|_{L^1(B_R)}\le\varepsilon_0,\quad \|u_0\|_{L^s(B_R)}\le\varepsilon_0,
\end{equation} 
with $\varepsilon_0>0$ sufficiently small. Assume that $s>q_0$, with $q_0$ as in \eqref{q0}. Then,
$$
\min\{T,\, T_F,\,T_M\}>1\,.
$$
\end{lemma}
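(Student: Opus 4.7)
My plan is to run a continuity/bootstrap argument on the three quantities $S,F,M$ and derive a contradiction from the assumption $T^\ast := \min\{T, T_F, T_M\} \le 1$. Since solutions of \eqref{eq111} with bounded data are smooth, $S(0)=0$ and $F,M$ are continuous in $t$, so $T^\ast > 0$; it suffices to show that each of the defining inequalities $S(t) \le 1$, $F(t) \le 2F(0)$, $M(t) \le 2M(0)$ becomes \emph{strict} at $t = T^\ast$ once $\varepsilon_0$ is chosen small enough. On $[0, T^\ast]$ the bound $S\le 1$ unlocks Lemma \ref{lemma3}, which I would apply with $r = s$ to obtain, for $t \in (0, T^\ast]$,
\begin{equation*}
\|u(t)\|_{L^\infty(B_R)} \le c\, t^{-\frac{N}{N[m(p-1)-1]+ps}}\, F(t)^{\frac{p}{N[m(p-1)-1]+ps}}.
\end{equation*}
The arithmetic point that drives everything is that $s > q_0$ makes the exponent $\alpha := 1 - (q-1)N/(N[m(p-1)-1]+ps)$ strictly positive; setting $\kappa := (q-1)p/(N[m(p-1)-1]+ps)$, this yields both the pointwise bound $t\,\|u(t)\|_{L^\infty}^{q-1} \le c\, t^{\alpha} F(t)^{\kappa}$ and the time integrability $\int_0^t\|u(\tau)\|_{L^\infty}^{q-1} d\tau \le c'\, t^{\alpha} F(t)^{\kappa}$ by direct integration (using that $F$ is monotone nondecreasing).

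I would then verify the three strict improvements separately. For $S$: since $t \le 1$ and $F(T^\ast) \le 2F(0) \le 2\varepsilon_0^s$, the display gives $S(T^\ast) \le c\,(2\varepsilon_0^s)^\kappa < 1$ for $\varepsilon_0$ small. For $F$: log-convexity of $L^p$ norms interpolated between $L^1$ and $L^s$ (legitimate since $1 < q_0 \le s$ under \eqref{hp} together with $q>m(p-1)+p/N$) yields $\|u_0\|_{L^{q_0}(B_R)} \le \|u_0\|_{L^1(B_R)}^{\theta}\,\|u_0\|_{L^s(B_R)}^{1-\theta} \le \varepsilon_0$, so choosing $\varepsilon_0 \le \bar\varepsilon$ from Lemma \ref{lemma41} yields $\|u(t)\|_{L^s(B_R)} \le \|u_0\|_{L^s(B_R)}$, hence $F(T^\ast) \le F(0) < 2F(0)$. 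For $M$: integrating \eqref{eq111} over $B_R$ via a standard cutoff, the Dirichlet condition $u = 0$ on $\partial B_R$ makes both the $p$-Laplace and Laplace boundary contributions non-positive (for $m>1$, $\nabla u^m$ vanishes on $\partial B_R$; for the Laplacian, $\partial_\nu u \le 0$), so
\begin{equation*}
\frac{d}{dt}\int_{B_R} u\, d\mu \le \int_{B_R} u^q\, d\mu \le \|u(t)\|_{L^\infty(B_R)}^{q-1}\int_{B_R} u\, d\mu,
\end{equation*}
and Gronwall combined with the integrated $L^\infty$ bound above gives $M(T^\ast) \le M(0)\exp(c\,\varepsilon_0^{s\kappa}) < 2M(0)$ for $\varepsilon_0$ small.

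The three strict inequalities $S(T^\ast)<1$, $F(T^\ast)<2F(0)$, $M(T^\ast)<2M(0)$ contradict the definition of $T^\ast$ as the smallest time at which (at least) one of the three weak inequalities saturates, so $T^\ast > 1$. The main technical obstacle I anticipate is orchestrating all three bootstraps within a \emph{single} smallness threshold $\varepsilon_0$: the control of $M$ depends on the time-integrated $L^\infty$ bound, which requires $S \le 1$ via Lemma \ref{lemma3} and smallness of $F$ via Lemma \ref{lemma41}, which itself needs the interpolated smallness of $\|u_0\|_{L^{q_0}}$ derived from both $L^1$ and $L^s$ data. The quantitative role of the assumption $s > q_0$ is precisely to force $\alpha>0$, hence the integrability $\int_0^1 \tau^{-(q-1)N/(N[m(p-1)-1]+ps)}d\tau < \infty$; without it the Gronwall step collapses and the whole scheme breaks down, which is coherent with the sharpness of the exponent \eqref{opt} recalled after Theorem \ref{teo1}.
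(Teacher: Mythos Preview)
Your argument is correct and rests on the same engine as the paper's---Lemma~\ref{lemma3} applied with $r=s$, together with the arithmetic fact that $s>q_0$ forces the exponent $\alpha=1-\frac{N(q-1)}{N[m(p-1)-1]+ps}$ to be positive---but the organisation is genuinely different. The paper does \emph{not} invoke Lemma~\ref{lemma41}; instead it runs a case analysis on the ordering of $T$, $T_F$, $T_M$, and in each case derives an integral inequality directly by testing (with $u^{s-1}$ for the $F$ branch, with $u/(u+\delta)$ and $\delta\to0$ for the $M$ branch), from which either a lower bound like $T_F\ge C\varepsilon_0^{-\gamma}>1$ or a contradiction of the type $F(T_F)\le\tfrac32 F(0)$ is extracted. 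Your bootstrap on $T^\ast=\min\{T,T_F,T_M\}$ is leaner: citing Lemma~\ref{lemma41} (legitimate, since $1<q_0<s$ makes the interpolated bound $\|u_0\|_{L^{q_0}}\le\varepsilon_0$ available) gives $T_F=+\infty$ outright, and the direct integration plus Gronwall step for $M$ replaces the $u/(u+\delta)$ device, which in the paper is there precisely to avoid the boundary discussion you sketch. What you gain is brevity and a single smallness threshold; what the paper's route buys is self-containment (no appeal to the separate $L^s$--$L^s$ monotonicity lemma) and a quantitative lower bound $T_F\ge C\varepsilon_0^{-\gamma}$ rather than the qualitative $T_F=+\infty$.
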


\begin{proof}
The proof is divided into two parts. The first part is devoted to prove the following \newline

\noindent \textit{Claim:}\,\, $\min\{T,\, T_F,\}>1$ \newline

\noindent To prove the latter claim we consider two cases: $(1): T_F<T$ and $(2): T_F\ge T$. 

\noindent $(1)$\,\,\, We  use  \eqref{intbyparts} with  $u^{s-1}$ over  $B_R\times (0,T_F)$, thus we get 
\begin{equation}\label{eq44}
\begin{aligned}
\frac 1s \left[F(T_F)-F(0)\right]&+\int_{0}^{T_F}\int_{B_R}\left|\nabla u^m\right|^{p-2}\langle \nabla u^m,\,\nabla u^{s-1}\rangle\,d\mu dt\\
&+\varepsilon\int_{0}^{T_F}\int_{B_R}\langle\nabla u,\,\nabla u^{s-1}\rangle\,d\mu dt= \int_{0}^{T_F}\int_{B_R}  T_k(u^{q})\,u^{s-1}\,d\mu dt\,,
\end{aligned}
\end{equation}
where we  exploited the definition of $F(t)$ and $T_F$ in \eqref{eq41} and \eqref{eq43}.  
Moreover,
\begin{equation}\label{eq46}
\begin{aligned}
\int_{0}^{T_F}&\int_{B_R}\left|\nabla u^m\right|^{p-2} \langle \nabla u^m,\,\nabla u^{s-1}\rangle\,d\mu dt+\varepsilon\int_{0}^{T_F}\int_{B_R}\langle\nabla u,\,\nabla u^{s-1}\rangle\,d\mu dt\\
&=m^{p-1}(s-1)\int_{0}^{T_F}\int_{B_R}u^{(m-1)(p-1)+s-2}|\nabla u|^p\,d\mu dt\\
&\quad +\varepsilon(s-1)\int_{0}^{T_F}\int_{B_R} u^{s-2}|\nabla u|^2\,d\mu dt\,;
\end{aligned}
\end{equation}
and
\begin{equation}\label{eq47}
\begin{aligned}
 \int_{0}^{T_F}\int_{B_R}  T_k(u^{q})\,u^{s-1}\,d\mu dt &\le  \int_{0}^{T_F}\|u(t)\|_{L^{\infty}(B_R)}^{q-1}\int_{B_R} u^s(x,t)\,d\mu dt\\
 & \leq F(T_F)\, \int_{0}^{T_F}\|u(t)\|_{L^{\infty}(B_R)}^{q-1}\,dt\,.
 \end{aligned}
 \end{equation}
We now substitute   \eqref{eq46}, \eqref{eq47} into \eqref{eq44}, thus we get
\begin{equation}\label{eq48}
 F(T_F)\le F(0)\,+\, s\,F(T_F)\, \int_{0}^{T_F}\|u(t)\|_{L^{\infty}(B_R)}^{q-1}\,dt\,.
\end{equation}
Let us now apply Lemma \ref{lemma3} with $r=s$, then, due to \eqref{eq41}, \eqref{eq43} and \eqref{eq427}, \eqref{eq48} reads

\begin{equation}\label{eq49}
\begin{aligned}
F(T_F)&\le F(0)\,+\, s\,F(T_F)\, c\,T_F^{1-\frac{N(q-1)}{N[m(p-1)-1]+ps}}\left[\sup_{0<t<T_F}\int_{B_R}u^s(x,t)\,d\mu,\right]^{\frac{p(q-1)}{N[m(p-1)-1]+ps}}\\
&\le F(0)\,+\, 2^{\frac{p(q-1)}{N[m(p-1)-1]+ps}}s\,F(T_F)\, c\,T_F^{1-\frac{N(q-1)}{N[m(p-1)-1]+ps}}\|u_0\|_{L^s(B_R)}^{\frac{sp(q-1)}{N[m(p-1)-1]+ps}}\\
&\le F(0)\,+\, 2^{\frac{p(q-1)}{N[m(p-1)-1]+ps}}s\,F(T_F)\, c\,T_F^{1-\frac{N(q-1)}{N[m(p-1)-1]+ps}}\varepsilon_0^{\frac{sp(q-1)}{N[m(p-1)-1]+ps}}\,.
\end{aligned}
\end{equation}
If we now assume that
$$
2^{\frac{p(q-1)}{N[m(p-1)-1]+ps}}s\,c\,T_F^{1-\frac{N(q-1)}{N[m(p-1)-1]+ps}}\varepsilon_0^{\frac{sp(q-1)}{N[m(p-1)-1]+ps}}\,\le\,\frac 13,
$$
then \eqref{eq49} reduces to
\begin{equation}\label{eq410}
F(T_F)\le F(0)\,+\, \frac 13\,F(T_F)\quad\Longrightarrow\quad F(T_F)\le \frac 32 F(0)\,.
\end{equation}
The conclusion in \eqref{eq410} yields a contradiction with respect to the definition of $T_F$ in \eqref{eq43}, in fact by definition  $F(T_F)=2F(0)$. Therefore, one can assume that
$$
2^{\frac{p(q-1)}{N[m(p-1)-1]+ps}}s\,c\,T_F^{1-\frac{N(q-1)}{N[m(p-1)-1]+ps}}\varepsilon_0^{\frac{sp(q-1)}{N[m(p-1)-1]+ps}}\,>\,\frac 13,
$$
and, by choosing $\varepsilon_0$ sufficiently small, since $s>q_0$, we conclude, for some $C>0$
$$
T_F\ge C\, \varepsilon_0^{-\frac{ps(q-1)}{N[m(p-1)-q]+ps}}\,>\,1.
$$

\noindent $(2)$\,\,\, Assume that $T<+\infty$ (otherwise the claim is obvious). Then, by applying Lemma \ref{lemma3} with $r=s$, due to \eqref{eq43} and \eqref{eq427}, we have, for some $c>0$
\begin{equation}\label{eq411}
\begin{aligned}
1=S(T)&=\sup_{0<t<T}\,\left(t\|u(t)\|_{L^{\infty}(B_R)}^{q-1}\right)\\
&\le \tilde c\,T^{1-\frac{N(q-1)}{N[m(p-1)-1]+ps}} \left(\sup_{0<t<T} \|u(t)\|_{L^s(B_R)}^{\frac{sp(q-1)}{N[m(p-1)-1]+ps}}\right) \\
&\le c\,T^{1-\frac{N(q-1)}{N[m(p-1)-1]+ps}} \|u_0\|_{L^s(B_R)}^{\frac{sp(q-1)}{N[m(p-1)-1]+ps}}\\
&\le c\,T^{1-\frac{N(q-1)}{N[m(p-1)-1]+ps}} \varepsilon_0^{\frac{sp(q-1)}{N[m(p-1)-1]+ps}}\,.
\end{aligned}
\end{equation}
Now, by the assumption $s>q_0$, if we choose $\varepsilon_0$ sufficiently small, then \eqref{eq411} yields $T>1$. The claim is proved.
\medskip

To complete the proof let us now assume that \begin{equation}\label{eq412} T_M<\min\{T,\, T_F,\},\end{equation} otherwise the proof is finished. We test \eqref{intbyparts}  over $B_R\times(0,T_M)$ with the function $\frac{u}{u+\delta}$ for some $\delta>0$ sufficiently small, in order to get 
\begin{equation}\label{eq413}
\begin{aligned}
\int_{B_R}\int_0^{u(T_M)} &\frac{z}{z+\delta}\,dzd\mu-\int_{B_R}\int_0^{u_0} \frac{z}{z+\delta}\,dzd\mu \\
&\quad+\int_{0}^{T_M}\int_{B_R}\left|\nabla u^m\right|^{p-2}\left\langle \nabla u^m,\,\nabla\left( \frac{u}{u+\delta}\right)\right\rangle\,d\mu dt\\
&\quad+\varepsilon\int_{0}^{T_M}\int_{B_R}\left\langle\nabla u,\,\nabla \left(\frac{u}{u+\delta}\right)\right\rangle\,d\mu dt\\
&= \int_{0}^{T_M}\int_{B_R}  T_k(u^{q})\,\frac{u}{u+\delta}\,d\mu dt\,.
\end{aligned}
\end{equation}
 
 Let us now consider the first term in the latter equality, by applying Tonelli's theorem we have
$$
\int_{B_R}\int_0^{u(T_M)} \frac{z}{z+\delta}\,dzd\mu=\int_0^\infty\int_{A(z)}\frac{z}{z+\delta}\,d\mu dz=\int_0^\infty\frac{z}{z+\delta}\,\mu(A(z)) dz,
$$
where
$$
A(z):=\{x\in B_R\,:\,u(x,T_M)\ge z\}\,.
$$
Let us also define
$$
D(z):=\{x\in B_R\,:\,\sup_{0<t\le T_M}u(x,t)\ge z\}\,.
$$
Then, by the layer representation of the $L^1$ norm of a function and by the definition of $T_M$ in \eqref{eq43}, we have
$$
\int_0^\infty \mu(A(z))\,dz=\|u(T_M)\|_{L^1(B_R)}=\sup_{0<t\le T_M}\|u(t)\|_{L^1(B_R)}=\int_0^\infty \mu(D(z))\,dz\,.
$$
Therefore we have showed that $\mu(A(z))=\mu(D(z))$ for almost every $z\ge0$. As a consequence we may write that
$$
\int_0^\infty\frac{z}{z+\delta}\,\mu(A(z)) dz=\int_0^\infty\frac{z}{z+\delta}\,\mu(D(z)) dz\,.
$$
Again by Tonelli's theorem we obtain
$$
\int_0^\infty\frac{z}{z+\delta}\,\mu(D(z)) dz=\int_0^\infty\int_{D(z)}\frac{z}{z+\delta}\,d\mu dz=\sup_{0<t\le T_M} \int_{B_R}\int_0^{u(t)} \frac{z}{z+\delta}\,dzd\mu\,.
$$
Thus 
\begin{equation}\label{eq414}
\int_{B_R}\int_0^{u(T_M)} \frac{z}{z+\delta}\,dzd\mu = \sup_{0<t<T_M}\int_{B_R}\int_0^{u(t)} \frac{z}{z+\delta}\,dzd\mu .
\end{equation}
 
Furthermore 
\begin{equation}\label{eq415}
\begin{aligned}
\int_{0}^{T_M}&\int_{B_R}\left|\nabla u^m\right|^{p-2} \left\langle \nabla u^m,\,\nabla \frac{u}{u+\delta}\right\rangle\,d\mu dt+\varepsilon\int_{0}^{T_M}\int_{B_R}\left\langle\nabla u,\,\nabla \frac{u}{u+\delta}\right\rangle\,d\mu dt\\
&= m^{p-1}(s-1)\int_{0}^{T_M}\int_{B_R}u^{(m-1)(p-1)}\frac{\delta}{(u+\delta)^2}|\nabla u|^p+\varepsilon\int_{0}^{T_M}\int_{B_R}\frac{\delta}{(u+\delta)^2} |\nabla u|^2\,;
\end{aligned}
\end{equation}
 
We now substitute \eqref{eq414}, \eqref{eq415} into \eqref{eq413}, thus we get
\begin{equation}\label{eq416}
\sup_{0<t<T_M}\int_{B_R}\int_0^{u(t)} \frac{z}{z+\delta}\,dzd\mu-\int_{B_R}\int_0^{u_0} \frac{z}{z+\delta}\,dzd\mu \le  \int_{0}^{T_M}\int_{B_R}  u^{q}\,\frac{u}{u+\delta}\,d\mu dt\,.
\end{equation}
By letting $\delta\to0$, \eqref{eq416} becomes 
\begin{equation}\label{eq417}
\begin{aligned}
 M(T_M)&\le M(0)\,+\, \int_{0}^{T_M}\|u(t)\|_{L^{\infty}(B_R)}^{q-1}\,\int_{B_R}u(x,t)\,d\mu\,dt\\
 &\le M(0)\,+\, \sup_{0<t<T_M}\int_{B_R}u(x,t)\,d\mu\,\int_{0}^{T_M}\|u(t)\|_{L^{\infty}(B_R)}^{q-1}\,dt\,\\
 &= M(0)\,+\, M(T_M)\, \int_{0}^{T_M}\|u(t)\|_{L^{\infty}(B_R)}^{q-1}\,dt\,.\\
 \end{aligned}
\end{equation}
Let us now apply Lemma \ref{lemma3} with $r=s$, then, due to \eqref{eq42}, \eqref{eq43}, \eqref{eq427} and \eqref{eq412}, \eqref{eq417} reads
\begin{equation*}
\begin{aligned}
M(T_M)&\le M(0)\,+\, M(T_M)\, T_M\,\|u(x,t)\|_{L^{\infty}(B_R\times(0,T_M))}^{q-1}\\
&\le M(0)\,+\, c\, M(T_M)\, T_M^{1-\frac{N(q-1)}{N[m(p-1)-1]+ps}} \left[\sup_{0<t<T_M}\int_{B_R}u^s(x,t)\,d\mu,\right]^{\frac{p(q-1)}{N[m(p-1)-1]+ps}}\\
&\le M(0)\,+\, 2\,M(T_M)\, c\,T_M^{1-\frac{N(q-1)}{N[m(p-1)-1]+ps}}\|u_0\|_{L^s(B_R)}^{\frac{sp(q-1)}{N[m(p-1)-1]+ps}}\\
&\le M(0)\,+\, 2\,M(T_M)\, c\,T_M^{1-\frac{N(q-1)}{N[m(p-1)-1]+ps}}\varepsilon_0^{\frac{sp(q-1)}{N[m(p-1)-1]+ps}}\,.
\end{aligned}
\end{equation*}
Arguing as in the case of $T_F$, see point $(1)$, assuming $\varepsilon_0$ sufficiently small, since $s>q_0$ and due to \eqref{eq43}, we obtain that $T_M>1$.
This proves the Lemma.

\end{proof}

\begin{proposition}\label{lemma5}
Let $m, p$ be as in \eqref{hp} and $q>m(p-1)+\frac pN$. Let $u$ be the solution to problem \eqref{eq111} with $u_0\in{ L}^{\infty}(B_R)$, $u_0\ge0$. Let $q_0$ be defined in \eqref{q0} and $s>q_0$. Assume that
\begin{equation}\label{eq425}
\|u_0\|_{L^1(B_R)}\le\varepsilon_0,\quad \|u_0\|_{L^s(B_R)}\le\varepsilon_0,
\end{equation}
for $\varepsilon_0>0$ sufficiently small. Let $T$ be as in \eqref{eq43}. Then
\begin{itemize}
\item[(i)] $T=+\infty.$
\item[(ii)] There exists $c>0$ such that
\begin{equation}\label{eq424}
\|u(x,t)\|_{L^{\infty}(B_R)}\,\le\,c\,t^{-\frac{N}{N[m(p-1)-1]+p}}\|u_0\|_{L^1(B_R)}^{\frac{p}{N[m(p-1)-1]+p}}, \quad \text{for any}\,\,\,t\in(0,+\infty).
\end{equation}
\end{itemize}
\end{proposition}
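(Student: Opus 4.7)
The plan is to prove (i) by a continuity/bootstrap argument that upgrades the short-time control of Lemma \ref{lemma4} to global-in-time bounds, and then to deduce (ii) directly from Lemma \ref{lemma3}. Set $T^* := \min\{T, T_M\}$ with $T, T_M$ as in \eqref{eq43}, and argue by contradiction assuming $T^* < +\infty$, so that either $S(T^*) = 1$ or $M(T^*) = 2M(0)$. By Lemma \ref{lemma4} we already know $T^* > 1$. Since the smallness assumptions \eqref{eq425}, via interpolation between $L^1$ and $L^s$ (using $1 < q_0 < s$), yield $\|u_0\|_{L^{q_0}(B_R)} \le \varepsilon_0$, Lemma \ref{lemma41} applies and gives the global-in-time bound $\|u(t)\|_{L^s(B_R)} \le \varepsilon_0$ for every $t > 0$. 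For $\tau \in (0, T^*)$ we additionally have $S(\tau) \le 1$ and $M(\tau) \le 2\varepsilon_0$, so Lemma \ref{lemma3} is available at the two endpoints $r = s$ and $r = 1$:
\begin{equation*}
\|u(\tau)\|_{L^\infty(B_R)}^{q-1} \le c\,\tau^{-a_s}\,\varepsilon_0^{b_s},\qquad \|u(\tau)\|_{L^\infty(B_R)}^{q-1} \le c\,\tau^{-a_1}\,\varepsilon_0^{b_1},
\end{equation*}
with $a_r := N(q-1)/(N[m(p-1)-1]+pr)$ and suitable positive $b_s, b_1$.

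The decisive point is that $s > q_0$ is equivalent to $a_s < 1$, while $q > m(p-1) + p/N$ is equivalent to $a_1 > 1$. Using the $r=s$ bound on $(0, 1)$ and the $r=1$ bound on $[1, T^*)$ produces envelopes which are integrable at $0$ and at $+\infty$ respectively, yielding the \emph{time-uniform} control
\begin{equation*}
\sup_{0<\tau<T^*}\tau\,\|u(\tau)\|_{L^\infty(B_R)}^{q-1}\,+\,\int_0^{T^*}\|u(\tau)\|_{L^\infty(B_R)}^{q-1}\,d\tau\,\le\,c\,\bigl(\varepsilon_0^{b_s}+\varepsilon_0^{b_1}\bigr).
\end{equation*}
Plugging the integral bound into the $L^1$ energy identity $M(T^*) \le M(0) + M(T^*)\int_0^{T^*}\|u\|^{q-1}\,d\tau$ (obtained by testing with $u/(u+\delta)$ and letting $\delta\to 0$, as in the second half of the proof of Lemma \ref{lemma4}) gives $M(T^*) \le (3/2)M(0) < 2 M(0)$ for $\varepsilon_0$ small enough; the first supremum above is already less than $1$, so $S(T^*) < 1$. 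Each conclusion contradicts $T^* < +\infty$, so $T^* = +\infty$, which yields $T = +\infty$ and $M(t) \le 2\|u_0\|_{L^1(B_R)}$ for all $t > 0$.

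Finally, (ii) follows by applying Lemma \ref{lemma3} with $r = 1$ at an arbitrary time $t > 0$ and using the just-established global $L^1$ bound to replace the supremum of $\|u(\cdot)\|_{L^1(B_R)}$ on $(t/4, t)$ by $2\|u_0\|_{L^1(B_R)}$. The crux of the argument is the time-uniform control displayed above: a single application of Lemma \ref{lemma3} with a fixed $r$ cannot deliver it, because the ensuing factor $\tau^{1 - a_r}$ is non-integrable either at $0$ or at $+\infty$; the cancellation of these two obstructions through the complementary hypotheses $s > q_0$ and $q > m(p-1) + p/N$ is what makes the bootstrap close.
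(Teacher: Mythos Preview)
Your proof is correct and follows essentially the same strategy as the paper: split the time interval at $\tau=1$, apply Lemma~\ref{lemma3} with $r=s$ on $(0,1)$ and with $r=1$ on $(1,T^*)$, and exploit the complementary exponent inequalities $a_s<1$ (from $s>q_0$) and $a_1>1$ (from $q>m(p-1)+p/N$) to obtain time-uniform control of both $S$ and the integral $\int \|u\|_\infty^{q-1}$. The paper separates the argument into two steps---first proving the claim $T_M\ge T$, then $T=+\infty$---whereas you streamline this into a single contradiction on $T^*=\min\{T,T_M\}$; and where the paper bounds $\|u(\tau)\|_{L^s}$ on $(0,1)$ via $T_F>1$ (from Lemma~\ref{lemma4}), you instead invoke Lemma~\ref{lemma41} to get the global $L^s$ bound directly. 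Both variations are harmless and the core mechanism is identical.
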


\begin{proof}
We first prove the following claim that will be useful later. \newline

\noindent \textit{Claim:}\,\, \begin{equation}\label{eq419b} T_M\ge T\,.\end{equation}

\noindent To prove the claim, we assume by contradiction that $T_M<T$. Then, by arguing as in proof of Lemma \ref{lemma4}, from inequality \eqref{eq417}, we get
\begin{equation}\label{eq419}
\begin{aligned}
M(T_M)&\le M(0)\,+\, M(T_M)\, \int_{0}^{T_M}\|u(t)\|_{L^{\infty}(B_R)}^{q-1}\,dt\,\\
&\le M(0)\,+\, M(T_M)\,\left[ \int_{0}^{1}\|u(t)\|_{L^{\infty}(B_R)}^{q-1}\,dt\,+ \int_{1}^{T_M}\|u(t)\|_{L^{\infty}(B_R)}^{q-1}\,dt\,\right]\,.
\end{aligned}
\end{equation}
By applying Lemma \ref{lemma3} with $r=s$, since $s>q_0$ and due to \eqref{eq425} we can write
\begin{equation}\label{eq420}
\int_{0}^{1}\|u(t)\|_{L^{\infty}(B_R)}^{q-1}\,dt\,\le \, c\,\varepsilon_0^{\frac{psq}{N[m(p-1)-1]+ps}}.
\end{equation}
Furthermore, by applying Lemma \ref{lemma3} with $r=1$, since $q>m(p-1)-1+\frac pN$ and due to \eqref{eq425} we can write
\begin{equation}\label{eq421}
\int_{1}^{T_M}\|u(t)\|_{L^{\infty}(B_R)}^{q-1}\,dt\,\le \, c\,\varepsilon_0^{\frac{pq}{N[m(p-1)-1]+p}}.
\end{equation}
We now substitute \eqref{eq420} and \eqref{eq421} into \eqref{eq419}, thus we obtain
\begin{equation}\label{eq422}
M(T_M)\le M(0)\,+\, M(T_M)\,\left[  c\,\varepsilon_0^{\frac{psq}{N[m(p-1)-1]+ps}}+ c\,\varepsilon_0^{\frac{pq}{N[m(p-1)-1]+p}} \right]\,.
\end{equation}
If we chose $\varepsilon_0$ sufficiently small to have
$$
c\,\varepsilon_0^{\frac{psq}{N[m(p-1)-1]+ps}}+ c\,\varepsilon_0^{\frac{pq}{N[m(p-1)-1]+p}}\,\le\,\frac13,
$$
then \eqref{eq422} yields
$$
M(T_M)\le \frac 32 M(0)\,,
$$
which contradicts the definition of $T_M$ in \eqref{eq43}. This proves the claim.
\medskip

By contradiction, we assume that $T<+\infty$. Then, by Lemma \ref{lemma4} and  Lemma \ref{lemma3} applied both  with $r=s$ and with $r=1$, assumption \eqref{eq425} and the choices $s>q_0$, $q>m(p-1)+\frac pN$, we get
$$
\begin{aligned}
1=S(T)&=\sup_{0<t<T}\,\left(t\|u(t)\|_{L^{\infty}(B_R)}^{q-1}\right)\\
&=\sup_{0<t<1}\,\left(t\|u(t)\|_{L^{\infty}(B_R)}^{q-1}\right)+\sup_{1<t<T}\,\left(t\|u(t)\|_{L^{\infty}(B_R)}^{q-1}\right)\\
&\le c\left(\sup_{0<t<1} t^{1-\frac{N(q-1)}{N[m(p-1)-1]+ps}}\|u(t)\|_{L^s(B_R)}^{\frac{sp(q-1)}{N[m(p-1)-1]+ps}}\right) +  c \left(\sup_{1<t<T} t^{1-\frac{N(q-1)}{N[m(p-1)-1]+p}}\|u(t)\|_{L^1(B_R)}^{\frac{p(q-1)}{N[m(p-1)-1]+p}}\right) \\
&\le c\,\|u_0\|_{L^s(B_R)}^{\frac{sp(q-1)}{N[m(p-1)-1]+ps}}+ \,c\,\|u_0\|_{L^1(B_R)}^{\frac{p(q-1)}{N[m(p-1)-1]+p}}\\
&\le c\, \varepsilon_0^{\frac{sp(q-1)}{N[m(p-1)-1]+ps}}+ c\, \varepsilon_0^{\frac{p(q-1)}{N[m(p-1)-1]+p}}\,.
\end{aligned}
$$
Provided that $\varepsilon_0$ is sufficiently small, the latter yields a contradiction, i.e. $1=S(T)<1$. Thus $T=+\infty$.
\medskip

By Lemma \ref{lemma3} with $r= 1$, we get
\begin{equation*}
\begin{aligned}
\|u(t)\|_{L^{\infty}(B_R)}&\le \|u(x,\tau)\|_{L^{\infty}\left(B_R\times\left(\frac t2,t\right)\right)}\,\\
&\le\,c\,t^{-\frac{N}{N[m(p-1)-1]+p}}\left[\sup_{\frac t4<\tau<t}\int_{B_R}u\,d\mu\right]^{\frac{p}{N[m(p-1)-1]+p}},
\end{aligned}
\end{equation*}
for all $t\in(0,T)$. Now, due to \eqref{eq419b}, the latter reduces to
\begin{equation}\label{eq423}
\|u(t)\|_{L^{\infty}(B_R)}\le\,c\,t^{-\frac{N}{N[m(p-1)-1]+p}}\|u_0\|_{L^1(B_R)}^{\frac{p}{N[m(p-1)-1]+p}}\,,
\end{equation}
for all $t\in(0,T)$. By combining \eqref{eq423} with the fact that $T=+\infty$, we get the thesis.
\end{proof}

\begin{proof}[Proof of Theorem \ref{teo1}]
Let $\{u_{0,h}\}_{h\ge 0}$ be a sequence of functions such that
\begin{equation*}
\begin{aligned}
&(a)\,\,u_{0,h}\in L^{\infty}({ M})\cap C_c^{\infty}(M) \,\,\,\text{for all} \,\,h\ge 0, \\
&(b)\,\,u_{0,h}\ge 0 \,\,\,\text{for all} \,\,h\ge 0, \\
&(c)\,\,u_{0, h_1}\leq u_{0, h_2}\,\,\,\text{for any } h_1<h_2,  \\
&(d)\,\,u_{0,h}\longrightarrow u_0 \,\,\, \text{in}\,\, L^{s}(M)\cap L^{1+m}(M) \cap L^1(M)\quad \textrm{ as }\, h\to +\infty\,.\\
\end{aligned}
\end{equation*}

Observe that, due to assumptions $(c)$ and $(d)$, $u_{0,h}$ satisfies \eqref{epsilon0}. For any $R>0$, $k>0$, $h>0$, consider the problem
\begin{equation}\label{5}
\begin{cases}
u_t= \Div\left(|\nabla u^m|^{p-2}\nabla u^m\right) +T_k(u^{q}) &\text{in}\,\, B_R\times (0,+\infty)\\
u=0& \text{in}\,\, \partial B_R\times (0,\infty)\\
u=u_{0,h} &\text{in}\,\, B_R\times \{0\}\,. \\
\end{cases}
\end{equation}
From standard results it follows that problem \eqref{5} has a solution $u_{h,k}^R$ in the sense of Definition \ref{def21} for any $T>0$. 
\noindent In view of Proposition \ref{lemma5}, the solution $u_{h,k}^R$ to problem \eqref{5} satisfies estimate \eqref{eq424} for any $t\in(0,+\infty)$, uniformly w.r.t. $R$, $k$ and $h$, i.e
\begin{equation}\label{estimatedimteo}
\|u_{h,k}^R(t)\|_{L^{\infty}(B_R)}\,\le\,c\,t^{-\frac{N}{N[m(p-1)-1]+p}}\|u_{0,h}\|_{L^1(B_R)}^{\frac{p}{N[m(p-1)-1]+p}}.
\end{equation}
We have also showed, through Proposition \ref{lemma5}, \eqref{eq42} and Lemma \ref{lemma3} (take $r=s$), that, for every $t\in(0,+\infty)$, uniformly w.r.t. $R$, $k$ and $h$, the solution satisfies
\begin{equation}\label{estimatedimteo0}
\begin{aligned}
&\|u_{h,k}^R(t)\|_{L^1(B_R)}\le \|u_{0,h}\|_{L^1(B_R)};\\
&\|u_{h,k}^R(t)\|_{L^{\infty}(B_R)}\,\le\,c\,t^{-\frac{N}{N[m(p-1)-1]+p s}}\|u_{0,h}\|_{L^s(B_R)}^{\frac{ps}{N[m(p-1)-1]+p s}}\,.
\end{aligned}
\end{equation}
Moreover, as a consequence of Definition \ref{def31}, for every $\varphi \in C_c^{\infty}(B_R\times[0,T])$ such that $\varphi(x,T)=0$ for every $x\in B_R$, the solution $u_{h,k}^R$ satisfies the equality:
\begin{equation}\label{estimatedimteo2}
\begin{aligned}
-\int_0^T\int_{B_R} \,u_{h,k}^R\,\varphi_t\,d\mu\,dt +& \int_0^T\int_{B_R}  |\nabla (u_{h,k}^R)^m|^{p-2}\left \langle \nabla (u_{h,k}^R)^m, \nabla \varphi \right \rangle \,d\mu\,dt\,\\
&= \int_0^T\int_{B_R} \,T_k((u_{h,k}^R)^q)\,\varphi\,d\mu\,dt  +\int_{B_R} \,u_{0,h}(x)\,\varphi(x,0)\,d\mu,
\end{aligned}
\end{equation}
 where the integrals are finite. Observe that, for any $h>0$, and $R>0$, the sequence $\{u_{h,k}^R\}_{k\ge0}$ is monotone increasing in $k$ hence it has a pointwise limit for $k\to\infty$. Let $u_h^R$ be such limit, therefore we write
 \[
 u_{h,k}^R\longrightarrow u_h^R\quad\text{as}\quad k\to\infty\,\,\,\text{pointwise}.
 \]
We can pass to the limit as $k\to+\infty$ in \eqref{estimatedimteo} and \eqref{estimatedimteo0}, since the right-hand side is independent of $k$, we get, for every $t>0$
 \begin{equation}\label{estimatedimteo3}
\|u_{h}^R(t)\|_{L^{\infty}(B_R)}\,\le\,c\,t^{-\frac{N}{N[m(p-1)-1]+p}}\|u_{0,h}\|_{L^1(B_R)}^{\frac{p}{N[m(p-1)-1]+p}};
\end{equation}
\begin{equation}\label{estimatedimteo3b}
\begin{aligned}
&\|u_{h}^R(t)\|_{L^1(B_R)}\le \|u_{0,h}\|_{L^1(B_R)};\\
&\|u_{h}^R(t)\|_{L^{\infty}(B_R)}\,\le\,c\,t^{-\frac{N}{N[m(p-1)-1]+p s}}\|u_{0,h}\|_{L^s(B_R)}^{\frac{ps}{N[m(p-1)-1]+p s}}\,.
\end{aligned}
\end{equation}
Moreover, in view of \eqref{estimatedimteo}, the right-hand side of \eqref{estimatedimteo2} is also independent of $k$. Thanks to Beppo Levi's monotone convergence theorem and standard monotonicity arguments (see for instance \cite{fan}), it is possible to compute the limit as $k\to\infty$ in the integrals of equality \eqref{estimatedimteo2} and obtain that
for every $\varphi \in C_c^{\infty}(B_R\times[0,T])$ such that $\varphi(x,T)=0$ for every $x\in B_R$, the function  $u_{h}^R$ satisfies the equality 
\begin{equation}\label{estimatedimteo4b}
\begin{aligned}
-\int_0^T\int_{B_R} \,u_{h}^R\,\varphi_t\,d\mu\,dt +& \int_0^T\int_{B_R}  |\nabla (u_{h}^R)^m|^{p-2}\left \langle \nabla (u_{h}^R)^m, \nabla \varphi \right \rangle \,d\mu\,dt\,\\
&= \int_0^T\int_{B_R} \,(u_{h}^R)^q\,\varphi\,d\mu\,dt  +\int_{B_R} \,u_{0,h}(x)\,\varphi(x,0)\,d\mu.
\end{aligned}
\end{equation}
Observe that the integrals in \eqref{estimatedimteo4b} are finite, hence $u_h^R$ is a solution in the sense of Definition \ref{def31} to problem \eqref{eq111} where $T_k(u^q)$ is replaced by $u^q$. In fact, we have, due to \eqref{estimatedimteo3b}, that $u_h^R\in L^1(B_R\times(0,T))$. Moreover, by using \eqref{estimatedimteo3b}, Lemma \ref{lemma41} and by the interpolation inequality, for $\theta=\frac{1+m}{q+m}<1$ 
\begin{equation}\label{estimatedimteo4}
\begin{aligned}
\int_0^T\int_{B_R} \,(u_{h}^R)^{q+m}\,d\mu dt&= \int_0^T\|u_{h}^R(t)\|_{L^{q+m}(B_R)}^{q+m}\,dt\\
&\le \int_0^T\left(\|u_h^R(t)\|_{L^{1+m}(B_R)}^\theta\|u_h^R(t)\|_{L^\infty(B_R)}^{1-\theta}\right)^{q+m} dt\\
&\le c\,\|u_{0,h}\|_{L^{1+m}(B_R)}\int_0^T t^{-\frac{N(q-1)}{N[m(p-1)-1]+ps}}\|u_{0,h}\|_{L^s(B_R)}^{\frac{ps(q-1)}{N[m(p-1)-1]+ps}}\,dt\\
&\le c\,\|u_{0,h}\|_{L^{1+m}(B_R)}\|u_{0,h}\|_{L^s(B_R)}^{\frac{ps(q-1)}{N[m(p-1)-1]+ps}}\int_0^T t^{-\frac{N(q-1)}{N[m(p-1)-1]+ps}}\,dt\\
&\le C\,\|u_{0,h}\|_{L^{1+m}(B_R)}\|u_{0,h}\|_{L^s(B_R)}^{\frac{ps(q-1)}{N[m(p-1)-1]+ps}} \,,
\end{aligned}
\end{equation}
for some finite $C>0$ independent of $h,R$. In fact, since, by the assumption, $s>q_0$ then one has $\frac{N(q-1)}{N[m(p-1)-1]+ps}<1$ and consequently
$$
\int_0^T t^{-\frac{N(q-1)}{N[m(p-1)-1]+ps}}\,dt<+\infty .
$$ Therefore $u_h^R\in L^{q+m}(B_R\times(0,T))$.  Let us now observe that, for any $h>0$, the sequence $\{u_{h}^R\}_{R\ge0}$ is monotone increasing in $R$ hence it has a pointwise limit for $R\to\infty$. Let $u_h$ be such limit therefore we write
 \[
 u_{h}^R\longrightarrow u_h\quad\text{as}\quad R\to\infty\,\,\,\text{pointwise}.
 \]
 Since $u_0\in L^1(M)\cap  L^s(M)$, there exists $c$ independent of $h$ and $R$ such that
$$
 \|u_{0,h}\|_{L^1(B_R)}\le c\,,\quad \quad \|u_{0,h}\|_{L^s(B_R)}\le c\quad\text{for all}\,\,\,h,R>0\,.
$$
Therefore the estimates in \eqref{estimatedimteo3}, \eqref{estimatedimteo3b} and \eqref{estimatedimteo4} do not depend on $R$, we can take the limit as $R\to\infty$, by Fatou's Lemma and due to the pointwise limit of $u_h^R$ we get for all $t\in(0,+\infty)$
 \begin{equation}\label{estimatedimteo5}
\|u_{h}(t)\|_{L^{\infty}(M)}\,\le\,c\,t^{-\frac{N}{N[m(p-1)-1]+p}}\|u_{0,h}\|_{L^1(M)}^{\frac{p}{N[m(p-1)-1]+p}}\,;
\end{equation}
\begin{equation}\label{estimatedimteo5b}
\begin{aligned}
&\|u_{h}(t)\|_{L^1(M)}\le \|u_{0,h}\|_{L^1(M)}\,;\\
&\|u_{h}(t)\|_{L^{\infty}(M)}\,\le\,c\,t^{-\frac{N}{N[m(p-1)-1]+p s}}\|u_{0,h}\|_{L^s(M)}^{\frac{ps}{N[m(p-1)-1]+p s}}\,;\\
&\int_0^T\int_{M} \,(u_{h})^{q+m}\,d\mu dt\le C\,\|u_{0,h}\|_{L^{1+m}(M)}\|u_{0,h}\|_{L^s(M)}^{\frac{ps(q-1)}{N[m(p-1)-1]+ps}}\,.
\end{aligned}
\end{equation}
Moreover, again by monotone convergence theorem, it is possible to compute the limit as $R\to\infty$ in the integrals of equality \eqref{estimatedimteo4b} and obtain that
for every $\varphi \in C_c^{\infty}(M\times[0,T])$ such that $\varphi(x,T)=0$ for every $x\in M$, the function $u_{h}$ satisfies the equality:
\begin{equation}\label{estimatedimteo6}
\begin{aligned}
-\int_0^T\int_{M} \,u_{h}\,\varphi_t\,d\mu\,dt +& \int_0^T\int_{M}  |\nabla (u_{h})^m|^{p-2}\left \langle \nabla (u_{h})^m, \nabla \varphi \right \rangle \,d\mu\,dt\,\\
&= \int_0^T\int_{M} \,(u_{h})^q\,\varphi\,d\mu\,dt  +\int_{M} \,u_{0,h}(x)\,\varphi(x,0)\,d\mu\,.
\end{aligned}
\end{equation}
Observe that, arguing as above, due to inequalities \eqref{estimatedimteo5} and \eqref{estimatedimteo5b}, all the integrals
in \eqref{estimatedimteo6} are finite, therefore we get that $u_h$ is a solution to problem \eqref{problema}, where we replace $u_0$ with $u_{0,h}$, in the sense of Definition \ref{def21}.
Finally, let us observe that $\{u_{0,h}\}_{h\ge 0}$ has been chosen in such a way that
$$
u_{0,h}\longrightarrow u_0 \,\,\, \text{in}\,\, L^{s}(M)\cap L^1(M)\quad \textrm{ as }\, h\to +\infty.
$$
Moreover, $\{u_{h}\}_{h\ge 0}$ is a monotone increasing function in $h$ hence it has a limit as $h\to+\infty$. We call $u$ the limit function.
Hence we can pass to the limit as $h\to+\infty$ in \eqref{estimatedimteo5}, \eqref{estimatedimteo5b} and similarly to what we have seen above we get for all $t\in(0,+\infty)$
 \begin{equation*}\label{estimatedimteo7}
\|u(t)\|_{L^{\infty}(M)}\,\le\,c\,t^{-\frac{N}{N[m(p-1)-1]+p}}\|u_{0}\|_{L^1(M)}^{\frac{p}{N[m(p-1)-1]+p}}\,;
\end{equation*}
\begin{equation*}\label{estimatedimteo7b}
\begin{aligned}
&\|u(t)\|_{L^1(M)}\le \|u_{0}\|_{L^1(M)}\,;\\
&\|u(t)\|_{L^{\infty}(M)}\,\le\,c\,t^{-\frac{N}{N[m(p-1)-1]+p s}}\|u_{0}\|_{L^s(M)}^{\frac{ps}{N[m(p-1)-1]+p s}}\,;\\
&\int_0^T\int_{M} \,u^{q+m}\,d\mu dt\le C\,\|u_{0}\|_{L^{1+m}(M)}\|u_{0}\|_{L^s(M)}^{\frac{ps(q-1)}{N[m(p-1)-1]+ps}}\,.
\end{aligned}
\end{equation*}
Moreover, again by monotone convergence, it is possible to compute the limit as $h\to+\infty$ in the integrals of equality \eqref{estimatedimteo6}. Hence we obtain that for every $\varphi \in C_c^{\infty}(M\times[0,T])$ such that $\varphi(x,T)=0$ for every $x\in M$, the function $u$ satisfies the equality:
\begin{equation*}\label{estimatedimteo8}
\begin{aligned}
-\int_0^T\int_{M} \,u\,\varphi_t\,d\mu\,dt +& \int_0^T\int_{M}  |\nabla (u)^m|^{p-2}\left \langle \nabla (u)^m, \nabla \varphi \right \rangle \,d\mu\,dt\,
\\&= \int_0^T\int_{M} \,u^q\,\varphi\,d\mu\,dt  +\int_{M} \,u_{0}(x)\,\varphi(x,0)\,d\mu\,.
\end{aligned}
\end{equation*}
Since all the integral appearing above are finite, we get that $u$ is a solution to \eqref{problema} by means of Definition \ref{def21}.

\end{proof}

\section{Proof of Theorem \ref{teo2}}\label{dim2}

\begin{lemma}\label{lemma51}
Assume $m(p-1)>1$, and $s>\max\left\{q_0,1\right\}$ with $q_0$ as in \eqref{q0}. Let $u$ be a solution to problem \eqref{eq111} with $u_0\in  L^{\infty}(B_R)$, $u_0\ge0$, such that
\begin{equation}\label{eq60}
\|u_0\|_{L^{s}(B_R)}\le\delta_1,
\end{equation}
for $\delta_1>0$ sufficiently small. Let $S(t)$ be as in \eqref{eq34} and $T$ as in \eqref{eq43}, then 
\begin{equation}\label{eq61}
T>1.
\end{equation}
\end{lemma}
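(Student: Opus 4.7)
The plan is to argue by contradiction: assume $T\le 1$ and derive a contradiction for $\delta_1$ small. By smoothness of the regularized solution (ensured by the elliptic regularization quoted after Definition \ref{def31}), the map $\tau \mapsto g(\tau):=\tau \|u(\tau)\|_{L^\infty(B_R)}^{q-1}$ is continuous, so the maximality of $T$ together with $S(\tau)\le 1$ on $(0,T)$ force $g(T)=1$.

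The first step is to derive a self-improving bound on $\Phi(t):= \sup_{0<\tau<t}\|u(\tau)\|_{L^s(B_R)}^s$. Testing \eqref{intbyparts} with $u^{s-1}$, using $T_k(u^q)\le u^q$, and discarding the nonnegative dissipation terms (whose positivity requires $m(p-1)>1$ so that the exponent $(m-1)(p-1)+s-2$ is handled by the standard reformulation $\nabla u^{(m(p-1)+s-1)/p}$) yields
\begin{equation*}
\Phi(t) \le \|u_0\|_{L^s(B_R)}^s + s\,\Phi(t)\int_0^t \|u(\sigma)\|_{L^\infty(B_R)}^{q-1}\,d\sigma \qquad \text{for every } t\in(0,T).
\end{equation*}
Since $S\le 1$ on $(0,T)$, Lemma \ref{lemma3} (applied with $r=s$) gives the pointwise control
\begin{equation*}
\|u(\sigma)\|_{L^\infty(B_R)}^{q-1}\le c\,\sigma^{-N(q-1)/D}\Phi(\sigma)^{p(q-1)/D},\qquad D:=N[m(p-1)-1]+ps.
\end{equation*}
The hypothesis $s>q_0$ is precisely the one ensuring $B:=1-N(q-1)/D>0$, so $\int_0^t \sigma^{-N(q-1)/D}\,d\sigma$ converges and one arrives at the closed inequality
\begin{equation*}
\Phi(t) \le \|u_0\|_{L^s(B_R)}^s + C\,\Phi(t)^{1+A}\, t^B \qquad\text{with}\qquad A:=\frac{p(q-1)}{D}.
\end{equation*}

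With this in hand, define $T_F:= \sup\{t\in(0,T]:\Phi(t)\le 2\|u_0\|_{L^s(B_R)}^s\}$, which is positive by continuity of $t\mapsto \|u(t)\|_{L^s(B_R)}$ and $\Phi(0^+)=\|u_0\|_{L^s(B_R)}^s$. If $T_F<T$, then by definition $\Phi(T_F)=2\|u_0\|_{L^s(B_R)}^s$ and substituting into the closed inequality gives $T_F^B\ge c\,\|u_0\|_{L^s(B_R)}^{-sA}$, hence $T_F>1$ and $T>T_F>1$ for $\delta_1$ small. If instead $T_F\ge T$, then $\Phi(T)\le 2\|u_0\|_{L^s(B_R)}^s$; combining Lemma \ref{lemma3} on the slab $(T/2,T)$ with $g(T)=1$ yields
\begin{equation*}
1 = g(T) \le c\,T^B\,\Phi(T)^A \le c\,T^B\,\bigl(2\|u_0\|_{L^s(B_R)}^s\bigr)^A,
\end{equation*}
so $T\ge c\,\|u_0\|_{L^s(B_R)}^{-sA/B}>1$ for $\delta_1$ small. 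In either case we contradict $T\le 1$.

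The main obstacle is that, unlike in the proof of Lemma \ref{lemma4}, no smallness hypothesis on $\|u_0\|_{L^{qN/p}}$ is available here, so one cannot directly invoke Lemma \ref{lemma41} or Lemma \ref{lemma71} to propagate the $L^s$-norm of the initial datum as $\|u(t)\|_{L^s(B_R)}\le \|u_0\|_{L^s(B_R)}$. The case distinction on $T_F$ replaces that monotonicity, and the whole mechanism crucially relies on $s>q_0$ so that the exponent $B$ is strictly positive and the dependence on $\delta_1$ is of \emph{negative} power, making small data force large $T$.
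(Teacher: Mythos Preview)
Your proof is correct and takes a genuinely different route from the paper. The paper's argument is shorter: it writes $1=S(T)$, applies Lemma~\ref{lemma3} with $r=s$, and then invokes Proposition~\ref{prop71} (with $s_0=s$) to replace $\sup_{\tau}\|u(\tau)\|_{L^s}$ by $\|u_0\|_{L^s}$, yielding $1\le c\,T^{B}\delta_1^{sA}$ directly. The point you correctly flag is that Proposition~\ref{prop71} (and the underlying Lemma~\ref{lemma71}) requires the extra smallness $\|u_0\|_{L^{qN/p}}<\varepsilon_1$, which is \emph{not} among the stated hypotheses of Lemma~\ref{lemma51}; the paper is tacitly borrowing this hypothesis from Lemma~\ref{lemma6}, where Lemma~\ref{lemma51} is actually used. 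Your $T_F$-bootstrap, adapted from the mechanism in Lemma~\ref{lemma4}, replaces that monotonicity input and proves the lemma exactly as stated, under the sole assumption $\|u_0\|_{L^s}\le\delta_1$. So your approach is more self-contained and slightly more general, while the paper's is quicker once one grants the $L^{qN/p}$ smallness that is anyway present downstream. One small remark: in your second case you use $g(T)=1$, whereas the paper uses $S(T)=1$; both are equivalent here by the continuity argument you sketch, and either choice feeds into Lemma~\ref{lemma3} in the same way.
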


\begin{proof}
By \eqref{eq34} and \eqref{eq61} one has
\begin{equation}\label{eq63}
1=S(T)=\sup_{0<t<T}\,t\|u(t)\|_{L^{\infty}(B_R)}^{q-1}.
\end{equation}
By Lemma \ref{lemma3} applied with $r=s>\max\left\{\frac Np(q-m(p-1)),1\right\}$, \eqref{eq63} gives
\begin{equation}\label{eq64}
\begin{aligned}
1=S(T)&\le \sup_{0<t<T}\,t\left\{c\,t^{-\frac{N}{N[m(p-1)-1]+p s}}\left[\sup_{\frac t4<\tau<t}\int_{B_R}u^s\,d\mu\right]^{\frac{p}{N[m(p-1)-1]+p s}}\right\}^{(q-1)}\\
&\le  \sup_{0<t<T}\, c\,t^{1-\frac{N(q-1)}{N[m(p-1)-1]+p s}}\left(\sup_{\frac t4<\tau<t}\left\|u(\tau)\right\|_{L^{s}(B_R)}^{\frac{s\,p(q-1)}{N[m(p-1)-1]+p s}}\right)\,.
\end{aligned}
\end{equation}
By applying Proposition \ref{prop71} to \eqref{eq64} and due to \eqref{eq60}, we get
\begin{equation*}\label{eq65}
\begin{aligned}
1=S(T)&\le \sup_{0<t<T}\, c\,t^{1-\frac{N(q-1)}{N[m(p-1)-1]+p s}}\left\|u_0\right\|_{L^{s}(B_R)}^{\frac{s\,p(q-1)}{N[m(p-1)-1]+p s}}\\
&\le \, c\,T^{1-\frac{N(q-1)}{N[m(p-1)-1]+p s}}\,\delta_1^{\frac{s\,p(q-1)}{N[m(p-1)-1]+p s}}\,.
\end{aligned}
\end{equation*}
The thesis follows for $\delta_1>0$ small enough.

\end{proof}

\begin{lemma}\label{lemma6}
Assume that $m(p-1)>1$ and $s>\max\left\{q_0,1\right\}$ with $q_0$ as in \eqref{q0}. Let $u$ be a solution to problem \eqref{eq111} with $u_0\in  L^{\infty}(B_R)$, $u_0\ge0$, such that
\begin{equation}\label{eq66}
\|u_0\|_{L^{s}(B_R)}\le\delta_1,\quad \|u_0\|_{L^{q\frac Np}(B_R)}\le\delta_1,
\end{equation}
for $\delta_1>0$ sufficiently small. Let $S(t)$ be as in \eqref{eq34}, then
\begin{equation}\label{eq62}
T:=\sup\{t\ge0:\,S(t)\le\,1\}=+\infty.
\end{equation}
\end{lemma}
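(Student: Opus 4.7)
The strategy is by contradiction: assume $T<+\infty$, so that by continuity $S(T)=1$, and derive a strict inequality $S(T)<1$ by exploiting both the smallness hypothesis \eqref{eq66} and the decay provided by Proposition \ref{prop71}. Since Lemma \ref{lemma51} already guarantees $T>1$, it is natural to split the supremum defining $S(T)$ into the short-time piece $\sup_{0<t\le 1}\,t\,\|u(t)\|_{L^\infty(B_R)}^{q-1}$ and the long-time piece $\sup_{1<t<T}\,t\,\|u(t)\|_{L^\infty(B_R)}^{q-1}$.

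For the short-time part, I would mimic the argument of Lemma \ref{lemma51}: apply Lemma \ref{lemma3} with $r=s$ (which is legal since $S(t)\le 1$ on $(0,T)$) and use Lemma \ref{lemma71} to bound the supremum of $\|u(\tau)\|_{L^s(B_R)}$ by $\|u_0\|_{L^s(B_R)}\le \delta_1$. This yields
\begin{equation*}
t\,\|u(t)\|_{L^\infty(B_R)}^{q-1}\,\le\, c\,t^{1-\frac{N(q-1)}{N[m(p-1)-1]+ps}}\,\delta_1^{\frac{sp(q-1)}{N[m(p-1)-1]+ps}},
\end{equation*}
and the assumption $s>q_0$ makes the exponent of $t$ strictly positive, so on $(0,1]$ the quantity is bounded by $c\,\delta_1^{\kappa_1}$ for some $\kappa_1>0$.

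For the long-time part this simple bound is insufficient because the exponent of $t$ is positive; here the key new ingredient is Proposition \ref{prop71}, which, under \eqref{eq66} and the assumption $m(p-1)>1$, provides genuine time decay $\|u(\tau)\|_{L^s(B_R)}\le C\,\tau^{-\gamma_s}\|u_0\|_{L^{s_0}(B_R)}^{\delta_s}$ for any $1<s_0\le s$. Inserting this bound (with $\tau\in(t/4,t)$ for $t>1$, where $\tau^{-\gamma_s}\lesssim t^{-\gamma_s}$) into Lemma \ref{lemma3} gives
\begin{equation*}
t\,\|u(t)\|_{L^\infty(B_R)}^{q-1}\,\le\, C\,t^{\,\theta}\,\delta_1^{\kappa_2},\qquad \theta\,=\,1-\frac{N(q-1)+\gamma_s\,p(q-1)}{N[m(p-1)-1]+ps},
\end{equation*}
for some $\kappa_2>0$. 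The crucial algebraic step, which I expect to be the main obstacle, is to choose $s_0\in(1,s]$ (and, if need be, $s$ itself) so as to ensure $\theta\le 0$: in that case the long-time contribution is bounded by $C\,\delta_1^{\kappa_2}$. This is where $m(p-1)>1$ matters, since it is exactly the threshold at which Proposition \ref{prop71} produces a nontrivial decay exponent $\gamma_s>0$; one checks that as $s_0\to 1^+$ and $s$ is (say) kept fixed, $\gamma_s$ approaches its maximal value and $\theta$ becomes negative, taking advantage of the room provided by $s>q_0$.

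Once both pieces are controlled, we conclude
\begin{equation*}
1\,=\,S(T)\,\le\, C\left(\delta_1^{\kappa_1}+\delta_1^{\kappa_2}\right),
\end{equation*}
and choosing $\delta_1$ small enough (so that the right-hand side is strictly less than $1$) yields the desired contradiction. Hence $T=+\infty$.
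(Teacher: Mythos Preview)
Your overall architecture is exactly right, and coincides with the paper's: assume $T<\infty$, use Lemma~\ref{lemma51} to know $T>1$, split $S(T)$ at $t=1$, and treat the short-time piece via Lemma~\ref{lemma3} with $r=s$ plus Lemma~\ref{lemma71}. That part is fine.

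The long-time piece, however, has a genuine gap. You apply Lemma~\ref{lemma3} at the \emph{same} level $r=s$ and then invoke Proposition~\ref{prop71} from some $s_0<s$ up to $s$, letting $s_0\to 1^+$ to maximize the decay exponent $\gamma_s$. But the resulting bound involves $\|u_0\|_{L^{s_0}(B_R)}$, and hypothesis~\eqref{eq66} gives you smallness only of $\|u_0\|_{L^s}$ and $\|u_0\|_{L^{qN/p}}$; there is no control on $\|u_0\|_{L^{s_0}}$ for $s_0$ near $1$ (and no interpolation can manufacture it, since $s_0$ lies \emph{below} both available exponents). So the contradiction $S(T)\le C\delta_1^{\kappa}$ does not follow from your argument. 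A secondary issue is that even granting the $L^{s_0}$ bound, the claim ``$\theta$ becomes negative'' as $s_0\to 1^+$ relies essentially on $q>m(p-1)+\tfrac{p}{N}$, which is precisely what this lemma is \emph{not} assuming.

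The paper fixes both problems by reversing the direction: for $t>1$ it applies Lemma~\ref{lemma3} with a \emph{larger} exponent $r>s$, and then uses Proposition~\ref{prop71} with $s_0=s$ to bound $\|u(\tau)\|_{L^r}$ in terms of $\|u_0\|_{L^s}\le\delta_1$. The total exponent of $t$ becomes
\[
\beta \;=\; 1-\frac{N(q-1)}{N(m(p-1)-1)+pr}-\frac{p(r-s)(q-1)}{(m(p-1)-1)\,[\,N(m(p-1)-1)+pr\,]},
\]
and since $q-1>m(p-1)-1$, one checks that $\beta<0$ once $r$ is taken large enough (no extra restriction on $q$ is needed). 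This yields $J_2\le C\,\delta_1^{\kappa_2}$ and the contradiction $1=S(T)<1$ for $\delta_1$ small.
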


\begin{proof}
By contradiction, assume that $$T<+\infty.$$ Then, by \eqref{eq62}, \eqref{eq34} and by Lemma \ref{lemma51} we can write,
\begin{equation}\label{eq67}
\begin{aligned}
1=S(T)&=\sup_{0<t<T}\,t\|u(t)\|_{L^{\infty}(B_R)}^{q-1}\\
&\le \sup_{0<t<1}\,t\|u(t)\|_{L^{\infty}(B_R)}^{q-1}+ \sup_{1<t<T}\,t\|u(t)\|_{L^{\infty}(B_R)}^{q-1}\\
&=:J_1+J_2\,.
\end{aligned}
\end{equation}
Now, by Lemma \ref{lemma3}, applied with $r=s$, and Lemma \ref{lemma71}, we can write
\begin{equation}\label{eq68}
\begin{aligned}
J_1&\le \,\sup_{0<t<1}\,t\left\{c\,t^{-\frac{N}{N(m(p-1)-1)+ps}}\left(\sup_{\frac t4<\tau<t}\int_{B_R}u^s\,d\mu\right)^{\frac{p}{N(m(p-1)-1)+ps}}\right\}^{(q-1)}\\
&\le  \,\sup_{0<t<1}\, c\,t^{1-\frac{N(q-1)}{N(m(p-1)-1)+ps}}\left\|u_0\right\|_{L^{s}(B_R)}^{\frac{ps(q-1)}{N(m(p-1)-1)+ps}}\,.
\end{aligned}
\end{equation}
On the other hand, for any $r>s$, by Lemma \ref{lemma3}, and Proposition \ref{prop71} with $s_0=s$, we get
\begin{equation}\label{eq69}
\begin{aligned}
J_2&\le \,\sup_{1<t<T}\,t\left\{c\,t^{-\frac{N}{N(m(p-1)-1)+pr}}\left(\sup_{\frac t4<\tau<t}\int_{B_R}u^r\,d\mu\right)^{\frac{p}{N(m(p-1)-1)+pr}}\right\}^{(q-1)}\\
&\le  \,\sup_{1<t<T}\, k\,t^{1-\frac{N(q-1)}{N(m(p-1)-1)+pr}}\sup_{\frac t4<\tau<t}\left\|u(\tau)\right\|_{L^{r}(B_R)}^{\frac{pr(q-1)}{N(m(p-1)-1)+pr}}\\
&\le  \,\sup_{1<t<T}\, c\,t^{1-\frac{N(q-1)}{N(m(p-1)-1)+pr}}\sup_{\frac t4<\tau<t}\left(C\tau^{-\frac{s}{m(p-1)-1}\left(\frac 1{s}-\frac 1r\right)}\left\|u_0\right\|_{L^{s}(B_R)}^{\frac{s}{r}}\right)^{\frac{pr(q-1)}{N(m(p-1)-1)+pr}}\\
&\le \,\sup_{1<t<T}\,\frac{C\, c}{4}\,t^{1-\frac{N(q-1)}{N(m(p-1)-1)+pr}-\frac{spr(q-1)}{(m(p-1)-1)[N(m(p-1)-1)+pr]}\left(\frac 1{s}-\frac 1r\right)}\left\|u_0\right\|_{L^{s}(B_R)}^{\frac{ps(q-1)}{N(m(p-1)-1)+pr}}\,.
\end{aligned}
\end{equation}
We substitute \eqref{eq68} and \eqref{eq69} into \eqref{eq67}, thus we get
\begin{equation}\label{eq610}
1=S(T)\le \sup_{0<t<1}\, c\,t^{\alpha}\left\|u_0\right\|_{L^{s}(B_R)}^{\frac{ps(q-1)}{N(m(p-1)-1)+ps}}+\sup_{1<t<T}\,\frac{C\, c}{4}\,t^{\beta}\left\|u_0\right\|_{L^{s}(B_R)}^{\frac{ps(q-1)}{N(m(p-1)-1)+pr}}\,,
\end{equation}
where we have defined
$$\begin{aligned}
\alpha&=1-\frac{N(q-1)}{N(m(p-1)-1)+ps}, \\
\beta&=1-\frac{N(q-1)}{N(m(p-1)-1)+pr}-\frac{spr(q-1)}{(m(p-1)-1)[N(m(p-1)-1)+pr]}\left(\frac 1{s}-\frac 1r\right)\,.\end{aligned}
$$
Since $s>\max\left\{\frac Np(q-m(p-1)),1\right\}$ and $r>s$,
$$
\alpha>0;\quad\text{and}\quad \beta<0\,.
$$
Therefore inequality \eqref{eq610}, by assumption \eqref{eq66}, reads
\begin{equation*}
1=S(T)< c\, \delta_1^{\frac{ps(q-1)}{N(m(p-1)-1)+ps}}\,+\,\frac{C\, c}{4}\delta_1^{\frac{ps(q-1)}{N(m(p-1)-1)+pr}}\,.
\end{equation*}
Provided that $\delta_1$ is sufficiently small, we get a contradiction, i.e. $1=S(T)<1$. 
\end{proof}

We conclude by proving the following 

\begin{proposition}\label{prop4}
Assume that $m(p-1)>1$ and $s>\max\left\{q_0,1\right\}$ with $q_0$ as in \eqref{q0} and $r>s$. Let $u$ be a solution to problem \eqref{eq111} with $u_0\in  L^{\infty}(B_R)$, $u_0\ge0$, such that
$$
\max\left\{\|u_0\|_{L^{s}(B_R)},\|u_0\|_{L^{q\frac Np}(B_R)}\right\}\le\varepsilon_1,
$$
 for some $\varepsilon_1=\varepsilon_1(q,m,p,N,s, \mathcal{S}_{p}, \mathcal{C}_p )$ sufficiently small. Then, for any $t\in(0,+\infty)$ and for some $\Gamma=\Gamma(q,m,p,N,r,s, \mathcal{S}_{p}, \mathcal{C}_p)>0$
\begin{equation}\label{eq800}
\|u(t)\|_{L^{\infty}(B_R)}\le \Gamma\, t^{-\frac{1}{m(p-1)-1}\left(1-\frac{ps}{N(m(p-1)-1)+pr}\right)}\,\|u_0\|_{L^{s}(B_R)}^{\frac{ps}{N(m(p-1)-1)+pr}}\,.
\end{equation}
\end{proposition}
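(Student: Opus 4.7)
The plan is to combine three ingredients already established in the paper: first, Lemma \ref{lemma6} which, under the smallness assumption on $\|u_0\|_{L^s(B_R)}$ and $\|u_0\|_{L^{qN/p}(B_R)}$, guarantees that $T=\sup\{t\ge0:S(t)\le 1\}=+\infty$; second, the local smoothing estimate of Lemma \ref{lemma3}, which is available at every $t>0$ precisely because $S(t)\le 1$ throughout; and third, the $L^{s_0}\!-\!L^r$ smoothing of Proposition \ref{prop71}, which converts the $L^r$ control appearing on the right-hand side of Lemma \ref{lemma3} into an $L^s$ control at the price of a time factor.

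Concretely, fix $r>s$. Since $S(t)\le 1$ for all $t>0$ by Lemma \ref{lemma6}, Lemma \ref{lemma3} applied with this choice of $r$ gives
\begin{equation*}
\|u(t)\|_{L^{\infty}(B_R)}\le c\,t^{-\frac{N}{N[m(p-1)-1]+pr}}\left[\sup_{t/4<\tau<t}\int_{B_R}u^r\,d\mu\right]^{\frac{p}{N[m(p-1)-1]+pr}}.
\end{equation*}
Next, I would invoke Proposition \ref{prop71} with $s_0=s$ (the hypotheses of that Proposition are exactly the smallness assumption $\|u_0\|_{L^{qN/p}(B_R)}\le\varepsilon_1$ being made here) to obtain, for $\tau>0$,
\begin{equation*}
\|u(\tau)\|_{L^r(B_R)}\le C\,\tau^{-\gamma_r}\,\|u_0\|_{L^s(B_R)}^{\delta_r},\qquad \gamma_r=\frac{s}{m(p-1)-1}\!\left(\frac{1}{s}-\frac{1}{r}\right),\ \delta_r=\frac{s}{r}.
\end{equation*}
Plugging this into the previous display and absorbing the constant coming from $\tau\ge t/4$ into $\Gamma$, it remains to verify that the resulting powers of $t$ and of $\|u_0\|_{L^s(B_R)}$ match those in \eqref{eq800}.

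For the exponent of $\|u_0\|_{L^s(B_R)}$ this is immediate:
\begin{equation*}
\delta_r\cdot\frac{pr}{N[m(p-1)-1]+pr}=\frac{s}{r}\cdot\frac{pr}{N[m(p-1)-1]+pr}=\frac{ps}{N[m(p-1)-1]+pr}.
\end{equation*}
For the exponent of $t$, a routine manipulation yields
\begin{equation*}
-\frac{N}{N[m(p-1)-1]+pr}-\gamma_r\cdot\frac{pr}{N[m(p-1)-1]+pr}=-\frac{1}{m(p-1)-1}\!\left(1-\frac{ps}{N[m(p-1)-1]+pr}\right),
\end{equation*}
which is exactly the exponent $-\beta_{r,s}$ appearing in the statement. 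The resulting constant $\Gamma$ depends on $q,m,p,N,r,s,\mathcal S_p,\mathcal C_p$ via $c$ (from Lemma \ref{lemma3}) and the constant from Proposition \ref{prop71}.

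I do not expect any genuine obstacle: the argument is purely an algebraic juxtaposition of Lemmas \ref{lemma3}, \ref{lemma6} and Proposition \ref{prop71}. The only point requiring care is the bookkeeping of the exponents and the verification that the smallness threshold $\varepsilon_1$ can be chosen uniformly in $r$ for the range one wishes to consider. Since $r$ is fixed before the argument begins and enters only through the constant $\Gamma$, the $\varepsilon_1$ provided by Lemma \ref{lemma6} and Proposition \ref{prop71} depends on $r$ in a harmless way. This concludes the proof plan.
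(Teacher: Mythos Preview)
Your proposal is correct and follows essentially the same approach as the paper: invoke Lemma \ref{lemma6} to ensure $S(t)\le 1$ for all $t>0$, then apply Lemma \ref{lemma3} with exponent $r$, and finally use Proposition \ref{prop71} with $s_0=s$ to convert the $L^r$ bound into an $L^s$ bound on $u_0$, checking the resulting exponents. The paper's proof is exactly this chain of inequalities followed by the same algebraic simplification of the time exponent.
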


\begin{proof}
Due to Lemma \ref{lemma6}, $$S(t)\le 1\quad \text{for all}\,\,\,t\in(0,+\infty].$$ Therefore, by Lemma \ref{lemma3} and Proposition \ref{prop71} applied with $s_0=s$, for any $r>s$, we get, for all $t\in (0,+\infty)$
\begin{equation*}
\begin{aligned}
\|u(t)\|_{L^{\infty}(B_R)}&\le \|u\|_{L^{\infty}\left(B_R\times\left(\frac t2,t\right)\right)}\,\\
&\le\, c\,t^{-\frac{N}{N(m(p-1)-1)+pr}}\left[\sup_{\frac t4<\tau<t}\|u(\tau)\|_{L^r(B_R)}^r\right]^{\frac{p}{N(m(p-1)-1)+pr}}\\
&\le\,\Gamma\,t^{-\frac{N}{N(m(p-1)-1)+pr}-\frac{s}{m(p-1)-1}\left(\frac 1{s}-\frac 1r\right)\frac{pr}{N(m(p-1)-1)+pr}}\|u_0\|_{L^{s}(B_R)}^{\frac{s}{r}\frac{pr}{N(m(p-1)-1)+pr}}\,.
\end{aligned}
\end{equation*}
The thesis follows simply observing that
\begin{equation*}
\begin{aligned}
-\frac{N}{N(m(p-1)-1)+pr}-&\frac{s}{m(p-1)-1}\left(\frac 1{s}-\frac 1r\right)\frac{pr}{N(m(p-1)-1)+pr}\\
&=-\frac 1{m(p-1)-1}\left(1-\frac{ps}{N(m(p-1)-1)+pr}\right)\,.\end{aligned}
\end{equation*}
\end{proof}

\begin{proof}[Proof of Theorem \ref{teo2}]
We proceed as in the proof of   Theorem \ref{teo1}. Let $\{u_{0,h}\}_{h\ge 0}$ be a sequence of functions such that
\begin{equation}\label{equ0h}
\begin{aligned}
&(a)\,\,u_{0,h}\in L^{\infty}(M)\cap C_c^{\infty}(M) \,\,\,\text{for all} \,\,h\ge 0, \\
&(b)\,\,u_{0,h}\ge 0 \,\,\,\text{for all} \,\,h\ge 0, \\
&(c)\,\,u_{0, h_1}\leq u_{0, h_2}\,\,\,\text{for any } h_1<h_2,  \\
&(d)\,\,u_{0,h}\longrightarrow u_0 \,\,\, \text{in}\,\, L^{s}(M)\cap L^{1+m}(M)\quad \textrm{ as }\, h\to +\infty\,.\\
\end{aligned}
\end{equation}
From standard results it follows that problem \eqref{5} has a solution $u_{h,k}^R$ in the sense of Definition \ref{def31} with $u_{0,h}$ as in \eqref{equ0h}; moreover, $u^R_{h,k}\in C\big([0, \infty); L^r(B_R)\big)$ for any $r>1$.
Due to  Propositions \ref{prop4} and \ref{prop71} and Lemmas \ref{lemma71} and \ref{lemma6}, the solution $u_{h,k}^R$ satisfies estimates \eqref{eq72}, \eqref{eq715} and \eqref{eq800} for any $t\in(0,+\infty)$, uniformly w.r.t. $R$, $k$ and $h$.
Thus, reasoning as for the proof of Theorem \ref{teo1}, we can pass to the limit as $R\to+\infty$, $k\to+\infty$ and $h\to+\infty$ and we obtain a solution $u$ to problem \eqref{problema}, which fulfills \eqref{eq801}.

\end{proof}

\section*{Acknowledgements} The first author is funded by the Deutsche Forschungsgemeinschaft (DFG, German Research Foundation) - SFB 1283/2 2021. 
The second and the third author have been  partially supported by the Gruppo Nazionale per l’Analisi Matematica, la Probabilità e le loro Applicazioni (GNAMPA) of the Istituto Nazionale di Alta Matematica (INdAM).

\section*{Data availability statement} We do not analyse or generate any datasets, because our work proceeds within a theoretical and mathematical approach. One can obtain the relevant materials from the references below.

\bigskip
\bigskip
\bigskip

\end{document}